\newtheorem{theorem}{Theorem}[section]
\newtheorem{lemma}[theorem]{Lemma}
\newtheorem{corollary}[theorem]{Corollary}
\newtheorem{proposition}[theorem]{Proposition}
\newtheorem{conj}[theorem]{Conjecture}
\theoremstyle{definition}
\newtheorem{definition}[theorem]{Definition}
\theoremstyle{remark}
\newcommand\R{\mathbb{R}}
\newcommand\Z{\mathbb{Z}}
\newcommand\T{\mathbb{T}}
\newcommand\N{\mathbb{N}}
\newcommand\op{\mathrm{op}}
\newcommand\hs{\mathfrak I_2}
\newcommand\tc{\mathfrak I_1}
\newcommand{\qtq}[1]{\quad\text{#1}\quad}
\newcommand\Schw{\mathcal{S}}
\newcommand\eps{\varepsilon}
\let\Re=\undefined\DeclareMathOperator{\Re}{Re}
\let\Im=\undefined\DeclareMathOperator{\Im}{Im}
\DeclareMathOperator{\tr}{tr}
\numberwithin{equation}{section}
\begin{document}

\title[On the well-posedness problem for DNLS]{On the well-posedness problem for the\\derivative nonlinear Schr\"odinger equation}

\author{Rowan Killip}
\address{Department of Mathematics, University of California, Los Angeles, CA 90095, USA}
\email{killip@math.ucla.edu}

\author{Maria Ntekoume}
\address{Department of Mathematics, Rice University, Houston, TX 77005-1892, USA}
\email{maria.ntekoume@rice.edu}

\author{Monica Vi\c{s}an}
\address{Department of Mathematics, University of California, Los Angeles, CA 90095, USA}
\email{visan@math.ucla.edu}

\begin{abstract}
We consider the derivative nonlinear Schr\"odinger equation in one space dimension, posed both on the line and on the circle.  This model is known to be completely integrable and $L^2$-critical with respect to scaling.

The first question we discuss is whether ensembles of orbits with $L^2$-equicontinuous initial data remain equicontinuous under evolution.  We prove that this is true under the restriction $M(q)=\int |q|^2 < 4\pi$. We conjecture that this restriction is unnecessary.

Further, we prove that the problem is globally well-posed for initial data in $H^{1/6}$ under the same restriction on $M$.  Moreover, we show that this restriction would be removed by a successful resolution of our equicontinuity conjecture. 
\end{abstract}

\maketitle

\section{Introduction} \label{sec;introduction}

The derivative nonlinear Schr\"odinger equation
\begin{equation} \label{DNLS} \tag{DNLS}
    i q_t + q'' +i \left(|q|^2 q\right)'=0
\end{equation}
describes the evolution of a complex-valued field $q$ defined either on the line $\R$ or the circle $\T=\R/\Z$.  This equation was introduced as an effective model in magneto-hydrodynamics; see \cite{IchikawaWatanabe, MOMT, mjolhus_1976}.  It was soon shown to be completely integrable \cite{MR464963} and has received enduring attention since that time.

As we shall document more fully below, well-posedness questions for \eqref{DNLS}, particularly \emph{global} well-posedness, have been particularly stubborn.  Local well-posedness is already very challenging: the nonlinearity contains a full derivative, like KdV or mKdV, while the linear part gives only Schr\"odinger-like smoothing.

The task of converting local into global well-posedness is typically a matter of exploiting conservation laws.  As a completely integrable system, \eqref{DNLS} has an infinite family of conserved quantities.  The first three are as follows:
\begin{align}
M(q) &= \int |q(x)|^2\,dx \\
H(q)&=-\tfrac{1}{2}\int i (q \bar q'-\bar q q')+|q|^4 \, dx \\
H_2(q)&=\int |q'|^2 +\tfrac{3}{4} i |q|^2 (q \bar q'-\bar q q') + \tfrac 12 |q|^6\, dx.
\end{align}

The striking fact about \eqref{DNLS} is that, with the exception of $M(q)$, none of the Hamiltonians in the hierarchy are coercive.  Indeed, algebraic solitons have $M=4\pi$ but all other Hamiltonians are identically zero.  Applying the scaling symmetry
\begin{align}\label{E:scaling}
q(t,x) \mapsto \sqrt{\lambda}\, q(\lambda^2 t,\lambda x)
\end{align}
to an algebraic soliton yields a one-parameter family of solutions with identical values for \emph{all} the conserved quantities. However, this family is unbounded in $H^s$ for every $s>0$. 

The quantity $H(q)$ serves as the Hamiltonian for \eqref{DNLS} with respect to the Poisson structure
\begin{align}\label{Poisson}
    \{F,G\}=\int \tfrac{\delta F}{\delta q} (\tfrac{\delta G}{\delta \bar q})'+ \tfrac{\delta F}{\delta \bar q} (\tfrac{\delta G}{\delta q})' \,dx,
\end{align}
while $M(q)$ generates translations, albeit at speed 2.  Although the momentum is given by $\tfrac12 M(q)$, our definition of $M$ leads to a more seamless connection to the existing literature.   

Given that $M(q)$ is invariant under both \eqref{DNLS} and the scaling \eqref{E:scaling}, it is natural to ask whether or not \eqref{DNLS} is well-posed in $L^2$.  This is not known.  Indeed, the existing local well-posedness theory requires $H^s$ initial data with $s\geq \frac12$.  (We will make some further progress on this question in this paper.)  It is important to recognize that because $M(q)$ is scaling critical, the mere fact that it forms a coercive conservation law would not suffice to render local well-posedness in $L^2$ automatically global.  One must fear the solution concentrates at one (or more) points in space, a scenario known as type-II blowup.  We do not believe this happens:  

\begin{conj}\label{petit}
For any $Q\subseteq \Schw$ that is $L^2$-bounded and equicontinuous, the totality of states reached by \eqref{DNLS} orbits originating from $Q$, that is,
\begin{align}\label{E:Conj1}
Q_* = \{ e^{tJ\nabla H} q : q\in Q \text{ and } t\in\R\}
\end{align}
is also $L^2$-equicontinuous.
\end{conj}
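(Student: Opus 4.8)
The plan is to leverage the complete integrability of \eqref{DNLS} to manufacture a family of conserved quantities, indexed by a spectral parameter $\kappa$, that jointly monitor the high-frequency content of the field. The natural vehicle is the Lax operator of the Kaup--Newell hierarchy underlying \eqref{DNLS}: writing $L(\kappa)$ for the associated Zakharov--Shabat-type operator, one forms the perturbation determinant, equivalently the transmission coefficient $a(\kappa;q)$, and considers the regularized conserved quantities $\alpha(\kappa;q) = -\log|a(\kappa;q)|$. Because the scattering data evolve trivially (or linearly) under an integrable flow, each $\alpha(\kappa;\cdot)$ is preserved along \eqref{DNLS} orbits. The decisive structural fact I would exploit is that the large-$\kappa$ asymptotics of $\alpha(\kappa;q)$ encode the distribution of the $L^2$ mass of $q$ across frequencies: heuristically, $\alpha(\kappa;q)$ behaves like a smoothed count of the mass at frequencies comparable to $\kappa$, so that uniform smallness of $\alpha(\kappa;q)$ for large $\kappa$ is tantamount to $L^2$-equicontinuity.

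Carrying this out, I would proceed in three steps. First, verify the conservation of $\alpha(\kappa;\cdot)$ and its analyticity in $\kappa$ on the relevant spectral domain, together with the requisite a priori control of $a(\kappa;q)$ for $q$ ranging over the given bounded, equicontinuous family $Q$. Second, establish a two-sided comparison, \emph{uniform over $Q$}, between $\alpha(\kappa;q)$ and the high-frequency tail $\int_{|\xi|\gtrsim\kappa}|\widehat q(\xi)|^2\,d\xi$, with errors that vanish as $\kappa\to\infty$. Third, observe that $L^2$-equicontinuity of $Q$ at the initial time is precisely the statement $\sup_{q\in Q}\alpha(\kappa;q)\to 0$ as $\kappa\to\infty$; since each $\alpha(\kappa;\cdot)$ is conserved, this smallness is inherited by every state in $Q_*$ of \eqref{E:Conj1}, and the comparison from the second step upgrades it to $L^2$-equicontinuity of the full set $Q_*$. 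The analytic heart of the matter is the uniform comparison in the second step, which rests on $q\mapsto a(\kappa;q)$ being non-degenerate and, crucially, on $a(\kappa;q)$ staying bounded away from zero across the pertinent range of $\kappa$.

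The main obstacle, and exactly the reason Conjecture~\ref{petit} remains open in full generality, is the threshold $M(q)=4\pi$. When $M(q)<4\pi$, the Lax operator carries no bound states and $a(\kappa;q)$ exhibits no spectral singularities on the real axis; it is therefore uniformly bounded below, the comparison between $\alpha$ and the Sobolev tail is clean, and the scheme above goes through—this is precisely the regime in which equicontinuity is established. Once $M$ reaches $4\pi$, however, discrete eigenvalues appear and, more insidiously, the scattering data can approach a spectral singularity at the edge of the continuous spectrum. This degeneration is realized by the algebraic solitons, which carry $M=4\pi$, annihilate every higher Hamiltonian, and, under the scaling \eqref{E:scaling}, trace out a family that is $L^2$-bounded yet unbounded in $H^s$ for all $s>0$. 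At such a singularity $a(\kappa;q)$ acquires a zero, the two-sided comparison collapses, and the conserved $\alpha(\kappa;q)$ cease to control the tail uniformly. Surmounting this would require either a refined conserved quantity that retains coercivity through the soliton threshold, or a decomposition isolating the smooth, spatially localized soliton component from the radiation and controlling each with bounds that survive as the bound-state eigenvalue collides with the continuous spectrum; absent such an input, the argument secures the conclusion only below the algebraic-soliton threshold.
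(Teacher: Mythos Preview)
The statement is a conjecture; the paper does not prove it in full, only the restricted case $M(q)<4\pi$ (Theorem~\ref{T:equi}). You correctly recognize this and outline a strategy for the sub-threshold regime that is broadly aligned with the paper's: use the perturbation determinant $a(\kappa;q)$ as a conserved quantity and extract from it control of the high-frequency $L^2$-tail.

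However, your technical account of both the argument and the obstruction diverges from the paper in ways worth flagging. First, you take $\alpha(\kappa;q)=-\log|a(\kappa;q)|$, i.e., the modulus; the paper instead works with the full complex logarithm, and it is the \emph{imaginary} part of $\tr(i\kappa\Lambda\Gamma)$ (equivalently, the argument of $a$) that equals $\tfrac12\bigl[M(q)-\beta^{[2]}(\kappa;q)\bigr]$ and carries the tail information. Second, and more substantively, the paper's mechanism for the $4\pi$ barrier is not the appearance of bound states or zeros of $a$: it is a phase-unwrapping problem. One compares $a(\kappa;q)$ with $\exp\{\tr(i\kappa\Lambda\Gamma)\}$ via a determinant estimate (Lemma~\ref{det 2 ish}) whose error is \emph{summable} over dyadic $\kappa$; recovering $\Im\tr$ from its exponential then requires $\Im\tr$ to remain in a strip of width less than $2\pi$, and since $\Im\tr\approx\tfrac12 M(q)$ this forces $M(q)<4\pi$ (Lemma~\ref{L:unwrap}). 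Your assertion that ``when $M(q)<4\pi$, the Lax operator carries no bound states'' is not correct for \eqref{DNLS}---the paper notes that solitons exist with arbitrarily small $L^2$ norm---so the scattering-theoretic narrative you give for the threshold does not match the actual obstruction.
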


Here $\Schw$ denotes Schwartz class in the line case and $C^\infty$ on the torus.  In the line case, recent works (discussed below) guarantee that all such initial data lead to global Schwartz solutions.  The analogous claim is unknown on the torus, though we believe it to be true. Nevertheless, one can still ask if equicontinuity holds for as long as the orbits do exist.  By the arguments presented in this paper, solutions cannot break down without losing equicontinuity.  Therefore, a positive resolution of the conjecture for such partial solutions would already guarantee that they are global and so settle the conjecture in its entirety; see Corollary~\ref{C:H1}.

We phrased the conjecture in terms of $\Schw$ initial data because it is a class that is dense in all relevant spaces.  It also serves to emphasize that the central question to be addressed is not inherently tied to low regularity. 

Equicontinuity in $L^2$ is most easily understood via Fourier transformation: it means that $|\hat q|^2$ forms a tight family of measures.  Notice that, in view of the uncertainty principle, concentration on the physical side must be accompanied by a loss of tightness on the Fourier side.

In setting this conjecture, we have in mind four principal reasons: (1) It is challenging, yet recent developments give us hope for a successful resolution.  (2) It encapsulates a single essential obstacle, namely, understanding conservation laws for \eqref{DNLS}.  (3) A proof of this conjecture would have significant consequences for the well-posedness problem.  Indeed, such equicontinuity results form an essential part of a recent program developed in \cite{bringmann2019global,HGKV,KV18} that has proved successful in obtaining optimal well-posedness results for completely integrable PDE. (4) We are able to verify that it is true in the regime $M(q)<4\pi$; see Theorem~\ref{T:equi} below.

Given the nature of completely integrable systems, it is natural to imagine that an equicontinuity conjecture of the same form holds for all other PDE in the \eqref{DNLS} hierarchy.  Indeed, we truly believe that this is so and will shortly formulate just such a conjecture.  However, the particular claim that we believe will be of greatest use in understanding the hierarchy is best expressed through the perturbation determinant.  Let us turn our attention now to presenting this object, beginning with the requisite background.  

The Lax pair introduced by Kaup and Newell \cite{MR464963} for \eqref{DNLS} employs
\begin{align*}
   L_{\text{KN}}= 
   \begin{bmatrix}
   -i \lambda^2 -\partial & \lambda q\\
   -\lambda \bar q & i \lambda^2 -\partial
    \end{bmatrix}.
\end{align*}
For what follows, it will be convenient to make some cosmetic changes to this choice.  Specifically, we set $\lambda=e^{i\pi/4}\sqrt{\kappa}$ with $\kappa\geq 1$ and replace $e^{i\pi/4}q\mapsto q$.  This yields 
\begin{align*}
   L(\kappa) := \begin{bmatrix}  1 &0\\ 0 &-1\end{bmatrix}
   \begin{bmatrix}\kappa -\partial & \sqrt\kappa q\\
   i\sqrt\kappa \bar q & \kappa +\partial\end{bmatrix}
\ \ \ \text{and, for $q\equiv 0$,}\ \ \ %
    L_0(\kappa):=\begin{bmatrix} \kappa-\partial & 0\\ 0 & -(\kappa+\partial)
    \end{bmatrix}.
\end{align*}
These modifications maintain the crucial property that for smooth functions
\begin{align*}
\text{$q(t)$ solves \eqref{DNLS}} \iff \tfrac{d}{dt} L(t;\kappa) = \bigl[P(t;\kappa),\, L(t;\kappa)\bigr], 
\end{align*}
where 
\begin{align*}
P(\kappa)=\begin{bmatrix}
   2i \kappa^2-\kappa |q|^2 & 2i\kappa^{\frac32} q -\kappa^{\frac12} |q|^2 q +i\kappa^{\frac12} q'\\
   2\kappa^{\frac32} \bar q +i\kappa^{\frac12} |q|^2 \bar q -\kappa^{\frac12} \bar q' & -2i \kappa^2+\kappa |q|^2
    \end{bmatrix}.
\end{align*}
This guarantees that the Lax operators $L$ at different times are conjugate, at least formally.  This in turn suggests that the perturbation determinant $\det [ L_0^{-1}(\kappa) L(\kappa)] $ should be well-defined and conserved by the flow.

To make this precise, it is convenient for us to mimic the analysis of the AKNS-ZS system employed in \cite{KVZ17}:   Let us first define $(\kappa\pm\partial)^{-\frac 12}$ as the Fourier multipliers $(\kappa\pm i\xi)^{-\frac 12}$, where the complex square root is determined by $\sqrt{\kappa}>0$ and continuity.  We then define
\begin{align}\label{Lambda}
    \Lambda(q):=(\kappa-\partial)^{-\frac 12} q (\kappa+\partial)^{-\frac 12} \qtq{and}     \Gamma(q):=(\kappa+\partial)^{-\frac 12} \bar q (\kappa-\partial)^{-\frac 12},
\end{align}
which are Hilbert-Schmidt operators for $q\in L^2$; see Lemma~\ref{L:HS}.  Thus
\begin{align}\label{a defn}
a(\kappa;q) = \det\bigl[ 1 - i\kappa\Lambda\Gamma\bigr]
\end{align}
is well defined for $q\in L^2$ (and extends holomorphically to all $\Re \kappa>0$); moreover, for $q\in \Schw$ it agrees with the formal notion of the perturbation determinant.

While $a(\kappa)$ does encode all the Hamiltonians of the \eqref{DNLS} hierarchy, this is more easily seen through its logarithm,
\begin{align}\label{alpha defn}
\alpha(\kappa;q) := -\log[a(\kappa;q)]  = \sum_{\ell\geq 1} \tfrac{1}{\ell} \tr\left\{\left(i \kappa \Lambda \Gamma \right)^{\ell}\right\},
\end{align}
which serves as a generating function for these conservation laws.
Due to the possibility of $a(\kappa)$ vanishing, $\alpha(\kappa)$ may not be defined for all $\kappa\geq 1$.  Nevertheless, the series in \eqref{alpha defn} does converge for fixed $q\in L^2$ and $\kappa$ sufficiently large; see Proposition~\ref{P:large kappa}.

We have not yet addressed the conservation of $a(\kappa;q)$ under the \eqref{DNLS} flow.  In the line case, this could be effected by demonstrating that $a(\kappa;q)$ coincides with the reciprocal of the transmission coefficient and then appealing to the inverse scattering theory.  However, two direct proofs have appeared recently in the literature: Klaus and Schippa \cite{klaus2020priori} argued by differentiating the series (following a model introduced in \cite{KVZ17}), while Tang and Xu \cite{tang2020microscopic} developed a microscopic representation of this conservation law (in the style of \cite{HGKV}).  While these papers impose a small $M(q)$ requirement, this is solely to guarantee the convergence of the series \eqref{alpha defn}.  This issue is remedied by our Proposition~\ref{P:large kappa}.

To state the grand version of Conjecture~\ref{petit}, covering a wide range of commuting flows, let us first introduce a replacement for the set $Q_*$ defined in \eqref{E:Conj1}.
Given $q\in \Schw$, we first define
\begin{align}\label{Cq}
C_q= \{ \tilde q\in \Schw: a(\kappa;\tilde q) = a(\kappa; q) \text{ for all }\kappa>0\}
\end{align}
and write $C_q^0$ for the connected component (in the $L^2$ topology) of $C_q$ containing $q$.   Finally, given a set $Q\subseteq \Schw$, we define
\begin{align}\label{D:Q*}
Q_{**}=\bigcup_{q\in Q} C_q^0.
\end{align}

\begin{conj}\label{grand}
If $Q\subseteq \Schw$ is $L^2$-bounded and equicontinuous, then so too is the set $Q_{**}$ defined in \eqref{D:Q*}.
\end{conj}

We have several motivations in choosing connected components when defining $Q_{**}$.  This formulation of the conjecture retains a vestige of the behavior of orbits, while emphasizing that this is a question about conservation laws and is ultimately independent of the well-posedness of any flow.  Note also that while the zero solution and the family of algebraic solitons all share $a(\kappa)\equiv 1$, they are not in the same connected component under the \eqref{DNLS} hierarchy.

Our most compelling evidence in favor of these two conjectures is that both hold in the regime where $M(q)<4\pi$:

\begin{theorem}\label{T:equi}
Let $Q\subseteq \Schw$ be an $L^2$-equicontinuous set satisfying
\begin{align}\label{mass bound}
\sup\bigl\{ \| q\|_{L^2}^2 : q \in Q \bigr\} < 4\pi.
\end{align}
Then the set $Q_{**}$ defined in \eqref{D:Q*} is $L^2$-bounded and equicontinuous.
\end{theorem}

The significance of $4\pi$ is this: it is the value of $M$ at which the polynomial conservation laws lose their efficacy.  It is also the value of $M$ for the algebraic soliton, which is maximal among all solitary wave solutions.  Unlike mass-critical NLS, \eqref{DNLS} admits solitons of arbitrarily small $L^2$ norm and consequently, there is no notion of a scattering threshold.

The proof of Theorem~\ref{T:equi}, which will be given in Section~\ref{sec;equicontinuity}, is both short and simple.  Indeed, the hypothesis \eqref{mass bound} even allows us to forgo the restriction to connected components.

It has been observed before that $\tr(i\kappa\Lambda\Gamma)$ may be used to understand how the $L^2$ norm of $q$ is distributed across frequencies (cf. Lemma~\ref{L:dominant}).  The key observation that allows us to reach all the way to $4\pi$ (as opposed to mere smallness cf. \cite{klaus2020priori, tang2020microscopic}) is the manner in which we handle the remainder, specifically, the observation that the remainder may be summed in $\kappa$ for any $q\in L^2$; see \eqref{det vs tr}.

While the $4\pi$ restriction is crucial to our proof of Theorem~\ref{T:equi}, it does not play any role in our subsequent analysis of the consequences of such equicontinuity.  For this reason, we introduce a general threshold $M_*$:

\begin{definition}\label{D:M_*}
Let $M_*$ denote the maximal constant so that for any $L^2$-equi\-continuous set $Q\subseteq \Schw$ satisfying
\begin{align}\label{M_*}
\sup\bigl\{ \| q\|_{L^2}^2 : q \in Q \bigr\} < M_*,
\end{align}
the set defined in \eqref{D:Q*} is $L^2$-equicontinuous.
\end{definition}

Evidently, Theorem~\ref{T:equi} shows that $M_*\geq 4\pi$ and we conjecture that $M_*=\infty$.  Our primary contribution to the well-posedness problem is low-regularity well-posedness below the $M_*$ threshold:

\begin{theorem}\label{T:well}
Fix $\frac16\leq s<\frac12$.  The \eqref{DNLS} evolution is globally well-posed, both on the line and on the circle, in the space
\begin{align}
B^s_{M_*} = \bigl\{ q\in H^s : \| q\|_{L^2}^2 < M_* \bigr\} 
\end{align}
endowed with the $H^s$ topology.
\end{theorem}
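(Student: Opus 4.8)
The plan is to prove Theorem~\ref{T:well} by the standard local-plus-a-priori-bound strategy, but with the a priori bound upgraded from a conservation-law estimate to the full strength of equicontinuity below the $M_*$ threshold. First I would establish \emph{local} well-posedness in $H^s$ for $\frac16\le s<\frac12$. Since the existing local theory is stated at $s\ge\frac12$, the key point here is to exploit the criticality: by the scaling \eqref{E:scaling}, the smallness needed for a contraction-mapping or $X^{s,b}$-type argument can be arranged by rescaling to small spatial scales, and the local time of existence should depend only on the $L^2$-norm together with an equicontinuity modulus rather than on the full $H^s$-norm. Concretely, I expect to prove a local result of the following shape: given $L^2$-bounds and an equicontinuity modulus, there is a uniform lifespan $T>0$ and continuous dependence on data in $H^s$; this is where the restriction $s\ge\frac16$ enters, presumably from balancing the derivative in the nonlinearity against Schr\"odinger smoothing in the relevant multilinear estimates.

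\textbf{From local to global via equicontinuity.} The heart of the argument is to show that the $H^s$-norm cannot blow up in finite time as long as $\|q\|_{L^2}^2<M_*$. The mass $M(q)$ is conserved, so the hypothesis $\|q_0\|_{L^2}^2<M_*$ persists for all time. The plan is then to run the smooth orbits (which exist globally on the line, and exist as long as they remain equicontinuous on the torus by the discussion following Conjecture~\ref{petit}) through the equicontinuity machinery: the definition of $M_*$ guarantees that an $L^2$-bounded, equicontinuous family of Schwartz data remains equicontinuous along the flow. Combined with conservation of $M$, equicontinuity of $|\hat q(t)|^2$ uniformly on a time interval is exactly what prevents frequency escape and hence prevents loss of the $H^s$-bound. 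I would make this quantitative by showing that the equicontinuity modulus together with the $H^s$-norm at time zero controls the $H^s$-norm at all later times, iterating the local theory on successive intervals of uniform length $T$.

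\textbf{Approximation and the passage to $L^2$ data.} Because the equicontinuity statement (Theorem~\ref{T:equi} and the definition of $M_*$) is phrased for Schwartz initial data, I would first carry out the global argument for $q_0\in\Schw$ with $\|q_0\|_{L^2}^2<M_*$, obtaining global smooth solutions with uniformly controlled $H^s$-norms and equicontinuity on every compact time interval. I would then approximate general $q_0\in B^s_{M_*}$ by Schwartz data $q_{0,n}\to q_0$ in $H^s$ (taking care that the approximants retain $\|q_{0,n}\|_{L^2}^2<M_*$ and share a common equicontinuity modulus, which follows since $H^s$-convergence for $s>0$ forces $L^2$-equicontinuity of the family). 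The uniform-in-$n$ bounds furnished by the equicontinuity step give compactness, and the local well-posedness theory gives continuous dependence, so the approximate solutions converge to a solution of \eqref{DNLS} in $C(\R;H^s)$ depending continuously on the data.

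\textbf{Main obstacle.} The crux is the local well-posedness at the critical-adjacent regularity $\frac16\le s<\frac12$ \emph{with a lifespan controlled only by $L^2$-data plus equicontinuity}, rather than by $\|q_0\|_{H^s}$. A naive local theory would give a lifespan shrinking as the $H^s$-norm grows, which is useless for globalization; the whole scheme succeeds only if equicontinuity can be converted into a uniform local lifespan. I expect this to require the sharp multilinear estimates that recover a full derivative from the nonlinearity, and it is precisely the balancing in these estimates that pins the threshold at $s=\frac16$. The equicontinuity input from Theorem~\ref{T:equi} is what closes the loop, turning a local-in-time solution theory into a global one throughout $B^s_{M_*}$.
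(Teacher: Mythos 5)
There is a genuine gap, and it sits at the very foundation of your plan: the local well-posedness step you propose cannot exist in the form you describe. Any contraction-mapping or $X^{s,b}$-type argument produces a data-to-solution map that is uniformly continuous (indeed analytic) on bounded sets, but for \eqref{DNLS} this is known to fail for every $s<\frac12$; the paper cites exactly this obstruction (\cite{MR1837253, MR1693278}) as the reason $s=\frac12$ is the endpoint of the perturbative theory. Rescaling does not rescue the scheme: the problem is $L^2$-critical, so \eqref{E:scaling} preserves $M(q)$, and the failure of uniform continuity is not a large-data phenomenon. Consequently the ``main obstacle'' you identify --- a uniform lifespan from $L^2$-size plus an equicontinuity modulus via sharp multilinear estimates --- is not merely hard; it is blocked by known ill-posedness, and your guess that $s\geq\frac16$ arises from balancing the derivative nonlinearity against Schr\"odinger smoothing is likewise not where the threshold comes from.

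The paper's actual route is non-perturbative: the method of commuting flows. One regularizes the Hamiltonian by $H_\kappa(q)=4\kappa\Re A(\kappa;q)$, built from the perturbation determinant, and proves that the \eqref{Hk} flow is globally well-posed in $L^2$ on equicontinuous sets (Proposition~\ref{P:Hk wp}, resting on the analyticity and Lipschitz bounds for $\frac{\delta\alpha}{\delta q}$ in Lemma~\ref{L:alpha properties}) and hence in $H^s$ (Corollary~\ref{C:Hk wellposed in Hs}, via Theorem~\ref{T:Hs conservation}). Since $H_\kappa$ Poisson commutes with $H$ (Lemma~\ref{L:alpha commutes}), one writes $q_n(t)=e^{tJ\nabla H_\kappa^{\rm diff}}e^{tJ\nabla H_\kappa}q_n(0)$ and shows the difference flow \eqref{Hk_diff} converges to the identity as $\kappa\to\infty$, uniformly on equicontinuous sets (Theorem~\ref{T:DNLS respects limits}); this is where $s\geq\frac16$ enters, through the embedding $H^{1/6}\hookrightarrow L^3$ needed to make sense of the cubic term $F^{[3]}$ pointwise in time in \eqref{quest}. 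The pieces of your proposal that do align with the paper --- conservation of $M$ below $M_*$, propagation of $H^s$ bounds and $H^s$-equicontinuity from $L^2$-equicontinuity (Theorem~\ref{T:Hs conservation}), and approximation of $B^s_{M_*}$ data by $\Schw$ data --- all appear, but they serve as inputs to the commuting-flow construction rather than as supplements to a perturbative local theory, which is the missing (and unobtainable) ingredient in your outline.
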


A natural prerequisite for proving this theorem is a priori $H^s$ bounds.  In Section~\ref{sec; Hs}, we show how such bounds follow from $L^2$-equicontinuity; see Theorem~\ref{T:Hs conservation}. 

To prove Theorem~\ref{T:well} we employ the method of commuting flows introduced in \cite{KV18}.  In that paper, the method was used to prove well-posedness of the Korteweg--de Vries equation.  It has also been adapted and extended to treat the well-posedness problem for other completely integrable PDE \cite{bringmann2019global,HGKV}, to prove symplectic non-squeezing \cite{ntekoume2019symplectic}, and to construct dynamics for KdV in thermal equilibrium \cite{MR4145790}. 

In contrast to those papers, we do not employ a change of unknown; this simplifies some of the analysis.  On the other hand, new difficulties attend the construction of regularized flows: Because they are rooted in $\alpha(\kappa;q)$, the regularized Hamiltonians $H_\kappa(q)$ cannot be defined throughout $B^0_{M_*}$ for any single value of $\kappa$.  Instead, we need to use an exhaustion by equicontinuous subsets. Ultimately, these problems originate in the $L^2$-criticality of the problem.  Nevertheless, we will be able to prove that the regularized flows admit a satisfactory notion of well-posedness all the way down to $L^2$!  The $s\geq\frac16$ restriction arises later when we show that the regularized flows converge to the full \eqref{DNLS} evolution.

At this moment we do not know whether $s=\frac16$ is sharp in either geometry or indeed, whether the threshold regularity will differ between the line and the circle.  Moreover, we do not know of any results (in either geometry) that would preclude well-posedness all the way down to the scaling critical space $L^2$.  On the other hand, the self-similar solutions constructed in \cite{MR4079021} (see also \cite{MR818186}) show that smooth solutions can break-down in a dramatic way if one permits mere weak-$L^2$ decay at spatial infinity.

The restriction $s<\tfrac12$ in Theorem~\ref{T:well} does not represent a meaningful breakdown of our methods.  However, treating larger values would require additional arguments.  This seems unwarranted given that a great deal is already known about $H^s$-solutions for $s\geq \frac12$, as we shall now discuss.

Local well-posedness in $H^s$ for $s>\frac32$ was proved by Tsutsumi and Fukuda \cite{MR621533,MR634894}.  This was extended to $s\geq \frac12$ by Takaoka \cite{MR1693278} for \eqref{DNLS} posed on the line and by Herr \cite{MR2219223} for the periodic problem.  The endpoint $s=\frac12$ is significant: for lesser $s$, the data-to-solution map can no longer be uniformly continuous on bounded sets; see \cite{MR1837253, MR1693278}.

Global well-posedness in $H^1(\R)$ for initial data satisfying $M(q)<2\pi$ was obtained by Hayashi and Ozawa \cite{MR1152001}.  This result was extended first to $s>\frac23$ and then to $s>\frac12$ by Colliander, Keel, Staffilani, Takaoka, and Tao \cite{MR1871414, MR1950826}, under the same $L^2$ restriction.  See \cite{MR2823664} for a refinement of these arguments to handle the endpoint case $s=\frac12$, as well as \cite{MR1836810} for earlier efforts in this direction.

Hayashi and Ozawa \cite{MR1152001} also proved that solutions with initial data in $\Schw$ remain in $\Schw$ for as long as they remain bounded in $H^1$.

In \cite{MR3393674}, Wu proved global well-posedness in $H^1(\R)$ for initial data satisfying $M(q)<4\pi$; see also his earlier work \cite{MR3198590} which first overcame the $2\pi$ barrier. An alternate variational proof was given in \cite{MR3668587}, which also constructed global solutions for highly modulated initial data of arbitrary $L^2$ size.  The result in \cite{MR3393674} was extended to the periodic setting in \cite{MR3377682}.  Finally, the argument in \cite{MR1950826} was further advanced in \cite{MR3583477, MR3680936} to treat the endpoint case $s=\frac12$ and $M(q)<4\pi$; see also \cite{MR2668514} for earlier work in the periodic setting.

We note that the results of this paper provide an alternate proof of the main results in \cite{MR3377682,MR3393674}; see Corollary~\ref{C:H1}.  In particular, Proposition~\ref{P:H1 via H2} shows that $H^1$ bounds follow from Theorem~\ref{T:equi}.

The well-posedness of \eqref{DNLS} has also been investigated in Fourier-Lebesgue spaces; \cite{Deng2, MR2181058, MR2390318}.  This allowed the authors to obtain a uniformly continuous data-to-solution map in spaces that are closer to the critical scaling; recall that this property breaks down in $H^s$ spaces when $s<\frac12$.  An almost sure global well-posedness result for randomized initial data was proved in \cite{MR2928851}.

As a completely integrable PDE, \eqref{DNLS} is also amenable to inverse scattering techniques.
Building on the pioneering work of Liu \cite{MR3706093}, global well-posedness and asymptotic analysis of soliton-free solutions in $H^{2,2}(\R)= \{f\in H^2(\R):\, x^2f\in L^2(\R)\}$ were addressed in \cite{MR3563476, MR3739932}.

Global well-posedness for \emph{all} $H^{2,2}(\R)$ initial data was proved by Jenkins, Liu, Perry, and Sulem in \cite{MR4149070}.  This work builds on the authors' prior successes in \cite{MR3913998}.  These authors also proved a soliton resolution result \cite{MR3858827} for generic data in $H^{2,2}(\R)$.  See also their excellent review article \cite{MR4042219}.

The inverse scattering approach was also applied by Pelinovsky and Shimabukuro \cite{MR3862117} to prove global well-posedness in $H^{1,1}(\R)\cap H^2(\R)$ for soliton-free solutions and then in joint work with Saalmann \cite{MR3702542} for data giving rise to finitely many solitons; see also \cite{saalmann2017global}.

Recent months have witnessed a surge of activity on the well-posedness problem for \eqref{DNLS}.  First among these is the paper \cite{klaus2020priori}, which showed a priori $H^s$ bounds, $0<s<\frac12$, for solutions with $M(q)$ small.  The smallness assumption allows them to guarantee that the series \eqref{alpha defn} converges rapidly for $\kappa$ large, and so the series be conflated with its first term.  The paper \cite{tang2020microscopic} presents a microscopic representation of the conservation of $\alpha(\kappa;q)$.  In \cite{bahouri2020global}, Bahouri and Perelman achieve the major breakthrough of proving that for \emph{every} initial datum in $H^{1/2}(\R)$, the orbit remains bounded in the same space (irrespective of the size of $M(q)$).  For the periodic \eqref{DNLS}, the paper \cite{isom2020growth} shows that for $s\geq 1$ and $M(q)$ small, the $H^s(\T)$ norm of solutions grows at most polynomially in time.

While these exciting results appeared too recently to affect what we do in this paper,  their novelty and insightfulness give us every hope that the conjectures presented herein may soon be resolved.

\subsection*{Acknowledgements} R. K. was supported by NSF grant DMS-1856755 and M.~V. by grant DMS-1763074.

\section{Preliminaries} \label{sec;preliminaries}

Our conventions for the Fourier transform are
\begin{align*}
\hat f(\xi) = \tfrac{1}{\sqrt{2\pi}} \int_\R e^{-i\xi x} f(x)\,dx  \qtq{so} f(x) = \tfrac{1}{\sqrt{2\pi}} \int_\R e^{i\xi x} \hat f(\xi)\,d\xi
\end{align*}
for functions on the line and
\begin{align*}
\hat f(\xi) = \int_0^1 e^{- i\xi x} f(x)\,dx \qtq{so} f(x) = \sum_{\xi\in 2\pi\Z} \hat f(\xi) e^{i\xi x}
\end{align*}
for functions on the torus $\T$. These definitions of the Fourier transform are unitary on $L^2$ and yield the Plancherel identities
\begin{align*}
    \|f\|_{L^2(\mathbb R)}=\|\hat f\|_{L^2(\mathbb R)} \qtq{and}   \|f\|_{L^2(\mathbb T)}=\sum_{\xi\in 2\pi \mathbb Z}|\hat f(\xi)|^2,
\end{align*}
as well as the following convolution identity on $\R$:
\begin{align*}
    \widehat {fg}= \tfrac{1}{\sqrt{2\pi}} \hat f \ast \hat g.
\end{align*}

We use the standard Littlewood--Paley decomposition of a function,
$$
q = \sum_{N\in 2^\N} q_N,
$$
based on a smooth partition of unity on the Fourier side.  Here $q_1$ denotes the projection onto frequencies $|\xi|\leq 1$; for $N\geq 2$, $q_N$ contains frequencies~$|\xi|\sim N$.

The fact that the operators $\Lambda$ and $\Gamma$ defined in \eqref{Lambda} are Hilbert--Schmidt was noticed already in \cite[Lemma~4.1]{KVZ17}:

\begin{lemma}\label{L:HS}
For $q\in L^2$ and $\kappa>0$ we have 
\begin{align}
    \|\Lambda\|_{\hs(\R)}^2=\|\Gamma\|_{\hs(\R)}^2&\approx  \int_{\mathbb R} \log(4+\tfrac{\xi^2}{\kappa^2}) \frac{|\hat q(\xi)|^2}{\sqrt{4\kappa^2 +\xi^2}}\,d\xi\lesssim \kappa^{-1}\|q\|_{L^2}^2,\label{HS real}\\
    \|\Lambda\|_{\hs(\T)}^2= \|\Gamma\|_{\hs(\T)}^2&\approx  \sum_{\xi\in 2\pi\mathbb Z} \log(4+\tfrac{\xi^2}{\kappa^2}) \frac{|\hat q(\xi)|^2}{\sqrt{4\kappa^2 +\xi^2}}\lesssim \kappa^{-1}\|q\|_{L^2}^2.\label{HS torus}
\end{align}
\end{lemma}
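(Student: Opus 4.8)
The plan is to read off the Hilbert--Schmidt norm directly from the integral kernel of $\Lambda$ expressed on the Fourier side, where the multipliers and the multiplication by $q$ interact transparently. Since the Fourier transform is unitary on $L^2$, I would work with the kernel $\tilde K(\eta,\xi)$ describing $\widehat{\Lambda f}(\eta)$ in terms of $\hat f$. Applying $(\kappa+\partial)^{-1/2}$ first (multiplication by $(\kappa+i\xi)^{-1/2}$), then $q$ (which on the Fourier side is the convolution $\widehat{fg}=\tfrac{1}{\sqrt{2\pi}}\hat f *\hat g$ recorded above), then $(\kappa-\partial)^{-1/2}$ (multiplication by $(\kappa-i\eta)^{-1/2}$), one finds on the line
\begin{align*}
\tilde K(\eta,\xi)=\tfrac{1}{\sqrt{2\pi}}\,(\kappa-i\eta)^{-1/2}\,\hat q(\eta-\xi)\,(\kappa+i\xi)^{-1/2}.
\end{align*}
Because $\|\Lambda\|_{\hs}^2=\iint|\tilde K(\eta,\xi)|^2\,d\eta\,d\xi$ and $|\kappa\mp i\eta|=\sqrt{\kappa^2+\eta^2}$, the substitution $\zeta=\eta-\xi$ turns this into $\tfrac{1}{2\pi}\int|\hat q(\zeta)|^2\,I(\zeta)\,d\zeta$, where
\begin{align*}
I(\zeta)=\int_\R\frac{d\xi}{\sqrt{\kappa^2+(\xi+\zeta)^2}\,\sqrt{\kappa^2+\xi^2}}
\end{align*}
is the autocorrelation of $\xi\mapsto(\kappa^2+\xi^2)^{-1/2}$. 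Thus everything reduces to a sharp two-sided estimate of $I(\zeta)$.

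The heart of the proof is the claim $I(\zeta)\approx \log(4+\zeta^2/\kappa^2)/\sqrt{4\kappa^2+\zeta^2}$. I would establish this by the rescaling $\xi=\kappa u$, which gives $I(\zeta)=\kappa^{-1}J(\zeta/\kappa)$ with $J(v)=\int_\R (1+(u+v)^2)^{-1/2}(1+u^2)^{-1/2}\,du$, an even function of $v$; it then suffices to show $J(v)\approx \log(4+v^2)/\sqrt{4+v^2}$. For $|v|\lesssim 1$ both sides are comparable to a positive constant (the function $J$ is continuous with $J(0)=\pi$), so I may assume $v\geq 2$. Splitting $\R$ into the two unit-scale concentration regions $\{|u|\leq v/2\}$ and $\{|u+v|\leq v/2\}$ together with the complementary tails, on each concentration region one factor is $\approx v$ while the other integrates to $\int_{|u|\leq v/2}(1+u^2)^{-1/2}\,du\approx\log v$ (an arcsinh), and the tails contribute $O(1/v)$. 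This yields $J(v)\approx (\log v)/v$, matching the target since $\log(4+v^2)/\sqrt{4+v^2}\approx (\log v)/v$ for large $v$. Capturing the logarithm on both sides --- the lower bound from the arcsinh growth on a single concentration region, the upper bound from controlling the two tails --- is the main technical point, though it is an elementary bump-convolution estimate.

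For the uniform bound $\lesssim\kappa^{-1}\|q\|_{L^2}^2$, the same scaling gives $\log(4+\zeta^2/\kappa^2)/\sqrt{4\kappa^2+\zeta^2}=\kappa^{-1}\,\log(4+w^2)/\sqrt{4+w^2}$ with $w=\zeta/\kappa$, and $w\mapsto\log(4+w^2)/\sqrt{4+w^2}$ is bounded on $\R$ (continuous and decaying at infinity); hence the weight is $\lesssim\kappa^{-1}$ uniformly and Plancherel finishes the estimate. The identity $\|\Lambda\|_{\hs}=\|\Gamma\|_{\hs}$ follows from the same computation for $\Gamma$: its Fourier kernel is $\tfrac{1}{\sqrt{2\pi}}(\kappa+i\eta)^{-1/2}\widehat{\bar q}(\eta-\xi)(\kappa-i\xi)^{-1/2}$, which produces the identical double integral after invoking $|\widehat{\bar q}(\zeta)|=|\hat q(-\zeta)|$, the substitution $\zeta\mapsto-\zeta$, and the evenness $I(-\zeta)=I(\zeta)$.

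The torus case \eqref{HS torus} is the discrete analogue: the Fourier transform is unitary from $L^2(\T)$ onto $\ell^2(2\pi\Z)$, the matrix entries of $\Lambda$ are $(\kappa-i\eta)^{-1/2}\hat q(\eta-\xi)(\kappa+i\xi)^{-1/2}$ for $\eta,\xi\in2\pi\Z$ (with no prefactor, since the Fourier coefficients of a product convolve), and $\|\Lambda\|_{\hs(\T)}^2=\sum_{\zeta\in2\pi\Z}|\hat q(\zeta)|^2\,S(\zeta)$ with $S(\zeta)=\sum_{\xi\in2\pi\Z}(\kappa^2+(\xi+\zeta)^2)^{-1/2}(\kappa^2+\xi^2)^{-1/2}$. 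The only new ingredient is to prove the estimate above for the sum $S(\zeta)$ in place of the integral $I(\zeta)$; for $\kappa\geq1$ (the range used throughout the paper) the summand varies slowly enough relative to the lattice spacing that $S(\zeta)$ is comparable to $I(\zeta)$ by a routine Riemann-sum comparison, so the weight and the uniform bound carry over verbatim.
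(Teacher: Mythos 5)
Your proposal is correct and follows essentially the same route as the paper: both compute the Hilbert--Schmidt norm on the Fourier side to reduce matters to a two-sided estimate of the autocorrelation integral $\int \bigl(\kappa^2+(\xi+\zeta)^2\bigr)^{-1/2}(\kappa^2+\xi^2)^{-1/2}\,d\xi$, which the paper likewise obtains by splitting the convolution variable into a near-peak region (the source of the logarithm) and a tail region, exactly as in your concentration-region decomposition of $J(v)$. Your explicit restriction to $\kappa\geq 1$ in the torus case is in fact the right reading of the lemma there (as $\kappa\to 0$ the $\xi=\eta=0$ term makes the lattice sum of size $\kappa^{-2}$, so the stated comparison and the $\kappa^{-1}$ bound require $\kappa\gtrsim 1$; the paper only ever invokes the lemma with $\kappa\geq 1$).
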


\begin{proof}
The estimate \eqref{HS real} follows from the computation
\begin{align*}
    \|\Lambda\|_{\hs(\R)}^2&= \tfrac{1}{2\pi} \int_{\mathbb R} |\hat q(\xi)|^2 \int_{\mathbb R} \tfrac{1}{\sqrt{\kappa^2 +\eta^2}\sqrt{\kappa^2 +(\eta+\xi)^2}}\,d\eta\,d\xi
    \approx  \int_{\mathbb R} \log(4+\tfrac{\xi^2}{\kappa^2}) \tfrac{|\hat q(\xi)|^2}{\sqrt{4\kappa^2 +\xi^2}}\,d\xi.
\end{align*}
To compute the above integral in $\eta$, one treats separately the regions $|\eta|\leq 2|\xi|$ and $|\eta|>2|\xi|$; the logarithm term arises only when considering the first region.

On the torus, similar arguments yield
\begin{align*}
    \|\Lambda\|_{\hs(\T)}^2&= \sum_{\xi\in 2\pi\mathbb Z} |\hat q(\xi)|^2 \sum_{\eta\in 2\pi\mathbb Z} \tfrac{1}{\sqrt{\kappa^2 +\eta^2}\sqrt{\kappa^2 +(\eta+\xi)^2}}\approx  \sum_{\xi\in 2\pi\mathbb Z} \log(4+\tfrac{\xi^2}{\kappa^2}) \tfrac{|\hat q(\xi)|^2}{\sqrt{4\kappa^2 +\xi^2}},
\end{align*}
which settles \eqref{HS torus}.
\end{proof}

These Hilbert--Schmidt bounds ensure that $i\kappa\Lambda\Gamma$ is trace class and thus that the determinant in \eqref{a defn} is well defined.  The trace of this operator will also be important and is easily evaluated:

\begin{lemma}\label{L:dominant}
Let $q\in L^2$ and $\kappa>0$. Then
\begin{alignat}{2}
 \tr(i\kappa \Lambda \Gamma)&= \int \tfrac{i\kappa |\hat q(\xi)|^2}{2\kappa-i\xi} \,d\xi \qquad&&\text{on $\R$},\label{trace real}\\
 \tr(i\kappa \Lambda \Gamma)&= \tfrac{1+e^{-\kappa}}{1-e^{-\kappa}} \!\sum_{\xi\in 2\pi\mathbb Z}\! \tfrac{i\kappa |\hat q(\xi)|^2}{2\kappa - i\xi}\qquad&&\text{on $\T$}\label{trace periodic}.
\end{alignat}
\end{lemma}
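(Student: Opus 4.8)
The plan is to reduce both identities to evaluating $\tr(\Lambda\Gamma)$, since $\tr(i\kappa\Lambda\Gamma)=i\kappa\tr(\Lambda\Gamma)$, and then to compute this trace by passing to the Fourier side, where $\Lambda$ and $\Gamma$ act through explicit integral (resp.\ matrix) kernels. With $\partial$ acting as multiplication by $i\xi$, the operator $\Lambda=(\kappa-\partial)^{-\frac12}q(\kappa+\partial)^{-\frac12}$ has Fourier kernel
\begin{align*}
\hat\Lambda(\xi,\eta)=\tfrac{1}{\sqrt{2\pi}}(\kappa-i\xi)^{-\frac12}\hat q(\xi-\eta)(\kappa+i\eta)^{-\frac12},
\end{align*}
the convolution constant $\tfrac{1}{\sqrt{2\pi}}$ coming from the stated convention on $\R$ (with no such constant on $\T$, where $\xi,\eta$ range over $2\pi\Z$). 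The kernel of $\Gamma$ is obtained by replacing $\hat q$ with $\widehat{\bar q}$ and interchanging the roles of the two multipliers.

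Next I would compute $\tr(\Lambda\Gamma)=\int\!\!\int\hat\Lambda(\xi,\eta)\hat\Gamma(\eta,\xi)\,d\eta\,d\xi$. The half-power multipliers recombine into $(\kappa-i\xi)^{-1}(\kappa+i\eta)^{-1}$, and since $\widehat{\bar q}(\eta-\xi)=\overline{\hat q(\xi-\eta)}$ the two copies of $q$ collapse to $|\hat q(\xi-\eta)|^2$, giving
\begin{align*}
\tr(\Lambda\Gamma)=\tfrac{1}{2\pi}\int\!\!\int\frac{|\hat q(\xi-\eta)|^2}{(\kappa-i\xi)(\kappa+i\eta)}\,d\eta\,d\xi .
\end{align*}
The absolute convergence of this double integral, which both legitimizes the kernel expression for the trace and justifies Fubini below, is precisely the finiteness established in Lemma~\ref{L:HS}. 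Substituting $\zeta=\xi-\eta$ and integrating in $\eta$ first then reduces everything to the single integral $\int[(\kappa-i(\eta+\zeta))(\kappa+i\eta)]^{-1}\,d\eta$.

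On the line I would evaluate this by residues: rewriting the integrand as $[(\eta+\zeta+i\kappa)(\eta-i\kappa)]^{-1}$, the only pole in the upper half-plane sits at $\eta=i\kappa$ with residue $(\zeta+2i\kappa)^{-1}$, so closing the contour upward yields $2\pi i/(\zeta+2i\kappa)=2\pi/(2\kappa-i\zeta)$. The factors of $2\pi$ cancel and, after restoring the $i\kappa$, this produces \eqref{trace real}. On the torus the same reduction leaves the inner sum $\sum_{\eta\in2\pi\Z}[(\kappa-i(\eta+\zeta))(\kappa+i\eta)]^{-1}$, which I would handle by the partial-fraction identity $\tfrac{1}{AB}=\tfrac{1}{A+B}\bigl(\tfrac1A+\tfrac1B\bigr)$ with $A+B=2\kappa-i\zeta$, combined with the summation formula $\sum_{\eta\in2\pi\Z}(\kappa\pm i\eta)^{-1}=\tfrac12\coth(\kappa/2)=\tfrac12\tfrac{1+e^{-\kappa}}{1-e^{-\kappa}}$, a specialization of $\sum_{n}(z+n)^{-1}=\pi\cot\pi z$. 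Because $\zeta\in2\pi\Z$, shifting the summation index shows the two partial-fraction pieces contribute equally, and the common prefactor $\tfrac{1+e^{-\kappa}}{1-e^{-\kappa}}$ emerges, yielding \eqref{trace periodic}.

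The computation is essentially mechanical, so the only points demanding real care are (i) justifying that the trace equals the off-diagonal double integral/sum of the kernels and that the change of variables is licit, both of which follow from the absolute convergence furnished by Lemma~\ref{L:HS}; and (ii) on the torus, the fact that the individual series $\sum(\kappa\pm i\eta)^{-1}$ converge only conditionally (each term is $O(\eta^{-1})$) even though the original summand is $O(\eta^{-2})$. I would address (ii) either by reading the partial-fraction pieces as symmetric principal-value sums, for which the cotangent formula is valid, or by evaluating the absolutely convergent summand directly via a Mittag--Leffler/residue expansion, thereby avoiding any splitting altogether. This conditional-convergence bookkeeping is the main, if modest, obstacle.
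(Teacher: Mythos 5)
Your proposal is correct and follows essentially the same route as the paper: computing the trace on the Fourier side, evaluating the inner $\eta$-integral by contour/partial-fraction methods on $\R$, and using the cotangent expansion \eqref{pfrac for cot} together with the $2\pi\Z$-shift invariance on $\T$. Your extra care about the conditionally convergent split series is handled in the paper by the same device you suggest, namely pairing terms so that each summand is $O(\eta^{-2})$ before invoking the cotangent identity.
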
  

\begin{proof}
To prove \eqref{trace real}, we simply compute the trace on the Fourier side:
\begin{align*}
\tr(i\kappa \Lambda \Gamma)&= \tfrac{i\kappa}{2\pi} \iint \tfrac{|\hat q(\xi)|^2}{(\eta-i\kappa)(\eta+\xi+i\kappa)}\,d\eta\,d\xi
	= \int \tfrac{i\kappa |\hat q(\xi)|^2}{2\kappa-i\xi} \,d\xi .
\end{align*}

In the circle setting, we use the partial fraction decomposition of the cotangent:
\begin{align}\label{pfrac for cot}
    \sum_{\eta\in 2\pi \mathbb Z}\bigl( \tfrac{1}{\kappa+i\eta} + \tfrac{1}{\kappa-i\eta} \bigr)= \coth (\tfrac{\kappa}{2})= \tfrac{1+e^{-\kappa}}{1-e^{-\kappa}}.
\end{align}
In this way, we find
\begin{align*}
    \tr(i\kappa \Lambda \Gamma)&= i \kappa \sum_{\xi\in 2\pi\mathbb Z} |\hat q(\xi)|^2 \tfrac{1}{\xi +2i\kappa} \sum_{\eta\in 2\pi\mathbb Z} \bigl( \tfrac{1}{\eta-i\kappa}- \tfrac{1}{\eta+\xi+i\kappa} \bigr)
    	=  \sum_{\xi\in 2\pi\mathbb Z}  \! \tfrac{i\kappa |\hat q(\xi)|^2}{2\kappa-i\xi}   \tfrac{1+e^{-\kappa}}{1-e^{-\kappa}}.
\end{align*}
Notice that the sum over $\eta$ simplifies to \eqref{pfrac for cot} because $\xi\in 2\pi\Z$.
\end{proof}

In Section~\ref{sec; gwp}, it will be convenient to express the next term in the series \eqref{alpha defn} as a paraproduct.  This is the role of the next lemma.

\begin{lemma}\label{L:sub dominant}
Let $q\in L^2$ and $\kappa>0$. Then
\begin{alignat}{2}\label{sub dom R}
 \tr\bigl( [ \Lambda \Gamma]^2\bigr)&= \int_\R \left(\tfrac{1}{2\kappa+\partial} \bar q \right)^2(4\kappa-\partial)\left(\tfrac{1}{2\kappa-\partial} q\right)^2 \,dx \qquad&&\text{on $\R$},\\
\label{sub dom T}
 \tr\bigl( [\Lambda \Gamma]^2\bigr)&=  \tfrac{1+e^{-\kappa}}{1-e^{-\kappa}}
 	\int_\T \left(\tfrac{1}{2\kappa+\partial} \bar q \right)^2(4\kappa-\partial)\left(\tfrac{1}{2\kappa-\partial} q\right)^2 \,dx\qquad&&\text{on $\T$}.
\end{alignat}
\end{lemma}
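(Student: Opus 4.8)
The plan is to reduce the statement to an explicit Fourier-side evaluation of the trace, in the same spirit as the proofs of Lemmas~\ref{L:HS} and \ref{L:dominant}. Since $\Lambda$ and $\Gamma$ are Hilbert--Schmidt by Lemma~\ref{L:HS}, the operator $[\Lambda\Gamma]^2$ is trace class and cyclicity of the trace is legitimate. Using the definitions \eqref{Lambda} to cancel the inner half-powers, I would first write
\[
\tr\bigl([\Lambda\Gamma]^2\bigr)=\tr\bigl\{A\,q\,B\,\bar q\,A\,q\,B\,\bar q\bigr\},\qquad A:=(\kappa-\partial)^{-1},\quad B:=(\kappa+\partial)^{-1}.
\]
On the Fourier side $A$ and $B$ act as the multipliers $(\kappa-i\xi)^{-1}$ and $(\kappa+i\xi)^{-1}$, while multiplication by $q$ (resp.\ $\bar q$) becomes convolution against $\hat q$ (resp.\ $\overline{\hat q(-\,\cdot\,)}$). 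Writing the trace as the resulting integral over four momenta $p_1,\dots,p_4$ (a sum on $\T$), the four factors of $\hat q$ carry the arguments $p_1-p_2$, $p_3-p_2$, $p_3-p_4$, $p_1-p_4$, whose alternating sum vanishes. I would therefore change variables to the three independent frequency shifts $\alpha,\beta,\gamma$ (so the fourth is $\delta:=\alpha-\beta+\gamma$, with $\alpha+\gamma=\beta+\delta$), leaving a single free momentum $t$ to be integrated, and assigning the resolvent factors $(\kappa-i(t+\alpha))^{-1}(\kappa-i(t+\beta))^{-1}(\kappa+it)^{-1}(\kappa+i(t+\beta-\gamma))^{-1}$.

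The heart of the matter is this free integration. On $\R$ the two $A$-factors have poles in the lower half-plane and the two $B$-factors in the upper half-plane; as the integrand decays like $|t|^{-4}$, I would close the contour upward and sum the two residues, obtaining $\tfrac{2\pi}{i(\beta-\gamma)}$ times the bracket $\tfrac{1}{(2\kappa-i\alpha)(2\kappa-i\beta)}-\tfrac{1}{(2\kappa-i\gamma)(2\kappa-i\delta)}$. The crucial elementary identity, which follows from $\alpha+\gamma=\beta+\delta$, is
\[
\frac{1}{(2\kappa-i\alpha)(2\kappa-i\beta)}-\frac{1}{(2\kappa-i\gamma)(2\kappa-i\delta)}
=\frac{i(\beta-\gamma)\bigl(4\kappa-i(\alpha+\gamma)\bigr)}{(2\kappa-i\alpha)(2\kappa-i\beta)(2\kappa-i\gamma)(2\kappa-i\delta)};
\]
the prefactor $i(\beta-\gamma)$ cancels the $\tfrac{1}{i(\beta-\gamma)}$ from the residues and collapses the free integral to $\tfrac{2\pi\,(4\kappa-i(\alpha+\gamma))}{(2\kappa-i\alpha)(2\kappa-i\beta)(2\kappa-i\gamma)(2\kappa-i\delta)}$. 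Inserting this and undoing Parseval, the factor $4\kappa-i(\alpha+\gamma)$ becomes the operator $4\kappa-\partial$ sitting between the two halves, each denominator $2\kappa-i\alpha$, $2\kappa-i\gamma$ recombines with $\hat q$ into $(2\kappa-\partial)^{-1}q$, and each $2\kappa-i\beta$, $2\kappa-i\delta$ recombines with $\overline{\hat q}$ into $(2\kappa+\partial)^{-1}\bar q$ (the sign flip being produced by the reflection inherent in conjugation). Recognizing the two non-conjugated factors as $\bigl(\tfrac{1}{2\kappa-\partial}q\bigr)^2$ and the conjugated ones as $\bigl(\tfrac{1}{2\kappa+\partial}\bar q\bigr)^2$ then yields exactly the paraproduct \eqref{sub dom R}.

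For the torus the same bookkeeping applies, except that the free momentum is summed over $2\pi\Z$. Here I would decompose the four-fold product into partial fractions in $t$ and sum each simple term by the cotangent identity \eqref{pfrac for cot}. Because every pole sits at height $\pm\kappa$ and all real shifts lie in $\pi\Z$ (the $\hat q$-arguments being in $2\pi\Z$), each term contributes the \emph{same} value $\coth(\kappa/2)$, up to the sign dictated by its half-plane. Since the four residues sum to zero (again by the $|t|^{-4}$ decay), the two potential factors of $\coth(\kappa/2)$ collapse to a single one multiplying precisely the residue combination already computed on the line. This produces the lone prefactor $\tfrac{1+e^{-\kappa}}{1-e^{-\kappa}}$ in \eqref{sub dom T}, exactly as in Lemma~\ref{L:dominant}.

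The main obstacle I anticipate is carrying out the free-variable residue (or lattice) computation together with the algebraic collapse that generates the single operator $4\kappa-\partial$, and—on the torus—verifying that only one factor of $\coth(\kappa/2)$ survives rather than two. Convergence and the legitimacy of the Fourier-side manipulations (Fubini, contour shifting) are guaranteed for $q\in L^2$ by the trace-class bound that follows from Lemma~\ref{L:HS}; if one prefers to sidestep these issues, one may first establish the identity for $q\in\Schw$, where every integral converges absolutely, and then extend to $q\in L^2$ by density, using continuity of both sides in the $L^2$ topology (the left via the Hilbert--Schmidt estimates, the right via the multilinear structure of the paraproduct).
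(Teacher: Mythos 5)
Your proposal is correct and follows essentially the same route as the paper: the paper likewise evaluates $\tr([\Lambda\Gamma]^2)$ on the Fourier side, handling the line case by an elementary contour integral and the torus case by a careful partial fraction decomposition combined with the cotangent identity \eqref{pfrac for cot}, and your collapsed kernel $\frac{4\kappa-i(\alpha+\gamma)}{(2\kappa-i\alpha)(2\kappa-i\beta)(2\kappa-i\gamma)(2\kappa-i\delta)}$ is exactly the identity displayed in the paper's proof (with $\beta=-\eta_2$, $\delta=-\eta_4$). One small correction: the pole shifts lie in $2\pi\Z$ --- as your parenthetical correctly indicates, though you wrote $\pi\Z$ --- and it is precisely this $2\pi$-periodicity that allows every shifted lattice sum to reduce to \eqref{pfrac for cot} with the single surviving factor of $\coth(\kappa/2)$.
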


\begin{proof}
The method is exactly that of the previous lemma, only the details change.  In the line case, we have a more complicated (but still elementary) contour integral.  In the circle case, one must verify that
\begin{align*}
\sum_{\xi\in2\pi\Z} &\frac{1}{(\kappa+i\xi)(\kappa-i[\xi+\eta_1])(\kappa+i[\xi+\eta_1+\eta_2])(\kappa-i[\xi+\eta_1+\eta_2+\eta_3])}  \\
&=\frac{1+e^{-\kappa}}{1-e^{-\kappa}} \cdot \frac{4\kappa-i(\eta_1+\eta_3)}{(2\kappa-i\eta_1)(2\kappa+i\eta_2)(2\kappa-i\eta_3)(2\kappa+i\eta_4)}.
\end{align*}
This follows from \eqref{pfrac for cot} via a careful partial fraction decompostion.
\end{proof}

Our next lemma records operator estimates for frequency localized potentials.

\begin{lemma}[Operator estimates]\label{L:op est}
Fix $q\in L^2$, $N\in 2^\N$, and $\kappa\geq 1$, and denote $\Lambda_N=\Lambda(q_N)$ and $\Gamma_N=\Gamma(q_N)$. Then
\begin{align}
    &\|\Lambda_N\|_{\hs}= \|\Gamma_N\|_{\hs} \approx \sqrt{\tfrac{1}{\kappa +N} \log\left(4+\tfrac{N^2}{\kappa^2}\right)} \|q_N\|_{L^2}, \label{hilbert_schmidt}\\
    &\|\Lambda_N\|_{\op}= \|\Gamma_N\|_{\op} \lesssim \min\left\{\tfrac{\sqrt{N}}{\kappa} , \sqrt{\tfrac{1}{\kappa +N} \log\left(4+\tfrac{N^2}{\kappa^2}\right)}\right\}  \|q_N\|_{L^2},\label{operator}\\
    &\sum_{N\leq N_0}\|\Lambda_N\|_{\op} \lesssim \kappa^{-1} \min\left\{\sqrt{N_0}, \sqrt{\kappa}\right\}\|q\|_{L^2}.\label{operator_sum}
\end{align}
\end{lemma}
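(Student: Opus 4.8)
The plan is to establish the three estimates in the order stated, since each builds on the previous one. The Hilbert--Schmidt bound \eqref{hilbert_schmidt} is essentially an immediate corollary of Lemma~\ref{L:HS}: I would simply apply \eqref{HS real} (respectively \eqref{HS torus}) with $q$ replaced by $q_N$. The Fourier support of $\widehat{q_N}$ is then confined to $|\xi|\sim N$ (and to $|\xi|\lesssim 1$ when $N=1$), and on this region the two weights appearing in the lemma are slowly varying, namely $\log(4+\xi^2/\kappa^2)\approx\log(4+N^2/\kappa^2)$ and $\sqrt{4\kappa^2+\xi^2}\approx\kappa+N$. Pulling these constants out of the integral (or sum) and invoking Plancherel produces \eqref{hilbert_schmidt}; the endpoint $N=1$ is covered because $\kappa\geq 1$ forces both weights to be $\approx 1$ and $\approx\kappa$ there. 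The equality of the $\Lambda_N$ and $\Gamma_N$ norms is inherited directly from Lemma~\ref{L:HS}.

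For the operator bound \eqref{operator} I would derive two competing estimates and retain the better one. First, trivially $\|\Lambda_N\|_{\op}\leq\|\Lambda_N\|_{\hs}$, which together with \eqref{hilbert_schmidt} yields the second entry of the minimum. For the first entry, I factor $\Lambda_N=(\kappa-\partial)^{-\frac12}\,q_N\,(\kappa+\partial)^{-\frac12}$ and bound each factor on $L^2$ separately: the Fourier multipliers $(\kappa\mp i\xi)^{-\frac12}$ have operator norm $\sup_\xi(\kappa^2+\xi^2)^{-\frac14}=\kappa^{-\frac12}$, while multiplication by $q_N$ has operator norm $\|q_N\|_{L^\infty}$. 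Bernstein's inequality (valid in either geometry) gives $\|q_N\|_{L^\infty}\lesssim\sqrt N\,\|q_N\|_{L^2}$, so the composition is controlled by $\kappa^{-\frac12}\cdot\sqrt N\|q_N\|_{L^2}\cdot\kappa^{-\frac12}=\tfrac{\sqrt N}{\kappa}\|q_N\|_{L^2}$. The equality with $\|\Gamma_N\|_{\op}$ again follows from the conjugation/reflection symmetry relating $\Lambda$ and $\Gamma$ already used in Lemma~\ref{L:HS}.

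Finally, for the summed bound \eqref{operator_sum} I would apply Cauchy--Schwarz across dyadic scales, using the near-orthogonality $\sum_N\|q_N\|_{L^2}^2\approx\|q\|_{L^2}^2$, to reduce matters to estimating $\sum_{N\leq N_0}a_N^2$, where $a_N$ abbreviates the minimum in \eqref{operator}. The point is to use whichever term of the minimum is smaller in each frequency range. For $N\lesssim\kappa$ one takes $a_N^2\lesssim N/\kappa^2$, whose dyadic sum is geometric and equals $\approx\min(N_0,\kappa)/\kappa^2$; for $\kappa<N\leq N_0$ one takes $a_N^2\lesssim N^{-1}\log(4+N^2/\kappa^2)$, which upon writing $N=2^j$ becomes the convergent series $\sum_j(j-\log_2\kappa)\,2^{-j}\approx\kappa^{-1}$. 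Combining the two ranges gives $\sum_{N\leq N_0}a_N^2\lesssim\kappa^{-2}\min(N_0,\kappa)$, whose square root is exactly $\kappa^{-1}\min(\sqrt{N_0},\sqrt\kappa)$.

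The only genuinely delicate point is this last summation, and specifically the necessity of exploiting \emph{both} terms of the minimum in \eqref{operator}. Were one to bound $\|\Lambda_N\|_{\op}$ by its Hilbert--Schmidt norm throughout, the low-frequency sum $\sum_{N\lesssim\kappa}(\kappa+N)^{-1}\log(4+N^2/\kappa^2)$ would carry a spurious factor $\approx\kappa^{-1}\log\min(N_0,\kappa)$, defeating the clean bound. The crude estimate $\sqrt N/\kappa$, which is sharper precisely in the regime $N\ll\kappa$, is exactly what removes this logarithm and recovers $\kappa^{-1}\sqrt{N_0}$. Matching the two regimes at $N\sim\kappa$ and verifying the geometric and logarithmic dyadic sums is where the care lies, though each computation is elementary.
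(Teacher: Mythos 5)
Your proposal is correct and follows essentially the same route as the paper: \eqref{hilbert_schmidt} directly from Lemma~\ref{L:HS}, \eqref{operator} by combining the trivial bound $\|\cdot\|_{\op}\leq\|\cdot\|_{\hs}$ with the factorization $\kappa^{-1/2}\cdot\|q_N\|_{L^\infty}\cdot\kappa^{-1/2}$ and Bernstein, and \eqref{operator_sum} by splitting at $N\sim\kappa$ and using the $\sqrt{N}/\kappa$ entry below $\kappa$ and the logarithmic entry above. The only deviation is cosmetic: you run Cauchy--Schwarz over dyadic scales and sum the squared minima, whereas the paper simply bounds $\|q_N\|_{L^2}\leq\|q\|_{L^2}$ and sums the operator norms termwise; both yield the same bound, and your closing observation about why the $\sqrt{N}/\kappa$ bound is needed at low frequencies to avoid a spurious logarithm is exactly the point implicit in the paper's case analysis.
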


\begin{proof}
The claim \eqref{hilbert_schmidt} follows immediately from Lemma~\ref{L:HS}.  

Using the Bernstein inequality, we estimate
\begin{align*}
\|\Lambda_N\|_{\op}\leq \|(\kappa-\partial)^{-\frac 12}\|_{\op} \|q_N\|_{\op}\|(\kappa+\partial)^{-\frac 12}\|_{\op}
\leq \tfrac{1}{\kappa} \|q_N\|_{L^\infty}\lesssim \tfrac{\sqrt{N}}{\kappa} \|q_N\|_{L^2}.
\end{align*}
Combining this with \eqref{hilbert_schmidt} yields \eqref{operator}.

The case $N_0\leq \kappa$ of \eqref{operator_sum} is clear. If $N_0>\kappa$, an application of \eqref{operator} yields
\begin{align*}
    \sum_{N\leq N_0}\|\Lambda_N\|_{\op}&\lesssim \sum_{N\leq \kappa} \tfrac{\sqrt{N}}{\kappa} \|q\|_{L^2} +\sum_{\kappa<N\leq N_0}\sqrt{\tfrac{1}{N} \log\left(4+\tfrac{N^2}{\kappa^2}\right)} \|q\|_{L^2} \lesssim \tfrac{\sqrt{\kappa}}{\kappa} \|q\|_{L^2},
\end{align*}
as desired.
\end{proof}

\begin{lemma} \label{L:H^s estimates}
For all $\kappa\geq 1$, we have
\begin{align}
    \|(\kappa+\partial)^{-1} f (\kappa-\partial)^{-1}\|_{\hs}&\lesssim \kappa^{-\frac 12}\|f\|_{H^{-1}}, \label{estimate 1}\\
    \|q (\kappa+\partial)^{-\frac34}\|_{\hs}&\lesssim \kappa^{-\frac 14}\|q\|_{L^2}, \label{estimate 2}\\
    \|(\kappa-\partial)^{-\frac14} q (\kappa+\partial)^{-\frac14}\|_{\op}&\lesssim\|q\|_{L^2}. \label{estimate 3}
\end{align}
\end{lemma}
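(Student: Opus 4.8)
The plan is to handle the two Hilbert--Schmidt bounds \eqref{estimate 1} and \eqref{estimate 2} by direct computation on the Fourier side, exactly in the spirit of the proof of Lemma~\ref{L:HS}, and to treat the operator bound \eqref{estimate 3} by a separate argument combining H\"older's inequality with the Sobolev embedding $H^{1/4}\hookrightarrow L^4$. The first two are essentially routine kernel computations; the genuine difficulty lies in \eqref{estimate 3}, since an operator norm cannot be read off from the size of a kernel and so a different mechanism is required.

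For \eqref{estimate 1} I would write the operator on the Fourier side: its integral kernel is $\tfrac{1}{\sqrt{2\pi}}(\kappa+i\xi)^{-1}\hat f(\xi-\eta)(\kappa-i\eta)^{-1}$, so that
\begin{align*}
\|(\kappa+\partial)^{-1} f (\kappa-\partial)^{-1}\|_{\hs}^2 = \tfrac{1}{2\pi}\iint \frac{|\hat f(\xi-\eta)|^2}{(\kappa^2+\xi^2)(\kappa^2+\eta^2)}\,d\xi\,d\eta .
\end{align*}
Freezing $\zeta=\xi-\eta$ and carrying out the remaining convolution of two Lorentzians (a one-line residue computation giving $\tfrac{2\pi}{\kappa(4\kappa^2+\zeta^2)}$) reduces the left side to $\kappa^{-1}\int (4\kappa^2+\zeta^2)^{-1}|\hat f(\zeta)|^2\,d\zeta$; since $\kappa\geq 1$ forces $4\kappa^2+\zeta^2\geq 1+\zeta^2$, this is $\lesssim \kappa^{-1}\|f\|_{H^{-1}}^2$. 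For \eqref{estimate 2} the kernel is $\tfrac{1}{\sqrt{2\pi}}\hat q(\xi-\eta)(\kappa+i\eta)^{-3/4}$, so integrating $|\hat q(\xi-\eta)|^2$ in $\xi$ returns $\|q\|_{L^2}^2$ and leaves the factor $\int(\kappa^2+\eta^2)^{-3/4}\,d\eta\approx\kappa^{-1/2}$ (the substitution $\eta=\kappa u$ makes the $u$-integral converge because $\tfrac32>1$), which yields \eqref{estimate 2}. On the torus the $\eta$-integrals become lattice sums: these are handled by the cotangent identity \eqref{pfrac for cot} just as in Lemma~\ref{L:dominant}, the factor $\coth(\kappa/2)$ being bounded for $\kappa\geq 1$, and by comparison of $\sum_{\eta\in2\pi\Z}(\kappa^2+\eta^2)^{-3/4}$ with its integral, again valid for $\kappa\geq1$.

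The main obstacle is \eqref{estimate 3}. Here I would argue by duality: testing against $f,h\in L^2$ and using that $[(\kappa-\partial)^{-1/4}]^*=(\kappa+\partial)^{-1/4}$,
\begin{align*}
\bigl\langle (\kappa-\partial)^{-1/4} q (\kappa+\partial)^{-1/4} f,\, h\bigr\rangle = \int q\, u\, \bar v\,dx, \qquad u:=(\kappa+\partial)^{-1/4}f, \quad v:=(\kappa+\partial)^{-1/4}h .
\end{align*}
H\"older's inequality bounds this by $\|q\|_{L^2}\|u\|_{L^4}\|v\|_{L^4}$, and the key point is that each $L^4$ factor is controlled by $\|f\|_{L^2}$, respectively $\|h\|_{L^2}$: by the Sobolev embedding $\dot H^{1/4}\hookrightarrow L^4$ (in one dimension $\tfrac14=\tfrac12-\tfrac14$) together with the pointwise symbol bound $|\xi|^{1/4}(\kappa^2+\xi^2)^{-1/8}\leq 1$, one gets $\|u\|_{L^4}\lesssim \||D|^{1/4}u\|_{L^2}\leq\|f\|_{L^2}$, and similarly for $v$. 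This produces $|\langle\cdots\rangle|\lesssim\|q\|_{L^2}\|f\|_{L^2}\|h\|_{L^2}$ and hence \eqref{estimate 3}, with a bound that is in fact independent of $\kappa$. The same scheme applies on $\T$, using the inhomogeneous embedding $H^{1/4}(\T)\hookrightarrow L^4(\T)$ and the variant $\langle\xi\rangle^{1/4}(\kappa^2+\xi^2)^{-1/8}\leq1$, valid precisely because $\kappa\geq1$.
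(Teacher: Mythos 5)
Your proofs of \eqref{estimate 1} and \eqref{estimate 2} match the paper's in substance: the paper also computes these Hilbert--Schmidt norms on the Fourier side, the only cosmetic differences being that for \eqref{estimate 1} it bounds the $\eta$-integral by splitting into the regions $|\eta|\leq 2|\xi|$ and $|\eta|>2|\xi|$ rather than evaluating the Lorentzian convolution exactly by residues, and for \eqref{estimate 2} it cites the standard identity $\|f(x)\,g(-i\partial)\|_{\hs}\lesssim\|f\|_{L^2}\|g\|_{L^2}$ from Simon's book instead of redoing the kernel computation; your sketch of the torus sums is at the same level of detail as the paper's one-line remark deferring to the argument of Lemma~\ref{L:HS}. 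The genuine divergence is in \eqref{estimate 3}. The paper factors $q=\sqrt{|q|}\cdot\tfrac{q}{\sqrt{|q|}}$ and applies Cwikel's theorem to each half, using that $\|(\kappa\pm i\xi)^{-1/4}\|_{L^4_{\text{weak}}}\lesssim 1$ uniformly in $\kappa$; you instead dualize, move both quarter-power resolvents onto the test functions via $[(\kappa-\partial)^{-1/4}]^*=(\kappa+\partial)^{-1/4}$ (justified, as you note, because the branch cut avoids the right half-plane, so conjugation commutes with the fractional power), and close with H\"older plus the Sobolev embedding $\dot H^{1/4}(\R)\hookrightarrow L^4(\R)$ (inhomogeneous on $\T$), together with the elementary symbol bounds $|\xi|^{1/4}(\kappa^2+\xi^2)^{-1/8}\leq 1$ and $\langle\xi\rangle^{1/4}(\kappa^2+\xi^2)^{-1/8}\leq 1$ for $\kappa\geq 1$. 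Your route is correct and strictly more elementary: it avoids the weak trace-ideal machinery entirely, works identically in both geometries, and makes the $\kappa$-uniformity transparent. What Cwikel buys the paper is a conclusion stronger than stated---membership of $(\kappa-\partial)^{-1/4}q(\kappa+\partial)^{-1/4}$ in a weak Schatten class rather than mere boundedness---but since the lemma is only ever invoked for the operator norm (in the series estimates of Lemma~\ref{L:alpha properties}), nothing downstream is lost by your argument. The single point worth making explicit on the line is that $u=(\kappa+\partial)^{-1/4}f$ lies in $L^2\cap\dot H^{1/4}$ (immediate, as the multiplier is bounded), so the homogeneous embedding applies.
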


\begin{proof}
We first turn to \eqref{estimate 1}. We will only consider here the line setting; in the periodic case, one can apply a similar argument to the one in the proof of Lemma~\ref{L:HS}. A straightforward computation yields
\begin{align*}
    \|(\kappa+\partial)^{-1} f (\kappa-\partial)^{-1}\|_{\hs}^2&= \tfrac{1}{2\pi}\iint \tfrac{|\hat f(\xi)|^2}{[\kappa^2+(\xi+\eta)^2] (\kappa^2+\eta^2)}\,d\eta \,d\xi.
\end{align*}
Considering separately the regions $|\eta|\leq 2|\xi|$ and $|\eta|>2|\xi|$ when integrating in $\eta$, we find
\begin{align*}
    \|(\kappa+\partial)^{-1} f (\kappa-\partial)^{-1}\|_{\hs}^2\lesssim \int\tfrac{ |\hat f(\xi)|^2 }{\kappa(\kappa^2+\xi^2)}\,d\xi\lesssim \kappa^{-1}\|f\|_{H^{-1}}^2.
\end{align*}

By direct computation (cf. \cite[Theorem~4.1]{MR2154153}), we have
$$
\|q (\kappa+\partial)^{-\frac34}\|_{\hs}\lesssim \|q\|_{L^2}\|(\kappa+i\xi)^{-\frac34}\|_{L^2_\xi}\lesssim  \kappa^{-\frac 14}\|q\|_{L^2},
$$
which settles \eqref{estimate 2}.

Similarly, by Cwikel's theorem (see \cite{MR473576} or \cite[Theorem~4.2]{MR2154153}), we find that 
\begin{align*}
\|(\kappa-\partial)^{-\frac14} q (\kappa+\partial)^{-\frac14}\|_{\op}&\leq \bigl\|(\kappa-\partial)^{-\frac14} \sqrt{|q|}\bigr\|_{\op}\bigl\|\tfrac{q}{\sqrt{|q|}} (\kappa+\partial)^{-\frac14}\bigr\|_{\op}\\
&\lesssim \bigl\|(\kappa\pm i\xi)^{-\frac14}\bigr\|_{L^4_{\text{weak}}}^2 \bigl\| \sqrt{|q|}\bigr\|_{L^4}^2\lesssim \|q\|_{L^2}. \qedhere
\end{align*}
\end{proof}

\begin{proposition} \label{P:large kappa}
Let $Q$ be a bounded and equicontinuous subset of $L^2$. Then
\begin{align}\label{equi'}
\lim_{\kappa\to \infty} \sup_{q\in Q} \sqrt{\kappa} \|\Lambda(q)\|_{\op}=0.
\end{align}
Moreover, there exists $\kappa_0\geq 1$ so that the series \eqref{alpha defn} converges uniformly for $\kappa\geq \kappa_0$ and $q\in Q$.
\end{proposition}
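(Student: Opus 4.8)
The plan is to establish \eqref{equi'} first and then deduce the uniform convergence of the series as a fairly direct consequence. Throughout, write $A:=\sup_{q\in Q}\|q\|_{L^2}<\infty$ and decompose $\Lambda=\Lambda(q)=\sum_N\Lambda_N$ according to the Littlewood--Paley pieces of $q$, so that $\|\Lambda\|_{\op}\le\sum_N\|\Lambda_N\|_{\op}$. Given $\eps>0$, equicontinuity of $Q$ furnishes a frequency $N_0$, independent of $q\in Q$, with $\sum_{N>N_0}\|q_N\|_{L^2}^2<\eps^2$. I would then restrict attention to $\kappa\ge N_0$ and split the sum over $N$ into the three regimes $N\le N_0$, $N_0<N\le\kappa$, and $N>\kappa$.

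In the first regime I would invoke \eqref{operator_sum} directly: it gives $\sum_{N\le N_0}\|\Lambda_N\|_{\op}\lesssim\kappa^{-1}\sqrt{N_0}\,A$, so after multiplying by $\sqrt\kappa$ this contribution is $\lesssim\sqrt{N_0/\kappa}\,A$, which tends to $0$ as $\kappa\to\infty$ for $N_0$ fixed. For the two high-frequency regimes the point is to play the two entries of the minimum in \eqref{operator} against one another. On $N_0<N\le\kappa$ I would use $\|\Lambda_N\|_{\op}\lesssim(\sqrt N/\kappa)\|q_N\|_{L^2}$, so that $\sqrt\kappa\,\|\Lambda_N\|_{\op}\lesssim\sqrt{N/\kappa}\,\|q_N\|_{L^2}$; Cauchy--Schwarz then yields a bound $\lesssim(\sum_{N\le\kappa}N/\kappa)^{1/2}\eps\lesssim\eps$, since the dyadic sum $\sum_{N\le\kappa}N$ is comparable to $\kappa$. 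On $N>\kappa$ I would instead use the logarithmic bound $\|\Lambda_N\|_{\op}\lesssim(\tfrac{1}{\kappa+N}\log(4+N^2/\kappa^2))^{1/2}\|q_N\|_{L^2}$; Cauchy--Schwarz now produces the factor $(\sum_{N>\kappa}\tfrac{\kappa}{\kappa+N}\log(4+N^2/\kappa^2))^{1/2}$, and this dyadic sum is $O(1)$ because $\tfrac{\kappa}{N}\log(N/\kappa)$ is summable over dyadic $N>\kappa$, so this contribution is again $\lesssim\eps$. Combining the three estimates gives $\limsup_{\kappa\to\infty}\sup_{q\in Q}\sqrt\kappa\,\|\Lambda\|_{\op}\lesssim\eps$, and letting $\eps\downarrow0$ yields \eqref{equi'}.

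For the convergence statement, set $T:=i\kappa\Lambda\Gamma$ and note that \eqref{equi'}, together with $\|\Gamma\|_{\op}=\|\Lambda\|_{\op}$, gives $\|T\|_{\op}\le\kappa\|\Lambda\|_{\op}^2=(\sqrt\kappa\,\|\Lambda\|_{\op})^2\to0$ uniformly on $Q$; in particular there is $\kappa_0\ge1$ with $\sup_{q\in Q}\|T\|_{\op}\le\tfrac12$ for all $\kappa\ge\kappa_0$. Since $\Lambda,\Gamma$ are Hilbert--Schmidt, I would control the $\ell$-th term for $\ell\ge2$ by the standard Schatten inequality $|\tr(T^\ell)|\le\|T^\ell\|_{\tc}\le\|T\|_{\op}^{\ell-2}\|T\|_{\hs}^2$, where $\|T\|_{\hs}^2\le\kappa^2\|\Lambda\|_{\op}^2\|\Gamma\|_{\hs}^2\lesssim A^2(\sqrt\kappa\,\|\Lambda\|_{\op})^2=A^2\|T\|_{\op}$ by Lemma~\ref{L:HS}. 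Hence $|\tr(T^\ell)|\lesssim A^2\|T\|_{\op}^{\ell-1}$, and the tail is bounded by a geometric series: for $\kappa\ge\kappa_0$ one has $\sum_{\ell\ge L}\tfrac1\ell|\tr(T^\ell)|\lesssim A^2\sum_{\ell\ge L}\|T\|_{\op}^{\ell-1}\le2A^2\,2^{-(L-1)}$, which tends to $0$ as $L\to\infty$ uniformly in $q\in Q$ and $\kappa\ge\kappa_0$. As the single term $\ell=1$ is bounded by $\|T\|_{\tc}\le\kappa\|\Lambda\|_{\hs}\|\Gamma\|_{\hs}\lesssim A^2$, the series \eqref{alpha defn} converges uniformly for $\kappa\ge\kappa_0$ and $q\in Q$.

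I expect the main obstacle to be the first part, and specifically the logarithm in the operator bound \eqref{operator}: a single application of Cauchy--Schwarz across all frequencies above $N_0$ would cost a factor $\sqrt{\log(\kappa/N_0)}$ and destroy the limit. The resolution, and the crux of the argument, is the three-region split that uses $\sqrt N/\kappa$ where $N\lesssim\kappa$ and the logarithmic bound only where $N\gtrsim\kappa$, so that each dyadic sum is $O(1)$ rather than logarithmically divergent. Once \eqref{equi'} is in hand, the second part is routine.
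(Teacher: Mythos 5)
Your proof is correct, and your second part is essentially the paper's argument: the paper likewise chooses $\kappa_0$ so that $\sqrt{\kappa}\,\|\Lambda(q)\|_{\op}\leq\frac12$ uniformly and then bounds $\|(i\kappa\Lambda\Gamma)^{\ell+1}\|_{\tc}\leq\kappa^{\ell+1}\|\Lambda\|_{\hs}^2\|\Lambda\|_{\op}^{2\ell}\lesssim 2^{-\ell}\|q\|_{L^2}^2$. For \eqref{equi'}, however, your decomposition is a genuine variant. The paper splits only once, at $\eta\kappa$ with $\eta>0$ a small parameter: the tail is lumped into a single operator and estimated by $\sqrt{\kappa}\,\|\Lambda(q_{>\eta\kappa})\|_{\op}\leq\sqrt{\kappa}\,\|\Lambda(q_{>\eta\kappa})\|_{\hs}\lesssim\|q_{>\eta\kappa}\|_{L^2}$ directly from Lemma~\ref{L:HS} (whose stated bound $\kappa^{-1}\|q\|_{L^2}^2$ has already absorbed the logarithm, so the log you identify as the main obstacle never surfaces), while the low frequencies give $\sqrt{\kappa}\sum_{N\leq\eta\kappa}\|\Lambda_N\|_{\op}\lesssim\sqrt{\eta}\,\|q\|_{L^2}$ by \eqref{operator_sum}; one then takes $\eta$ small first and $\kappa$ large second. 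Your three-region split at $N_0$ and $\kappa$, playing the two branches of the minimum in \eqref{operator} against each other with Cauchy--Schwarz over dyadic blocks, reaches the same conclusion at the cost of verifying that the two dyadic weight sums are $O(1)$ --- which you did correctly. What the paper's version buys is brevity: lumping the whole high-frequency tail into one Hilbert--Schmidt norm sidesteps the dyadic bookkeeping and the logarithm entirely.

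One small repair is needed in your series argument: the chain $\|T\|_{\hs}^2\lesssim A^2(\sqrt{\kappa}\,\|\Lambda\|_{\op})^2=A^2\|T\|_{\op}$ is wrong as written, because $(\sqrt{\kappa}\,\|\Lambda\|_{\op})^2=\kappa\|\Lambda\|_{\op}^2$ \emph{dominates} $\|T\|_{\op}$ rather than equals it, so you cannot conclude $|\tr(T^\ell)|\lesssim A^2\|T\|_{\op}^{\ell-1}$ from $|\tr(T^\ell)|\leq\|T\|_{\op}^{\ell-2}\|T\|_{\hs}^2$. The fix is immediate: set $m:=\kappa\|\Lambda\|_{\op}^2$, so that $\|T\|_{\op}\leq m$ and $\|T\|_{\hs}^2\lesssim A^2 m$, whence $|\tr(T^\ell)|\lesssim A^2 m^{\ell-1}$; by \eqref{equi'} you may choose $\kappa_0$ so that $m\leq\frac12$ uniformly on $Q$ for $\kappa\geq\kappa_0$, and your geometric tail estimate then goes through verbatim.
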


\begin{proof}
Fix $\varepsilon>0$ and let $\eta>0$ be a small parameter to be chosen later. Using \eqref{operator_sum} and Lemma~\ref{L:HS}, we get
\begin{align*}
    \sqrt{\kappa} \bigl\|\Lambda(q)\bigr\|_{\op}&
 \lesssim  \sqrt{\kappa} \, \bigl\|\Lambda(q_{>\eta \kappa})\bigr\|_{\op} + \sqrt{\kappa} \sum_{N\leq \eta\kappa} \|\Lambda_N(q)\|_{\op}
 \lesssim \|q_{>\eta \kappa} \|_{L^2} + \sqrt{\eta} \, \|q\|_{L^2}.
\end{align*}
Choosing $\eta$ small enough depending on the $L^2$ bound of $Q$, and then $\kappa$ sufficiently large depending on $\eta$ and the equicontinuity property of $Q$, we may ensure that
\begin{align*}
     \sqrt{\kappa} \, \|\Lambda(q)\|_{\op}<\varepsilon\quad\text{for all}\,\, q\in Q,
\end{align*}
which yields \eqref{equi'}.

To continue, we choose $\kappa_0$ sufficiently large so that for any $\kappa\geq \kappa_0$ we have $\sqrt{\kappa}\,\|\Lambda(q)\|_{\op}\leq\frac12$ uniformly for $q\in Q$.  Lemma~\ref{L:HS} then yields
\begin{align}\label{geometric decay}
\bigl\| ( i \kappa \Lambda \Gamma)^{\ell+1}\bigr\|_{\tc}  \leq \kappa^{\ell+1} \|\Lambda\|_{\hs}^2 \|\Lambda\|_{\op}^{2\ell}
	\lesssim 2^{-\ell} \|q\|_{L^2}^2 ,
\end{align}
uniformly for $\kappa\geq \kappa_0$ and $q\in Q$, which ensures convergence of the series \eqref{alpha defn}.
\end{proof}

As discussed in the introduction, this convergence result allows the arguments of \cite{klaus2020priori,tang2020microscopic} to be extended beyond the regime of small $L^2$ norm and so show that $\alpha(\kappa;q)$ is conserved under the \eqref{DNLS} flow, for $\kappa$ sufficiently large.  This conservation is inherited by $a(\kappa;q)$ for \emph{all} $\Re \kappa>0$ because this is a holomorphic function in this region.

\section{Equicontinuity in $L^2$}\label{sec;equicontinuity}

The goal of this section is to prove Theorem~\ref{T:equi}. We begin with a convenient notion of the momentum at high frequencies in each geometry:
\begin{align}\label{beta2 defn}
    \beta_{\R}^{[2]}(\kappa;q):=\int_{\mathbb R} \frac{\xi^2 |\hat q(\xi)|^2 }{4\kappa^2+\xi^2 }d\xi 
\qtq{and}
    \beta_{\T}^{[2]}(\kappa;q):= \sum_{\xi\in 2\pi\mathbb Z} \frac{\xi^2|\hat q(\xi)|^2}{4\kappa^2+\xi^2 }.
\end{align}
The curious notation is explained by the fact that these expressions coincide with the quadratic (in $q$) parts of the quantities in \eqref{beta defn}.  For our immediate purposes, however, the following relation with the formulas of Lemma~\ref{L:dominant} is more important:
\begin{equation}\label{beta2 vs tr}
\begin{alignedat}{2}
    \Im \tr(i\kappa \Lambda \Gamma) &= \tfrac12\bigl[M(q) - \beta_{\R}^{[2]} (\kappa;q)\bigr] &&\text{on}\,\,\mathbb R,\\
    \Im \tr(i\kappa \Lambda \Gamma) &= \tfrac12\,\tfrac{1+e^{-\kappa}}{1-e^{-\kappa}}\bigl[M(q) - \beta_{\T}^{[2]} (\kappa;q)\bigr]& \qquad &\text{on}\,\,\mathbb T.
\end{alignedat}
\end{equation}

Given an infinite subset $\mathcal K\subseteq 2^{\mathbb N}$, we then define a norm via
\begin{align}\label{K defn}
    \|q\|_{\mathcal K}^2&:=\|q\|_{L^2}^2 + \sum_{\kappa\in \mathcal K} \beta^{[2]}(\kappa;q).
\end{align}
This in turn leads to a very convenient formulation of equicontinuity:

\begin{lemma}\label{L:equi via K}
A set $Q\subseteq L^2$ is bounded and equicontinuous if and only if there exists an infinite set $\mathcal K\subseteq 2^{\mathbb N}$ so that $\sup_{q\in Q}\|q\|_{\mathcal K}<\infty$. 
\end{lemma}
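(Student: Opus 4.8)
The plan is to reinterpret $\sum_{\kappa\in\mathcal K}\beta^{[2]}(\kappa;q)$ as a single weighted norm of $\hat q$ and then read off both implications from the structure of the resulting weight. Recall that $Q\subseteq L^2$ is bounded and equicontinuous precisely when
\[
\sup_{q\in Q}\|q\|_{L^2}<\infty \qtq{and} \lim_{R\to\infty}\sup_{q\in Q}\int_{|\xi|\geq R}|\hat q(\xi)|^2\,d\xi=0,
\]
i.e. $L^2$-boundedness together with uniform tightness of $|\hat q|^2$ (with sums over $2\pi\Z$ on $\T$). By Tonelli,
\[
\|q\|_{\mathcal K}^2=\int W(\xi)\,|\hat q(\xi)|^2\,d\xi, \qquad W(\xi):=1+\sum_{\kappa\in\mathcal K}\tfrac{\xi^2}{4\kappa^2+\xi^2},
\]
so the lemma reduces to understanding $W$. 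Two features drive the proof: $W$ depends on $\xi$ only through $|\xi|$ and is nondecreasing in $|\xi|$ (each summand is), and, since $\mathcal K$ is infinite, $W(\xi)\to\infty$ as $|\xi|\to\infty$ because each summand tends to $1$.

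For the direction ($\Leftarrow$), boundedness is immediate from $W\geq 1$. For equicontinuity I would use monotonicity: writing $W(R)$ for the value of $W$ at frequency modulus $R$, on $\{|\xi|\geq R\}$ we have $W(\xi)\geq W(R)$, whence
\[
\int_{|\xi|\geq R}|\hat q|^2 \leq \frac{1}{W(R)}\int W|\hat q|^2 = \frac{\|q\|_{\mathcal K}^2}{W(R)}.
\]
Taking the supremum over $q\in Q$ and then $R\to\infty$, and using $W(R)\to\infty$, yields the desired tightness.

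For the direction ($\Rightarrow$), the task is to construct $\mathcal K$. The naive bound $\beta^{[2]}(\kappa;q)\leq\int_{|\xi|\geq\kappa}|\hat q|^2+\tfrac14\|q\|_{L^2}^2$ is useless because the low-frequency error term does not sum in $\kappa$. Instead I would exploit the lacunarity forced by $\mathcal K\subseteq 2^{\mathbb N}$: for fixed $\xi$, the tail $\sum_{\kappa\in\mathcal K,\ \kappa>|\xi|}\tfrac{\xi^2}{4\kappa^2}$ is a geometric series over distinct powers of two exceeding $|\xi|$, hence bounded by an absolute constant. This gives $W(\xi)\leq N(\xi)+O(1)$ with $N(\xi):=\#\{\kappa\in\mathcal K:\kappa\leq|\xi|\}$. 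Writing $N(\xi)=\sum_{\kappa\in\mathcal K}\mathbb 1_{\{|\xi|\geq\kappa\}}$ and integrating,
\[
\sum_{\kappa\in\mathcal K}\beta^{[2]}(\kappa;q)\leq \sum_{\kappa\in\mathcal K}\int_{|\xi|\geq\kappa}|\hat q|^2 + O\bigl(\|q\|_{L^2}^2\bigr).
\]
Equicontinuity now lets me choose dyadic $\kappa_1<\kappa_2<\cdots$ with $\sup_{q\in Q}\int_{|\xi|\geq\kappa_j}|\hat q|^2\leq 2^{-j}$; taking $\mathcal K=\{\kappa_j\}$ bounds the displayed sum uniformly over $Q$, and combining with $L^2$-boundedness gives $\sup_{q\in Q}\|q\|_{\mathcal K}<\infty$.

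I expect the main obstacle to be precisely the low-frequency bookkeeping in this forward direction: one must resist estimating $\beta^{[2]}$ term by term and instead sum first, so that the dyadic spacing of $\mathcal K$ turns the low-frequency contributions into a convergent geometric series, after which the rapid growth of the $\kappa_j$ (permitted by equicontinuity) tames the surviving counting-function term. The torus case is identical throughout, with integrals replaced by sums over $2\pi\Z$ and the $\xi=0$ mode contributing nothing to $\beta^{[2]}$.
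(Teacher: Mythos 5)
Your proof is correct and is essentially the paper's argument written out in full: your weight identity $W(\xi)\approx 1+N(\xi)$, with the dyadic spacing of $\mathcal K\subseteq 2^{\mathbb N}$ controlling the low-frequency contributions by a geometric series, is exactly the paper's displayed equivalence $\|q\|_{\mathcal K}^2 \approx \|q\|_{L^2}^2 + \sum_{N} \#\{\kappa\in \mathcal K: \kappa< N\}\,\|q_N\|_{L^2}^2$, which the paper declares ``immediately evident.'' Both directions (monotonicity of $W$ with $W\to\infty$ for sufficiency, and choosing $\kappa_j$ with tails $\leq 2^{-j}$ for necessity) are the intended, standard reading-off from that comparison.
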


\begin{proof}
This is immediately evident from the observation that 
\begin{align*}
    \|q\|_{\mathcal K}^2 \approx \|q\|_{L^2}^2 + \sum_{\kappa\in \mathcal K} \|q_{>\kappa}\|_{L^2}^2
    	&\approx \|q\|_{L^2}^2 + \sum_{N\in 2^{\mathbb N}} \#\{\kappa\in \mathcal K: \kappa< N\}\;\|q_N\|_{L^2}^2. \qedhere
\end{align*}
\end{proof}

Before beginning the proof of Theorem~\ref{T:equi}, we need two further preliminaries.  The first will allow us to pass from the determinant to the exponentiated trace, and  the second to take logarithms.

\begin{lemma}\label{det 2 ish}
Let $A\in \tc$.  Then
\begin{align}\label{E:det 2 ish}
\bigl| \det( 1 + A ) - \exp\{\tr(A)\} \bigr| \leq \tfrac12 \| A \|_{\hs}^2 \exp\bigl\{ \| A\|_{\tc} \bigr\}.
\end{align}
\end{lemma}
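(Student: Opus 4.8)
The plan is to diagonalize in the sense of Fredholm theory and reduce everything to the eigenvalues of $A$. Since $A\in\tc$, its nonzero eigenvalues $(\lambda_j)_j$, repeated according to algebraic multiplicity, satisfy $\sum_j|\lambda_j|<\infty$, and the Fredholm determinant admits the product representation $\det(1+A)=\prod_j(1+\lambda_j)$, while $\tr(A)=\sum_j\lambda_j$ and hence $\exp\{\tr(A)\}=\prod_j e^{\lambda_j}$. Thus the quantity to be estimated is $\bigl|\prod_j(1+\lambda_j)-\prod_j e^{\lambda_j}\bigr|$. The two arithmetic inputs I will need are the Weyl/Lidskii inequalities comparing eigenvalue moduli to singular values, namely $\sum_j|\lambda_j|\le\|A\|_{\tc}$ and $\sum_j|\lambda_j|^2\le\|A\|_{\hs}^2$.

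To compare the two products, I would first truncate to finitely many factors and use the telescoping identity
\begin{align*}
\prod_{j=1}^n(1+\lambda_j)-\prod_{j=1}^n e^{\lambda_j}=\sum_{k=1}^n\Bigl(\prod_{j<k}(1+\lambda_j)\Bigr)\bigl[(1+\lambda_k)-e^{\lambda_k}\bigr]\Bigl(\prod_{j>k}e^{\lambda_j}\Bigr).
\end{align*}
For the surviving factors I will use the crude bounds $|1+\lambda_j|\le e^{|\lambda_j|}$ and $|e^{\lambda_j}|=e^{\Re\lambda_j}\le e^{|\lambda_j|}$, together with the elementary Taylor estimate $|e^z-1-z|\le\tfrac12|z|^2 e^{|z|}$ applied to the bracketed term (this is precisely where the constant $\tfrac12$ originates). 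This yields
\begin{align*}
\Bigl|\prod_{j=1}^n(1+\lambda_j)-\prod_{j=1}^n e^{\lambda_j}\Bigr|\le\tfrac12\sum_{k=1}^n|\lambda_k|^2\,\exp\Bigl\{\sum_{j=1}^n|\lambda_j|\Bigr\}.
\end{align*}

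Letting $n\to\infty$—the products converge absolutely because $\sum_j|\lambda_j|<\infty$, and the right-hand bound is monotone and uniform in $n$—and then invoking the two Weyl inequalities gives exactly \eqref{E:det 2 ish}. The one point requiring genuine care rather than routine computation is the passage from the eigenvalue sums to the Schatten norms $\|A\|_{\hs}$ and $\|A\|_{\tc}$: I must cite Lidskii's theorem / the Weyl majorization that the eigenvalue moduli are dominated by the singular values in the relevant Schatten norms, and I must ensure the infinite-product representation of the Fredholm determinant is used correctly, including the convergence guaranteed precisely by $A\in\tc$. Everything else is elementary. An alternative to the telescoping step would be to expand both sides in the standard determinant series and match terms, or to integrate $\tfrac{d}{dt}\log\det(1+tA)$, but the eigenvalue telescoping is the cleanest route to the sharp constant and to the stated splitting of one factor in $\hs$ against the remainder controlled in $\tc$.
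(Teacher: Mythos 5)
Your proof is correct, and it reaches the bound by a genuinely different mechanism than the paper. Both arguments share the same skeleton: reduce to the nonzero eigenvalues $(\lambda_j)$ via Lidskii's theorem ($\tr A=\sum_j\lambda_j$ and $\det(1+A)=\prod_j(1+\lambda_j)$) and Weyl's majorization ($\sum_j|\lambda_j|\leq\|A\|_{\tc}$, $\sum_j|\lambda_j|^2\leq\|A\|_{\hs}^2$), which are exactly the two inputs the paper cites. From there the paper expands $\det(1+A)$ and $\exp\{\tr A\}$ as multilinear series in the eigenvalues, observes that the difference consists precisely of the $n$-tuples containing a repeated index, and counts those with a $\binom{n}{2}$ factor; you instead telescope the two infinite products and absorb the quadratic loss into the single scalar estimate $|e^z-1-z|\leq\tfrac12|z|^2e^{|z|}$. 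The two routes yield the identical intermediate bound $\tfrac12\bigl(\sum_j|\lambda_j|^2\bigr)\exp\bigl\{\sum_j|\lambda_j|\bigr\}$, constant included: in the paper the $\tfrac12$ comes from $\binom{n}{2}/n!\leq\tfrac12\cdot\tfrac{1}{(n-2)!}$, in yours from $1/n!\leq\tfrac12\cdot 1/(n-2)!$ inside the Taylor series --- the same inequality in disguise. Your version is somewhat more modular (it localizes the error in one factor at a time and would adapt directly to regularized determinants of $\det_2$ type), at the modest extra cost of justifying the product representation and the $n\to\infty$ passage, both of which you handle correctly via $\sum_j|\lambda_j|<\infty$ and the uniformity of the bound in $n$; the paper's series-matching avoids infinite products altogether by working inside a single absolutely convergent expansion.
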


\begin{proof}
Let $\lambda_i$ enumerate the non-zero eigenvalues of $A$ repeated according to algebraic multiplicity.  By relating eigenvalues and singular values, Weyl proved that
\begin{align*}
\sum |\lambda_i| \leq \| A \|_{\tc} \qtq{and} \sum |\lambda_i|^2 \leq \| A \|_{\hs}^2  .
\end{align*}

Now let us compare
\begin{align*}
\det( 1 + A ) &= 1 + \sum_{n=1}^\infty \; \frac{1}{n!} \sum_{\substack{i_1,\ldots,i_n\\\text{distinct}}} \lambda_{i_1}\lambda_{i_2}\cdots \lambda_{i_n} ,\\
\exp\{ \tr(A) \} &= 1 + \sum_{n=1}^\infty \; \frac{1}{n!} \sum_{i_1,\ldots,i_n} \lambda_{i_1}\lambda_{i_2}\cdots \lambda_{i_n}.
\end{align*}
Evidently, the difference contains only sums over $n$-tuples $(i_1,\ldots,i_n)$ that contain at least one pair of identical indices.  Thus,
\begin{align*}
\text{LHS\eqref{E:det 2 ish}} &\leq \sum_{n=2}^\infty \frac{1}{n!}  \binom{n}{2} \Bigl[ \sum_j |\lambda_j|^2\Bigr] \Bigl[ \sum_i |\lambda_i| \Bigr]^{n-2}
	\leq \tfrac12 \| A \|_{\hs}^2 \sum_{n=2}^\infty \frac{1}{(n-2)!} \|A\|_{\tc}^{n-2}
\end{align*}
and so \eqref{E:det 2 ish} follows.
\end{proof}

\begin{lemma}\label{L:unwrap}
Given $C>0$ and $0<\eps<\pi$, let
\begin{align}\label{A rectangle}
\mathcal R=\{z:|\Re z|\leq C \text{ and } 0<\Im z<2\pi-\eps\}.
\end{align}
Then 
\begin{align}\label{invert exp}
 \bigl|\Im(z-w)\big| \leq \frac{\pi e^{C}}{\sin(\eps/2)} \bigl| e^w - e^z \bigr| \qtq{uniformly for} z,w\in\mathcal R.
\end{align}
\end{lemma}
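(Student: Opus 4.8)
The plan is to use the fundamental-theorem-of-calculus representation of $e^w-e^z$ along the straight segment joining $z$ and $w$, which stays inside $\mathcal R$ by convexity. Writing $\zeta(t)=z+t(w-z)$ for $t\in[0,1]$, I have $e^w-e^z=(w-z)\int_0^1 e^{\zeta(t)}\,dt$. Since $|w-z|\geq|\Im(w-z)|=|\Im(z-w)|$, it suffices to establish the uniform lower bound
\[
\Bigl|\int_0^1 e^{\zeta(t)}\,dt\Bigr|\geq \frac{\sin(\eps/2)}{\pi}\,e^{-C},
\]
because then $|\Im(z-w)|\leq|w-z|=|e^w-e^z|\big/\bigl|\int_0^1 e^{\zeta}\,dt\bigr|$ is bounded by $\tfrac{\pi e^{C}}{\sin(\eps/2)}|e^w-e^z|$, which is exactly \eqref{invert exp}.

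To prove the lower bound I would first record the geometry of the segment: $\Re\zeta(t)$ is a convex combination of $\Re z,\Re w$, hence lies in $[-C,C]$, while $\Im\zeta(t)$ runs affinely over the interval with endpoints $\Im z,\Im w$, of length $L:=|\Im(z-w)|<2\pi-\eps$ (the case $L=0$ being trivial). Letting $c$ be the midpoint of this interval and substituting $\phi=\Im\zeta-c\in[-L/2,L/2]$, the affineness of both $\Re\zeta$ and $\Im\zeta$ in $t$ lets me factor out the unimodular constant $e^{ic}$ together with $e^{(\Re z+\Re w)/2}\geq e^{-C}$, reducing matters to $\int_{-L/2}^{L/2}e^{(p+i)\phi}\,d\phi$ for a single real slope $p$. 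This evaluates to $2\sinh\!\bigl((p+i)L/2\bigr)/(p+i)$, and using $|\sinh(\alpha+i\beta)|^2=\sinh^2\alpha+\sin^2\beta$ its modulus equals $2\sqrt{\sinh^2(pL/2)+\sin^2(L/2)}\big/\sqrt{p^2+1}$.

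The only delicate point is bounding this last quantity from below, and this is where I expect the main obstacle to lie: the naive strategy of projecting the integral onto the midpoint direction to keep the integrand's cosine nonnegative breaks down as soon as $L>\pi$. Working instead from the closed form above circumvents this. Setting $\psi=L/2\in(0,\pi-\eps/2)$ and $u=pL/2$, the elementary inequality $\sinh^2 u\geq u^2$, combined with the monotonicity of $s\mapsto(s+\sin^2\psi)/(s+\psi^2)$ for $s\geq0$ (valid since $\sin^2\psi\leq\psi^2$), reduces everything to $\sin\psi/\psi\geq\sin(\eps/2)/\pi$. This final bound holds because $\psi\mapsto\sin\psi/\psi$ is decreasing on $(0,\pi)$ and $\psi<\pi-\eps/2$, so $\sin\psi/\psi>\sin(\eps/2)/(\pi-\eps/2)\geq\sin(\eps/2)/\pi$. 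Assembling these gives $\bigl|\int_{-L/2}^{L/2}e^{(p+i)\phi}\,d\phi\bigr|\geq L\sin(\eps/2)/\pi$, and restoring the prefactor $e^{-C}/L$ yields the desired lower bound and hence the lemma.
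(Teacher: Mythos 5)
Your proof is correct, and it takes a genuinely different route from the paper's. The paper's proof is a one-liner: it observes that the worst-case scenario is $\Re z=\Re w=-C$ (for a fixed angular difference, $|r_2e^{i\theta_2}-r_1e^{i\theta_1}|^2=r_1^2+r_2^2-2r_1r_2\cos(\theta_2-\theta_1)$ only decreases as the radii shrink toward the common value $e^{-C}$), after which the chord formula $|e^{i\theta_2}-e^{i\theta_1}|=2\,|\sin(\tfrac{\theta_2-\theta_1}{2})|$ reduces the claim to elementary trigonometry --- in fact to the very same inequality $\sin\psi/\psi\geq\sin(\eps/2)/\pi$ on $(0,\pi-\eps/2)$ with which you conclude. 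You instead avoid any worst-case reduction: writing $e^w-e^z=(w-z)\int_0^1e^{\zeta}\,dt$ along the segment (legitimate, as $\mathcal R$ is convex) and evaluating the averaged exponential in closed form as $2\sinh((p+i)L/2)/(p+i)$, then invoking $|\sinh(\alpha+i\beta)|^2=\sinh^2\alpha+\sin^2\beta$. Your treatment of the slope parameter is airtight: $p^2+1=(u^2+\psi^2)/\psi^2$ with $u=p\psi$, the bound $\sinh^2u\geq u^2$, and the monotonicity of $s\mapsto(s+\sin^2\psi)/(s+\psi^2)$ together give the lower bound $2\sin\psi$, and your remark that the usual projection trick fails once the angular spread exceeds $\pi$ correctly identifies why the closed form is needed. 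Your route is computationally heavier, but it buys something the paper's statement does not claim: since your lower bound $\bigl|\int_0^1e^{\zeta}\,dt\bigr|\geq e^{-C}\sin(\eps/2)/\pi$ is uniform, you actually control the full difference, $|z-w|\leq\frac{\pi e^C}{\sin(\eps/2)}\,|e^w-e^z|$, not merely $|\Im(z-w)|$ (to have this stronger form in the degenerate case $L=0$, which you dismissed as trivial for the lemma itself, one only needs to note that the integrand then has constant phase, so the integral has modulus at least $e^{-C}$).
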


\begin{proof}
This reduces to elementary trigonometry once one realizes that the worst-case scenario is $\Re z = \Re w =-C$. 
\end{proof}

We are now ready for the climax of the section:

\begin{proof}[Proof of Theorem~\ref{T:equi}]  Let us begin right away with the key computation.  Given any $q\in L^2$, we may apply \eqref{hilbert_schmidt}, \eqref{operator_sum}, and (in the final step) Cauchy--Schwarz to deduce that
\begin{align}\label{sum I_2}
    \sum_{\kappa\in2^\N}& \|i\kappa\Lambda(q)\Gamma(q)\|_{\hs}^2\\
    &\lesssim \sum_{\kappa\in 2^\N}\kappa^2 \sum_{N_1\sim N_2\geq N_3, N_4} \|\Lambda_{N_1}(q)\|_{\hs} \|\Lambda_{N_2}(q)\|_{\hs}\|\Lambda_{N_3}(q)\|_{\op}\|\Lambda_{N_4}(q)\|_{\op} \notag \\
     &\lesssim M(q) \sum_{\kappa\in2^\N} \sum_{N_1\sim N_2}  \tfrac{1}{N_2+\kappa}\log\left(4+\tfrac{N_2^2}{\kappa^2}\right) \|q_{N_1}\|_{L^2} \|q_{N_2}\|_{L^2} \min\{N_2,\kappa\} \notag \\
     & \lesssim M(q) \sum_{N_1\sim N_2} \|q_{N_1} \|_{L^2} \|q_{N_2} \|_{L^2} \Bigl(\sum_{\kappa\leq N_2}   \tfrac{\kappa}{N_2}\log\left(4+\tfrac{N_2^2}{\kappa^2}\right)  + \sum_{\kappa>N_2}   \tfrac{N_2}{\kappa}\Bigr) \notag \\
     & \lesssim M(q)^2. \notag
\end{align}
Combining this with Lemmas~\ref{L:HS} and~\ref{det 2 ish}, we find
\begin{align}\label{det vs tr}
\sum_{\kappa\in2^\N} \bigl| a(\kappa;q) - \exp\bigl\{- \tr\bigl[i\kappa \Lambda(q)\Gamma(q)\bigr]\bigr\}\bigr| \leq C M(q)^2 e^{C M(q)}  
\end{align}
for some absolute $C$.

As we did not explicitly require that $M(\tilde q) = M(q)$ for $\tilde q\in C_q^0$, let us pause to see that this follows from the equality $a(\kappa; \tilde q)\equiv a(\kappa; q)$.  From \eqref{E:det 2 ish} and Lemma~\ref{L:dominant} we see that for $\kappa\to\infty$,
\begin{align*}
0= \bigl| a(\kappa;\tilde q) - a(\kappa;q)\bigr| 
&= \bigl| \exp\bigl\{ -\tr\bigl[i\kappa \Lambda\bigl(\tilde q\bigr)\Gamma\bigl(\tilde q\bigr)\bigr]
		- \exp\bigl\{- \tr\bigl[i\kappa \Lambda\bigl(q\bigr)\Gamma\bigl(q\bigr)\bigr]\bigr\}  \bigr|  + o(1)\\
&= \bigl| \exp\bigl\{ -\tfrac{i}2 M\bigl(\tilde q\bigr) \bigr\} - \exp\bigl\{-\tfrac{i}2 M\bigl(q\bigr) \bigr\}\bigr| + o(1).
\end{align*}
Thus $M(\tilde q)$ is preserved modulo $4\pi\Z$.  As $\tilde q$ belongs to the same connected component as $q$, we must have that $M(\tilde q) = M(q)$.  For later use, we note the consequence
\begin{align}\label{M equal}
\sup_{q\in Q_{**}} M(q) = \sup_{q\in Q} M(q).
\end{align}

While this argument did not require the hypothesis \eqref{mass bound}, we will need it to unwrap this phase ambiguity when we address equicontinuity.  This is our next topic.

Given an equicontinuous set $Q$ satisfying \eqref{mass bound}, let us choose $\eps>0$ and an infinite subset $\mathcal K\subseteq 2^\N$ so that
$$
\sup_{q\in Q,\kappa\in\mathcal K} \tfrac{1+e^{-\kappa}}{1-e^{-\kappa}} M(q)  \leq 4\pi - 2\eps \qtq{and} \sup_{q\in Q} \| q \|_{\mathcal K} <\infty.
$$
Proceeding very much as we did above, we see that
\begin{align*}
\sum_{\kappa\in\mathcal K} \bigl| \exp\bigl\{- \tr\bigl[i\kappa \Lambda\bigl(\tilde q\bigr)\Gamma\bigl(\tilde q\bigr)\bigr]
		- \exp\bigl\{- \tr\bigl[i\kappa \Lambda\bigl(q\bigr)\Gamma\bigl(q\bigr)\bigr]\bigr\}  \bigr|
\leq  2 C M(q)^2 e^{C M(q)}
\end{align*}
for any $\tilde q\in C_q^0$.  Combining this with \eqref{beta2 vs tr} and Lemma~\ref{L:unwrap}, we deduce that
\begin{align*}
\sum_{\kappa\in\mathcal K} \Bigl|  \beta^{[2]}\bigl(\kappa;\tilde q\bigr) - \beta^{[2]}\bigl(\kappa;q\bigr) \Bigr| \lesssim_\eps 1.
\end{align*}
This in turn guarantees that
$$
\sup\bigl\{ \|\tilde q \|_{\mathcal K}^2 : \tilde q\in Q_{**}\bigr\} \leq \sup\bigl\{ \|q \|_{\mathcal K}^2 :  q\in Q\bigr\} + O_\eps(1) < \infty,
$$
from which equicontinity follows via Lemma~\ref{L:equi via K}.
\end{proof}

\section{Conservation laws and Equicontinuity} \label{sec; Hs}

The primary goal of this section is to prove $H^s$ bounds for \eqref{DNLS} solutions, for $0<s<\frac12$, as a prerequisite for proving Theorem~\ref{T:well}.  We will also prove equicontinuity in these spaces, which is also needed to prove that theorem.

Before turning to that subject, we pause to show how $L^2$-equicontinuity can be used to restore coercivity to the traditional polynomial conservation laws.   As a representative example, we show how $H_2(q)$ can be used to control the $H^1$-norm: 

\begin{proposition}\label{P:H1 via H2}
Let $Q\subseteq H^1$ be $L^2$-bounded and equicontinuous.  Then
\begin{align}\label{H1}
\|q \|_{H^1}^2 \lesssim  H_2(q) +M(q)^3,
\end{align}
uniformly for all $q\in Q$.
\end{proposition}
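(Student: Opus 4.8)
The plan is to reduce everything to a single cubic cross-term and then to beat the focusing nonlinearity using equicontinuity. Writing $\|q\|_{H^1}^2 = M(q) + \|q'\|_{L^2}^2$, it suffices to control the homogeneous piece $\|q'\|_{L^2}^2$. Solving the definition of $H_2(q)$ for this quantity gives
\begin{align*}
\|q'\|_{L^2}^2 = H_2(q) - \tfrac34\int i|q|^2(q\bar q' - \bar q q')\,dx - \tfrac12\int|q|^6\,dx .
\end{align*}
The sextic term is favorable (it carries a minus sign and is dropped), so the only dangerous contribution is the cubic cross-term, which I would estimate crudely by H\"older as
\begin{align*}
\Bigl|\tfrac34\int i|q|^2(q\bar q'-\bar q q')\,dx\Bigr| \le \tfrac32\int |q|^3|q'|\,dx \le \tfrac32\,\|q'\|_{L^2}\,\|q\|_{L^6}^3 .
\end{align*}
Thus the whole proposition comes down to controlling $\|q\|_{L^6}^3$.

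The key point, and the reason equicontinuity enters, is that the naive Gagliardo--Nirenberg bound $\|q\|_{L^6}^3\lesssim \|q'\|_{L^2}\|q\|_{L^2}^2$ would only close the estimate for $M(q)$ small, reproducing the smallness restrictions of earlier work. Instead I would establish the \emph{small-constant} inequality
\begin{align*}
\|q\|_{L^6}^3 \le \eta\,\|q'\|_{L^2} + C_\eta\, M(q)^{3/2}, \qquad q\in Q,
\end{align*}
valid for every $\eta>0$ with $C_\eta$ depending only on $\eta$ and on $Q$. To obtain it, I fix a Littlewood--Paley cutoff $N$ large enough that $\sup_{q\in Q}\|q_{>N}\|_{L^2}$ is as small as desired (this uniform tail smallness is precisely equicontinuity; cf. Lemma~\ref{L:equi via K}) and split $q = q_{\le N}+q_{>N}$. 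For the high part, Gagliardo--Nirenberg gives $\|q_{>N}\|_{L^6}^3\lesssim \|q_{>N}\|_{L^2}^2\,\|q'\|_{L^2}$, and the small factor $\|q_{>N}\|_{L^2}^2$ supplies the coefficient $\eta$. For the low part, Bernstein's inequality yields $\|q_{\le N}'\|_{L^2}\lesssim N\|q_{\le N}\|_{L^2}$, whence $\|q_{\le N}\|_{L^6}^3\lesssim N\,M(q)^{3/2}$, which is the $C_\eta M^{3/2}$ term. Both steps go through identically on the line and on the circle.

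Inserting this into the first display produces $\|q'\|_{L^2}^2 \le H_2(q) + \tfrac32\eta\|q'\|_{L^2}^2 + C\,M(q)^{3/2}\|q'\|_{L^2}$; a further application of Young's inequality to the last term, followed by the choice of $\eta$ sufficiently small, lets me absorb both multiples of $\|q'\|_{L^2}^2$ into the left-hand side and leaves $\|q'\|_{L^2}^2\lesssim H_2(q)+M(q)^3$. Combined with $\|q\|_{H^1}^2 = M(q)+\|q'\|_{L^2}^2$, this yields \eqref{H1}. I expect the \textbf{main obstacle} to be exactly this cubic cross-term: it is the focusing structure of \eqref{DNLS} that rules out a purely soft argument, and the entire content of the proposition is that equicontinuity substitutes for smallness of $M(q)$ in defeating the sharp Gagliardo--Nirenberg constant. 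The only secondary technical point is making the frequency splitting uniform over $Q$, which is exactly what the equicontinuity hypothesis guarantees.
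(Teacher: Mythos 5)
Your proposal is correct and follows essentially the same route as the paper: the same Littlewood--Paley splitting at a frequency $N$, Bernstein for the low piece and Gagliardo--Nirenberg for the high piece, with equicontinuity supplying the uniformly small coefficient $\|q_{>N}\|_{L^2}$ that lets you absorb the cubic cross-term into $\|q'\|_{L^2}^2$. The only difference is cosmetic: you apply H\"older first and package the smallness as an $\eta\|q'\|_{L^2}+C_\eta M^{3/2}$ bound on $\|q\|_{L^6}^3$, whereas the paper applies Young's inequality first and splits $\|q\|_{L^6}^6$ directly.
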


\begin{proof}
Splitting into low and high frequency parts and estimating using the Bernstein and Gagliardo--Nirenberg inequalities, respectively, we obtain
\begin{align*}
    \|q\|_{L^6}^6&\lesssim \|q_{\leq N}\|_{L^6}^6 + \|q_{>N}\|_{L^6}^6
    \lesssim N^2 \|q\|_{L^2}^6 + \|q_{>N}\|_{L^2}^4 \|q'\|_{L^2}^2.
\end{align*}
This allows us to control the quartic term in $H_2$, and hence the $H^1$-norm, as follows:
\begin{align*}
    \|q'\|_{L^2}^2&\leq H_2(q) + \tfrac{3}{2} \int |q(x)|^3 |q'(x)|\,dx\\
    & \leq H_2(q)+  \varepsilon \|q'\|_{L^2}^2+ \tfrac{9}{16 \varepsilon}\|q\|_{L^6}^6\\
    & \leq H_2(q)+  \left(\varepsilon +C\tfrac{9}{16 \varepsilon} \|q_{> N}\|_{L^2}^4 \right) \|q'\|_{L^2}^2+ C\tfrac{9}{16 \varepsilon} N^2 M(q)^3,
\end{align*}
for any $\varepsilon>0$.  The claim \eqref{H1} now follows by choosing $\varepsilon$ small and then $N$ large, exploiting the equicontinuity of $Q$.
\end{proof}

Proposition~\ref{P:H1 via H2} allows us to extend local $H^1$ solutions globally in time, provided we remain below the $M_*$ bound introduced in Definition~\ref{D:M_*}.

\begin{corollary}\label{C:H1}
The \eqref{DNLS} evolution is globally well-posed, both on the line and on the circle, in the space
\begin{align}
B^1_{M_*} = \bigl\{ q\in H^1 : \| q\|_{L^2}^2 < M_* \bigr\} 
\end{align}
endowed with the $H^1$ topology.  Moreover, initial data in $\Schw$ leads to solutions that belong to $\Schw$ at all times.
\end{corollary}

\begin{proof}
In the line case, this result can be deduced from \cite{bahouri2020global}; indeed, the restriction $M(q)<M_*$ is not needed in this case.  Below we give an alternate argument that works also in the periodic setting.

As discussed in the introduction, local well-posedness in $H^1$ was proved already in \cite{MR1693278,MR2219223}.  Thus, given initial data $q(0)\in B^1_{M_*}\cap \Schw$, there is a corresponding maximal lifespan solution $q\in C_t([0,T); H^1)$ to \eqref{DNLS}.  Moreover, \cite{MR1152001} shows that $q(t)\in\Schw$ for all $t\in [0,T)$.  Combining \cite{klaus2020priori,tang2020microscopic}  with Proposition~\ref{P:large kappa} yields that $a(\kappa; q(t)) =a(\kappa, q(0))$ for all $t\in [0,T)$ and $\kappa>0$. By the definition of $M_*$ and Proposition~\ref{P:H1 via H2}, the solution $q$ satisfies a priori $H^1$ bounds on $[0,T)$, which in turn guarantees that $T=\infty$.

Finally, global well-posedness in $B^1_{M_*}$ follows from local well-posedness and the density of $\Schw$ in $H^1$.
\end{proof}

Let us now turn to low-regularity questions.  Bounded sets in $H^s$, $s>0$, are automatically bounded and equicontinuous in $L^2$.   As we shall work only below the $M_*$ threshold in this section, such $L^2$-equicontinuity is retained globally in time. Our goal is to propagate $H^s$ bounds.  The key to doing this is a certain renormalization of $\alpha(\kappa;q)$ that we introduce now:
\begin{equation}\label{beta defn}
\begin{alignedat}{2}
    \beta_{\mathbb R}(\kappa;q)&:= \|q\|_{L^2}^2-2 \Im \alpha(\kappa;q) &&\text{on}\,\,\mathbb R,\\
    \beta_{\mathbb T}(\kappa;q)&:= \|q\|_{L^2}^2- \tfrac{1-e^{-\kappa}}{1+e^{-\kappa}}\, 2 \Im \alpha(\kappa;q) &\qquad &\text{on}\,\,\mathbb T.
\end{alignedat}
\end{equation}
Proposition~\ref{P:large kappa} guarantees that these quantities are well defined for $\kappa$ sufficiently large across our whole family of orbits.

The quadratic (in $q$) parts of these expressions were presented already in \eqref{beta2 defn}.  As we saw there, these provide a sense of the $L^2$-norm of the high-frequency part of $q$.  To address higher regularity, for $0<s<\frac12$ we consider the quantity
\begin{align*}
    \beta_{s}(\kappa;q):= \int_\kappa^\infty \beta(\varkappa;q) \varkappa^{2s} \tfrac{d\varkappa}{\varkappa}.
\end{align*}
The quadratic term in this expression is given by
\begin{align*}
    \beta_{s}^{[2]}(\kappa;q)&= \int_\kappa^\infty \beta^{[2]}(\varkappa;q)\, \varkappa^{2s}\, \tfrac{d\varkappa}{\varkappa}=\int_\kappa^\infty \bigl\langle \tfrac{-\partial^2}{4\varkappa^2-\partial^2}q,q \bigr\rangle\,  \varkappa^{2s} \, \tfrac{d\varkappa}{\varkappa}\approx_s \bigl\langle \tfrac{-\partial^2}{(\kappa^2-\partial^2)^{1-s}}q,q \bigr\rangle.
\end{align*}
From this we see that for any $0<\eta<1$,
\begin{align}\label{Hs high}
      \|q_{>\kappa}\|_{H^s}^2 \lesssim \beta_{s}^{[2]}(\kappa;q) \lesssim \eta^{2(1-s)} \|q\|_{H^s}^2 + \|q_{>\eta\kappa}\|_{H^s}^2,
\end{align}
and so $\beta_{s}^{[2]}(\kappa;q)$ captures the $H^s$-norm of the high-frequency part of $q$.  Indeed, a bounded set $Q\subseteq H^s$ is equicontinuous in $H^s$ if and only if $\beta_{s}^{[2]}(\kappa;q)\to 0$ uniformly on $Q$ as $\kappa\to\infty$. 

\begin{theorem} \label{T:Hs conservation}
Fix $0<s<\tfrac{1}{2}$ and let $\,Q\subseteq \mathcal S$ be $H^s$-bounded and satisfy \eqref{M_*}.  Then, recalling the notation $\,Q_{**}$ from \eqref{D:Q*}, we have
\begin{align} \label{Hs bound}
    \sup_{q\in Q_{**}}\|q\|_{H^s} \lesssim C\Bigl( \, \sup_{q\in Q} \| q\|_{L^2}^2 , \, \sup_{q\in Q} \| q\|_{H^s}^2 \Bigr).
\end{align}
Moreover, if $Q$ is $H^s$-equicontinuous, then so is $Q_{**}$.
\end{theorem}

\begin{proof}
As $Q$ is $H^s$-bounded,  it is automatically $L^2$-bounded and equicontinuous.  By \eqref{M equal}, $Q_{**}$ inherits $L^2$-boundedness from $Q$. As $Q$ satisfies \eqref{M_*}, we deduce that $Q_{**}$ is also $L^2$-equicontinuous. By Proposition~\ref{P:large kappa}, we may choose $\kappa_0\geq 1$ so that
\begin{align}\label{little op again}
\sqrt{\kappa}\,\bigl\|\Lambda(q)\bigr\|_{\op}\leq\tfrac12 \qquad\text{uniformly for $ q\in Q_{**}$ and $\kappa\geq \kappa_0$.}
\end{align}
As shown there, this ensures that $\alpha(\kappa;q)$ and so also $\beta(\kappa;q)$ are well defined for all $q\in Q_{**}$ and $\kappa\geq \kappa_0$.

Arguing as in \eqref{geometric decay}, we also see that \eqref{little op again} implies
\begin{align} 
    | \beta_s(\kappa;q)&- \beta^{[2]}_s(\kappa;q)|\lesssim \int_\kappa^\infty  \varkappa^{2s+2} \|\Lambda(q) \Gamma(q)\|_{\hs}^2 \tfrac{d\varkappa}{\varkappa} \notag\\
    &\lesssim \int_\kappa^\infty  \varkappa^{2s+2} \sum_{N_1\sim N_2\geq N_3\geq N_4} \|\Lambda_{N_1}\|_{\hs} \|\Lambda_{N_2}\|_{\hs}\|\Lambda_{N_3}\|_{\op}\|\Lambda_{N_4}\|_{\op} \tfrac{d\varkappa}{\varkappa}\label{beta_s mess}
\end{align}
uniformly for $q\in Q_{**}$ and $\kappa\geq \kappa_0$.  To continue from here, we decompose the full sum into the subregions $S_j$ defined by
\begin{align*}
    S_1&=\{N_2\leq \kappa\},\\
    S_2&=\{\kappa<N_2\leq \varkappa \text{ and } N_3\leq \eta \kappa\},\\
    S_3&=\{\kappa<N_2\leq \varkappa \text{ and } N_3> \eta \kappa\},\\
    S_4&=\{N_2> \varkappa \text{ and } N_3\leq \eta \kappa\},\\
    S_5&=\{N_2> \varkappa \text{ and } N_3> \eta \kappa\},
\end{align*}
where $\eta\in(0,1)$ is a small parameter to be chosen later.  We will estimate separately each of the contributions
$$
I_j(\kappa;q) := \int_\kappa^\infty  \varkappa^{2s+2} \smash{\sum_{S_j}}\, \|\Lambda_{N_1}\|_{\hs} \|\Lambda_{N_2}\|_{\hs}\|\Lambda_{N_3}\|_{\op}\|\Lambda_{N_4}\|_{\op} \tfrac{d\varkappa}{\varkappa}.
$$

Applying \eqref{hilbert_schmidt} and \eqref{operator_sum} from Lemma~\ref{L:op est}, we have
\begin{align*}
I_1 &\lesssim \int_\kappa^\infty  \varkappa^{2s+2} \!\!\sum_{N_1\sim N_2\leq \kappa} \tfrac{N_2}{\varkappa^3} \|q_{N_1}\|_{L^2}  \|q_{N_2}\|_{L^2} \|q\|_{L^2}^2  \,\tfrac{d\varkappa}{\varkappa} \\
    &\lesssim \kappa^{2s-1} \|q\|_{L^2}^2 \!\! \sum_{N_1\sim N_2\leq \kappa} \!\! N_2 \|q_{N_1}\|_{L^2}  \|q_{N_2}\|_{L^2} \\
    &\lesssim \kappa^{2s} \|q\|_{L^2}^4.
\end{align*}

Proceeding analogously and using \eqref{Hs high}, we find
\begin{align*}
I_2 &\lesssim  \sum_{N_1\sim N_2 >\kappa} \int_{N_2}^\infty  \eta\kappa \varkappa^{2s-1} N_2^{-2s}  \|q_{N_1}\|_{H^s}  \|q_{N_2}\|_{H^s} \|q\|_{L^2}^2  \, \tfrac{d\varkappa}{\varkappa} \\
    & \lesssim  \sum_{N_1\sim N_2>\kappa}  \eta\kappa N_2^{-1} \|q_{N_1}\|_{H^s}  \|q_{N_2}\|_{H^s} \|q\|_{L^2}^2 \\
    & \lesssim  \eta \|q\|_{L^2}^2 \beta^{[2]}_s(\kappa;q),
\\
I_3 &\lesssim \sum_{N_1\sim N_2 >\kappa} \int_{N_2}^\infty  \varkappa^{2s-1} N_2^{1-2s} \|q_{N_1}\|_{H^s}  \|q_{N_2}\|_{H^s} \|q\|_{L^2} \|q_{>\eta\kappa}\|_{L^2}  \,\tfrac{d\varkappa}{\varkappa} \\
	& \lesssim \sum_{N_1\sim N_2 >\kappa} \|q_{N_1}\|_{H^s}  \|q_{N_2}\|_{H^s} \|q\|_{L^2} \|q_{>\eta\kappa}\|_{L^2} \\
    & \lesssim   \|q\|_{L^2} \|q_{>\eta\kappa}\|_{L^2} \beta^{[2]}_s(\kappa;q), 
\\
I_4 &\lesssim \sum_{N_1\sim N_2> \kappa}  \int_\kappa^{N_2} \! \eta\kappa \varkappa^{2s}\log\left(4+\tfrac{N_2^2}{\varkappa^2}\right)N_2^{-1-2s} \|q_{N_1}\|_{H^s}  \|q_{N_2}\|_{H^s} \|q\|_{L^2}^2  \,\tfrac{d\varkappa}{\varkappa} \\
    &\lesssim  \sum_{N_1\sim N_2> \kappa}  \eta\kappa N_2^{-1} \|q_{N_1}\|_{H^s}  \|q_{N_2}\|_{H^s} \|q\|_{L^2}^2 \\
    &\lesssim \eta \|q\|_{L^2}^2 \beta^{[2]}_s(\kappa;q),
\end{align*}
and finally,
\begin{align*}
I_5 &\lesssim \sum_{N_1\sim N_2> \kappa}  \int_\kappa^{N_2} \!  \varkappa^{2s}  \log\left(4+\tfrac{N_2^2}{\varkappa^2}\right)N_2^{-2s} \|q_{N_1}\|_{H^s}  \|q_{N_2}\|_{H^s} \|q\|_{L^2} \|q_{>\eta\kappa}\|_{L^2}   \,\tfrac{d\varkappa}{\varkappa} \\
    &\lesssim  \sum_{N_1\sim N_2> \kappa}  \|q_{N_1}\|_{H^s}  \|q_{N_2}\|_{H^s} \|q\|_{L^2} \|q_{>\eta\kappa}\|_{L^2} \\
    & \lesssim  \|q\|_{L^2} \|q_{>\eta\kappa}\|_{L^2} \beta^{[2]}_s(\kappa;q).
\end{align*}

Collecting all our estimates, we conclude that
\begin{align*}
     \left| \beta_s(\kappa;q)- \beta^{[2]}_s(\kappa;q) \right| \lesssim \kappa^{2s} \|q\|_{L^2}^4 + \bigl(\eta \|q\|_{L^2}^2  + \|q\|_{L^2} \|q_{>\eta\kappa}\|_{L^2} \bigr) \beta^{[2]}_s(\kappa;q)
\end{align*}
uniformly on $Q_{**}$.  As $Q_{**}$ is $L^2$-bounded and equicontinuous, we may choose $\eta$ small and then $\kappa_1\geq \kappa_0$ large to deduce that
\begin{align}\label{almost}
    \sup_{q\in Q_{**}} \beta^{[2]}_s(\kappa;q)\lesssim \sup_{q\in Q}\beta^{[2]}_s(\kappa;q) + \kappa^{2s} \sup_{q\in Q} \|q\|_{L^2}^4\quad\text{for all}\,\,\kappa\geq \kappa_1.
\end{align}
The claim \eqref{Hs bound} now follows from \eqref{Hs high} by choosing $\kappa=\kappa_1$.

It remains to prove that $H^s$-equicontinuity for $Q$ is inherited by $Q_{**}$.  This requires a different estimate for $I_1$.  Using \eqref{hilbert_schmidt} and \eqref{operator}, we obtain
\begin{align*}
I_1 &\lesssim \sum_{N_4\leq \cdots\leq N_1\leq \kappa} \sqrt{N_3 N_4} \, \|q_{N_1}\|_{L^2}  \|q_{N_2}\|_{L^2} \|q_{N_3}\|_{L^2}  \|q_{N_4}\|_{L^2}   \int_\kappa^\infty  \varkappa^{2s-1} \,\tfrac{d\varkappa}{\varkappa} \\
    &\lesssim \kappa^{2s-4\sigma} \|q \|_{H^{s}}^4,
\end{align*}
where $\sigma =\min\{s,\frac14\}$.  Now that we know \eqref{Hs bound}, we may employ it here to deduce the following analogue of \eqref{almost}:
\begin{align}\label{almost'}
\sup_{q\in Q_{**}} \beta^{[2]}_s(\kappa;q)\lesssim \sup_{q\in Q}\beta^{[2]}_s(\kappa;q)
	+ \kappa^{2s-4\sigma} C\Bigl( \, \sup_{q\in Q} \| q\|_{L^2}^2 , \, \sup_{q\in Q} \| q\|_{H^s}^2 \Bigr)^4
\end{align}
uniformly for $\kappa\geq \kappa_1$.  As $4\sigma>2s$, equicontinuity follows by sending $\kappa\to\infty$.
\end{proof}

\section{Global well-posedness in $H^s$ for $s\geq \frac16$} \label{sec; gwp}

In order to treat the line and circle simultaneously, it is convenient to introduce
\begin{align}
    A(\kappa;q) = \alpha(\kappa;q) \quad\text{on $\R$} \qquad\text{and}\qquad A(\kappa;q) = \tfrac{1-e^{-\kappa}}{1+e^{-\kappa}}\alpha(\kappa;q) \quad\text{on $\T$.}
\end{align}
This leads to parallel leading asymptotic expansions:
\begin{align*}
    A(\kappa;q)=\tfrac{i}{2}M(q) + \tfrac{1}{4\kappa} H(q) + O\bigl(\tfrac1{\kappa^2}\bigr),
\end{align*}
as follows from Lemmas~\ref{L:dominant} and~\ref{L:sub dominant}.  This expansion is important; it guides our choice of regularized Hamiltonian flows.  We choose
\begin{align*}
    H_\kappa(q):= 4\kappa \Re A(\kappa;q),
\end{align*}
since, formally at least, $H(q)=H_\kappa(q) +O(\kappa^{-1})$, which suggests that the flow generated by $H_\kappa(q)$ approximates the \eqref{DNLS} flow as the parameter $\kappa$ diverges to infinity.

The flow generated by $H_\kappa(q)$ with respect to the Poisson structure \eqref{Poisson} is
\begin{align} \label{Hk}\tag{$H_\kappa$}
\tfrac{d}{dt} q= \left( \tfrac{\delta H_\kappa}{\delta \bar q} \right)'
   	= 2\kappa\left( \tfrac{\delta A(\kappa;q)}{\delta \bar q}+ \overline{\tfrac{\delta A(\kappa;q)}{\delta q}}\right)',
	\qtq{since} \tfrac{\delta \bar A}{\delta \bar q} = \overline{\tfrac{\delta A}{\delta q}}.
\end{align}

Our first task in this section is to prove that the $H_\kappa$ flow is well-posed on $L^2$-equicontinuous sets of Schwartz initial data satisfying \eqref{M_*}, provided $\kappa$ is chosen sufficiently large depending on the equicontinuous family; see Proposition~\ref{P:Hk wp}.  Moreover, we will show that the corresponding solutions belong to  $\Schw$ for all times.  

In Lemma~\ref{L:alpha commutes}, the $H_\kappa$ flow will be shown to conserve $M(q)$ and $\alpha(\varkappa;q)$; thus, it satisfies both the $H^s$-bounds and the $H^s$-equicontinuity guaranteed by Theorem~\ref{T:Hs conservation}.  Together with Proposition~\ref{P:Hk wp}, this immediately yields well-posedness of the $H_\kappa$ flow on $H^s$ for all $0\leq s< \frac12$ under the restriction \eqref{M_*}; see Corollary~\ref{C:Hk wellposed in Hs}.

To prove that the \eqref{DNLS} flow is well-posed in $H^s$ for $\frac16\leq s< \frac12$, it then suffices to prove that this is well approximated by \eqref{Hk} flows as $\kappa\to \infty$.  An important ingredient in our argument is the commutativity of the $H_\kappa$ and \eqref{DNLS} flows, at least on $\Schw$.  This follows from Lemma~\ref{L:alpha commutes} and the well-posedness of these flows on $\Schw$ by mimicking the arguments in \cite[\S39]{MR0997295}.  In view of this commutativity, proving convergence of the \eqref{Hk} flows to the \eqref{DNLS} flow amounts to showing that the flow generated by the difference of the Hamiltonians $H(q)-H_\kappa(q)$ converges to the identity as $\kappa\to \infty$.  This final stage of the proof will be carried out in Theorem~\ref{T:DNLS respects limits}.

In order to make sense of \eqref{Hk}, we must prove that $\alpha(\kappa;q)$ is in fact differentiable. To solve \eqref{Hk} locally in time, we further need to show that this functional derivative is itself a Lipschitz function of $q$.  These goals require us to define $\alpha(\kappa;q)$ on open sets in $L^2$, rather than merely equicontinuous sets.  The next result addresses these issues.

Here and below we write $Q_\eps$ to denote the $\eps$ neighborhood of $Q$ in the $L^2$-metric.

\begin{lemma} \label{L:alpha properties}
Let $Q$ be a bounded and equicontinuous subset of $L^2$.  Then there exist $\eps>0$ and $\kappa_0\geq 1$ so that for all $\kappa\geq \kappa_0$, $\alpha(\kappa;q)$ is a real-analytic function of $q\in Q_\eps$.  Moreover, we have the following bounds
\begin{gather}
\bigl\| \tfrac{\delta \alpha(\kappa;q)}{\delta q}\bigr\|_{H^{1}} + \bigl\| \tfrac{\delta \alpha(\kappa;q)}{\delta \bar q}\bigr\|_{H^{1}}
    	\lesssim \kappa \|q\|_{L^2} \label{delta alpha bdd}\\
\bigl\|\tfrac{\delta \alpha(\kappa;q)}{\delta q}- \tfrac{\delta \alpha(\kappa;\tilde q)}{\delta q}\bigr\|_{H^1} +
    \bigl\|\tfrac{\delta \alpha(\kappa;q)}{\delta \bar q}- \tfrac{\delta \alpha(\kappa;\tilde q)}{\delta \bar q}\bigr\|_{H^1}
    \lesssim \kappa \|q-\tilde q\|_{L^2}\label{delta alpha lipschitz}
\end{gather}
where the implicit constants depend only on $Q$.  Additionally, for every $\kappa\geq \kappa_0$ and $q\in Q_\eps$, there exists $\gamma(\kappa;q)\in H^1$ so that
\begin{align}
\label{delta alpha'}
    \bigl(\tfrac{\delta \alpha(\kappa;q)}{\delta \bar q}\bigr)' &= 2\kappa \tfrac{\delta \alpha(\kappa;q)}{\delta \bar q} -i\kappa q [\gamma(\kappa;q)+1], \\
\label{delta alpha'2}
    \bigl(\tfrac{\delta \alpha(\kappa;q)}{\delta q}\bigr)'  &=-2\kappa \tfrac{\delta \alpha(\kappa;q)}{\delta  q} +i\kappa \bar q [\gamma(\kappa;q)+1], \\
\label{gamma'}
    \gamma(\kappa;q)'  &= 2 \bar q \tfrac{\delta \alpha(\kappa;q)}{\delta \bar q} -2 q \tfrac{\delta \alpha(\kappa;q)}{\delta q}.
\end{align}
Lastly, for each integer $m\geq 0$ we have
\begin{align}
\bigl\|\bigl(\tfrac{\delta \alpha(\kappa;q)}{\delta \bar q}\bigr)' \bigr\|_{H^m}&\lesssim_m \kappa \|q\|_{H^m},\label{deriv}\\
\bigl\|\langle x\rangle^{2m}\bigl(\tfrac{\delta \alpha(\kappa;q)}{\delta \bar q}\bigr)' \bigr\|_{L^2}&\lesssim_m \kappa \|\langle x\rangle^{2m}q\|_{L^2},\label{weights}
\end{align}
uniformly for $q\in Q_\eps$ and $\kappa\geq \kappa_0$.
\end{lemma}

\begin{proof}
Proposition~\ref{P:large kappa} shows that given $\delta\in(0,1]$, there exists $\kappa_0\geq 1$ so that
\begin{align*}
\sup_{q\in Q} \sqrt{\kappa} \, \|\Lambda(q)\|_{\op}\leq \tfrac\delta4 \quad\text{uniformly for $\kappa\geq \kappa_0$}.
\end{align*}
As $\Lambda(q)$ is linear in $q$, Lemma~\ref{L:HS} allows us to deduce
\begin{align}\label{equi}
\sup_{q\in Q_\eps} \sqrt{\kappa} \, \|\Lambda(q)\|_{\op}\leq \tfrac\delta2 \quad\text{uniformly for $\kappa\geq \kappa_0$},
\end{align}
provided $\eps$ is chosen sufficiently small (depending on $\delta$).

Now we must explain how to choose $\delta$. In view of \eqref{geometric decay}, $\delta\leq 1$ guarantees that the series \eqref{alpha defn} converges on $Q_\eps$.  We place an additional requirement to aid in the proofs of \eqref{delta alpha bdd} and \eqref{delta alpha lipschitz}. From Lemma~\ref{L:H^s estimates} we find that
\begin{align*}
\|(\kappa+\partial)^{-\frac14}q (\kappa-\partial)^{-\frac14}\|_{\op} \cdot \|(\kappa-\partial)^{-\frac34}q(\kappa+\partial)^{-\frac34}\|_{\op}
	\lesssim \|q\|_{L^2} \cdot \kappa^{-\frac12} \| \Lambda(q) \|_\op .
\end{align*}
Thus, we may choose $\delta$ even smaller if necessary to ensure also that
\begin{align}\label{equi for f}
\kappa \, \bigl\|(\kappa+\partial)^{-\frac14}q (\kappa-\partial)^{-\frac14}\bigr\|_{\op}
	\cdot \bigl\|(\kappa-\partial)^{-\frac34}q(\kappa+\partial)^{-\frac34}\bigr\|_{\op}
 \leq \tfrac12
\end{align}
uniformly for $q\in Q_\eps$ and $\kappa\geq \kappa_0$.

Turning now to \eqref{delta alpha bdd}, we argue by duality.  For $f\in H^{-1}$, we have
\begin{align*}
    \Bigl\langle f,\tfrac{\delta \alpha(\kappa;q)}{\delta \bar q}\Bigr\rangle
    = \sum_{\ell\geq 0}(i\kappa)^{\ell +1}\tr\left\{ \left[ (\kappa-\partial)^{-1}q (\kappa+\partial)^{-1}\bar q \right]^{\ell} (\kappa-\partial)^{-1}q(\kappa+\partial)^{-1} \bar f\right\}.
\end{align*}
The $\ell=0$ term is readily computed exactly via Lemma~\ref{L:dominant}.  For example,
\begin{align*}
i\kappa \tr\left\{ (\kappa-\partial)^{-1}q(\kappa+\partial)^{-1} \bar f\right\}= i\kappa \bigl\langle \tfrac{1}{2\kappa+\partial}f ,q \bigr\rangle
	\quad\text{in the line case.}
\end{align*}
In either geometry, this is easily seen to satisfy the desired bound.

For $\ell\geq 1$, we employ \eqref{equi for f} and Lemma \ref{L:H^s estimates} to estimate
\begin{align*}
    &\left|(i\kappa)^{\ell +1}\tr\left\{ \left[ (\kappa-\partial)^{-1}q (\kappa+\partial)^{-1}\bar q \right]^{\ell} (\kappa-\partial)^{-1}q(\kappa+\partial)^{-1} \bar f\right\}\right|\\
    &\quad\lesssim \kappa^{\ell +1}  \|(\kappa+\partial)^{-1}\bar f (\kappa-\partial)^{-1}\|_{\hs} \|q(\kappa+\partial)^{-\frac34}\|_{\hs}^2\|(\kappa+\partial)^{-\frac14}q (\kappa-\partial)^{-\frac14}\|_{\op}^\ell\\
    &\qquad\qquad \times\|(\kappa-\partial)^{-\frac34}q(\kappa+\partial)^{-\frac34}\|_{\op}^{\ell-1} \\
    &\quad\lesssim 2^{-\ell} \kappa  \|f\|_{H^{-1}}\|q\|_{L^2}^{3},
\end{align*}
with an implicit constant independent of $\ell$.  This proves that the estimate \eqref{delta alpha bdd} holds for the $\bar q$ derivative; the bound on the $q$ derivative follows in a parallel fashion.

The proof of \eqref{delta alpha lipschitz} proceeds analogously, noting that one can always exhibit the difference $q-\tilde q$ in place of a $q$.

We define $\gamma(\kappa;q)$ via the associated linear functional,
\begin{align}\label{gamma as fnl}
    \langle f,\gamma(\kappa;q) \rangle =& \sum_{\ell\geq 1}(i\kappa)^{\ell} \tr\Bigl\{ \bigl[ (\kappa-\partial)^{-1}q (\kappa+\partial)^{-1}\bar q \bigr]^{\ell} (\kappa-\partial)^{-1} \bar f \Bigr\}\\
    &+ \sum_{\ell\geq 1}(i\kappa)^{\ell} \tr\Bigl\{ \bigl[ (\kappa+\partial)^{-1}\bar q(\kappa-\partial)^{-1}q \bigr]^{\ell} (\kappa+\partial)^{-1} \bar f \Bigr\}, \notag
\end{align}
and will prove $\gamma\in H^{1}$ by showing that this functional is bounded for $f\in H^{-1}$.

Regarding the $\ell=1$ terms, Lemma~\ref{L:H^s estimates} and direct computation show that 
\begin{align*}
\Bigl| \kappa \tr\Bigl\{ (\kappa\mp\partial)^{-1}q (\kappa\pm\partial)^{-1}\bar q (\kappa\mp\partial)^{-1} \bar f \Bigr\} \Bigr|
	&\lesssim \sqrt{\kappa} \, \| q (\kappa\pm\partial)^{-1}\bar q \|_{\hs} \|f\|_{H^{-1}} \\
	&\lesssim \sqrt{\kappa} \, \| q \|_{L^2}^2 \|f\|_{H^{-1}}.
\end{align*}
For $\ell\geq 2$, we employ Lemma~\ref{L:H^s estimates} and \eqref{equi for f} as follows:
\begin{align*}
\Bigl|  \kappa^\ell & \tr\Bigl\{ \bigl[ (\kappa\mp\partial)^{-1}q (\kappa\pm\partial)^{-1}\bar q\bigr]^{\ell} (\kappa\mp\partial)^{-1} f \Bigr\} \Bigr| \\
&\lesssim \kappa^{\ell-\frac12} \|f\|_{H^{-1}} \|q(\kappa+\partial)^{-\frac34}\|_{\hs}^2 \|(\kappa-\partial)^{-\frac12}\|_{\op}
	\|(\kappa+\partial)^{-\frac14}q (\kappa-\partial)^{-\frac14}\|_{\op}^\ell\\
&\qquad\qquad \times
	\|(\kappa-\partial)^{-\frac34}q(\kappa+\partial)^{-\frac34}\|_{\op}^{\ell-2} \\
	&\lesssim 2^{-\ell} \sqrt{\kappa}\, \| q \|_{L^2}^4 \|f\|_{H^{-1}},
\end{align*}
where the implicit constant is independent of $\ell$.  Thus $\gamma\in H^1$ and 
$$
\|\gamma(\kappa;q)\|_{H^1}\lesssim \sqrt \kappa \|q\|_{L^2}^2.
$$

The proofs of \eqref{delta alpha'} and \eqref{delta alpha'2} follow parallel arguments.  In the former case, we pair $\tfrac{\delta \alpha(\kappa;q)}{\delta \bar q}$ with $f'$, which we then rewrite as a trace.  The result then follows by noting the operator identity $f'=-(\kappa-\partial)f-f(\kappa+\partial) +2\kappa f$ and simplifying.

The proof of \eqref{gamma'} follows the same style: one pairs $\gamma(\kappa;q)$ with $f'$ and employs the operator identity $f'=[\kappa+\partial,f]=-[\kappa-\partial,f]$.

The proof of \eqref{deriv} mimics closely that of \eqref{delta alpha bdd}, once one understands how to move the derivatives from the test function $f$ to copies of $q$. Introducing the notation $f_h(x)=f(x-h)$, we observe that by the translation invariance of the trace,
\begin{align*}
&\Bigl\langle f^{(m)}, \bigl(\tfrac{\delta \alpha(\kappa;q)}{\delta \bar q}\bigr)'\Bigr\rangle 
	=  - \tfrac {\partial^m}{\partial h^m}\Big|_{h=0}\Bigl\langle f_h',\tfrac{\delta \alpha(\kappa;q)}{\delta \bar q}\Bigr\rangle
	=  - \tfrac {\partial^m}{\partial h^m}\Big|_{h=0}\Bigl\langle f',\tfrac{\delta }{\delta \bar q} \alpha(\kappa;q_{-h})\Bigr\rangle\\
	&= - \tfrac {\partial^m}{\partial h^m}\Big|_{h=0}\sum_{\ell\geq 0}(i\kappa)^{\ell +1}\tr\left\{ \!\left[ (\kappa-\partial)^{-1}q_{-h} (\kappa+\partial)^{-1}\bar q_{-h} \right]^{\ell} \!\!(\kappa-\partial)^{-1}q_{-h}(\kappa+\partial)^{-1} \bar f'\!\right\}.
\end{align*}
Next, we apply the estimates used to prove \eqref{delta alpha bdd} together with the elementary inequality
$$
\bigl\| q^{(n)}\bigr\|_{L^2} \lesssim \|q\|_{L^2}^{1-\frac nm}\|q\|_{H^m}^{\frac nm} \quad\text{for all $0< n\leq m$}.
$$ 
This yields the estimate \eqref{deriv}.  Note that summability in $\ell$ is guaranteed by \eqref{equi for f}, just as before.

Lastly, we turn to \eqref{weights}.  The argument is very similar; the key ingredient is to move the polynomial weight $\langle x\rangle^{2m}$ from the test function $f$ to a copy of $q$.  This is achieved via the identity
$$
q(\kappa+\partial)^{-1} P \bar f= \sum_{n\geq 0} (-1)^{n}[P^{(n)} q](\kappa+\partial)^{-n-1}\bar f
$$
valid for any polynomial $P(x)$, which follows easily by induction using 
\begin{equation*}
[(\kappa+\partial)^{-1}, P(x)] =  -(\kappa+\partial)^{-1}P'(x)(\kappa+\partial)^{-1}.\qedhere
\end{equation*}
\end{proof}

\begin{lemma} \label{L:alpha commutes} Let $Q\subseteq \Schw$ be $L^2$-bounded and equicontinuous, and let $\eps$ and $\kappa_0$ be as in Lemma~\ref{L:alpha properties}.  Then for all $\kappa, \varkappa \geq \kappa_0$,
\begin{align*}
    \{H,\alpha(\kappa)\}=0, \quad \{M,\alpha(\kappa)\}=0, \quad \text{and}\quad \{\alpha(\varkappa),\alpha(\kappa)\}=0 
\end{align*}
on $Q_\eps$.  Consequently, $A(\kappa)$, $A(\varkappa)$, $M$, $H$, and $H_\kappa$ all Poisson commute on $Q_\eps$.
\end{lemma}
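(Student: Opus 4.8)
The plan is to compute both brackets directly from the definition \eqref{Poisson}, using the three differential identities \eqref{delta alpha'}, \eqref{delta alpha'2}, and \eqref{gamma'} of Lemma~\ref{L:alpha properties}—which were tailored precisely for this purpose—to eliminate every spatial derivative of a functional derivative. For $q\in Q_\eps$ and $\kappa,\varkappa\geq\kappa_0$, all functional derivatives and $\gamma(\kappa;q)$ lie in $H^1$ by \eqref{delta alpha bdd}, so every integral below converges and all integrations by parts are legitimate with no boundary contribution (by periodicity on $\T$ and by $H^1$ decay on $\R$). In particular, the integral of the spatial derivative of any $H^1$ function vanishes.

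For $\{M,\alpha(\kappa)\}$ I would start from $\tfrac{\delta M}{\delta q}=\bar q$ and $\tfrac{\delta M}{\delta \bar q}=q$, so that
$$\{M,\alpha(\kappa)\} = \int \bar q\,\bigl(\tfrac{\delta\alpha}{\delta\bar q}\bigr)' + q\,\bigl(\tfrac{\delta\alpha}{\delta q}\bigr)'\,dx.$$
Substituting \eqref{delta alpha'} and \eqref{delta alpha'2}, the two terms proportional to $i\kappa|q|^2[\gamma+1]$ cancel exactly, leaving $2\kappa\int\bigl(\bar q\,\tfrac{\delta\alpha}{\delta\bar q} - q\,\tfrac{\delta\alpha}{\delta q}\bigr)\,dx$. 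By \eqref{gamma'} the integrand equals $\kappa\,\gamma(\kappa;q)'$, whose integral vanishes; hence $\{M,\alpha(\kappa)\}=0$.

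For the second bracket, write $\phi_\kappa=\tfrac{\delta\alpha(\kappa)}{\delta\bar q}$, $\psi_\kappa=\tfrac{\delta\alpha(\kappa)}{\delta q}$, $g_\kappa=\gamma(\kappa;q)$, and similarly at $\varkappa$. Substituting \eqref{delta alpha'} and \eqref{delta alpha'2} at the parameter $\kappa$ into $\{\alpha(\varkappa),\alpha(\kappa)\}=\int \psi_\varkappa\phi_\kappa' + \phi_\varkappa\psi_\kappa'\,dx$, and then using \eqref{gamma'} at the parameter $\varkappa$ to recognize $\bar q\,\phi_\varkappa - q\,\psi_\varkappa = \tfrac12 g_\varkappa'$, I obtain the exact identity
$$\{\alpha(\varkappa),\alpha(\kappa)\} = 2\kappa\,P + \tfrac{i\kappa}{2}\,G,$$
where $P:=\int(\psi_\varkappa\phi_\kappa - \phi_\varkappa\psi_\kappa)\,dx$ and $G:=\int g_\kappa\,g_\varkappa'\,dx$; the constant $+1$ in $[\gamma+1]$ again drops out because $\int g_\varkappa'\,dx=0$. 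The key observation is that $P$ and $G$ carry all of the symmetric structure, while the prefactors carry the asymmetric dependence on the spectral parameters. Indeed, repeating the computation with $\kappa$ and $\varkappa$ interchanged and using $\psi_\kappa\phi_\varkappa - \phi_\kappa\psi_\varkappa = -(\psi_\varkappa\phi_\kappa - \phi_\varkappa\psi_\kappa)$ (scalar functions commute) together with $\int g_\varkappa g_\kappa'\,dx=-G$ gives $\{\alpha(\kappa),\alpha(\varkappa)\} = -2\varkappa P - \tfrac{i\varkappa}{2}G$. Invoking the antisymmetry $\{\alpha(\varkappa),\alpha(\kappa)\} = -\{\alpha(\kappa),\alpha(\varkappa)\}$ of the bracket \eqref{Poisson} then forces $(\kappa-\varkappa)\bigl(2P + \tfrac{i}{2}G\bigr)=0$, so for $\kappa\neq\varkappa$ we get $P=-\tfrac{i}{4}G$, and feeding this back yields $\{\alpha(\varkappa),\alpha(\kappa)\}=2\kappa(-\tfrac{i}{4}G)+\tfrac{i\kappa}{2}G=0$; the case $\kappa=\varkappa$ is immediate from antisymmetry. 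The Poisson commuting of $A(\kappa)$, $A(\varkappa)$, and $M$ follows, since each is a scalar multiple of $\alpha$ or equal to $M$.

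I expect the only real subtlety to be the bookkeeping: tracking the precise cancellations produced by \eqref{delta alpha'}--\eqref{gamma'} and verifying the sign relations that make $P$ antisymmetric and $G$ sign-reversing under $\kappa\leftrightarrow\varkappa$. The genuinely conceptual point—and what closes the argument without any independent evaluation of $P$ or $G$—is this antisymmetry trick: the abstract skew-symmetry of the bracket, combined with the fact that the spectral parameters appear only in the prefactors, pins both brackets to zero.
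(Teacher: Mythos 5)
Your proof is correct and takes essentially the same route as the paper: the $\{M,\alpha(\kappa)\}$ computation is identical, and your antisymmetry argument for $\{\alpha(\varkappa),\alpha(\kappa)\}$ --- substituting \eqref{delta alpha'}--\eqref{gamma'} at one spectral parameter and comparing with the role-swapped computation --- is precisely the paper's device of evaluating the bracket once directly and once after integrating by parts (integration by parts being exactly what furnishes the antisymmetry you invoke), which yields $\varkappa\,\{\alpha(\kappa),\alpha(\varkappa)\}=\kappa\,\{\alpha(\kappa),\alpha(\varkappa)\}$; your explicit $P$ and $G$ merely make that comparison concrete. One small point of care: on the line, $\int\gamma'\,dx=0$ needs $\gamma'\in L^1$ (membership of $\gamma$ in $H^1$ alone gives only $\gamma'\in L^2$), which \eqref{gamma'} supplies by exhibiting $\gamma'$ as a sum of products of $L^2$ functions --- the same remark the paper makes.
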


\begin{proof}
As discussed in the introduction, the commutativity of $\alpha(\kappa)$ with the Hamiltonian $H$ was proved in \cite{klaus2020priori,tang2020microscopic} whenever the series defining $\alpha(\kappa)$ can be guaranteed to converge.  Such convergence is guaranteed by Lemma~\ref{L:alpha properties}.

Recalling \eqref{Poisson} and employing \eqref{delta alpha'}, \eqref{delta alpha'2}, and \eqref{gamma'}, we find
\begin{align*}
  \{M,\alpha(\kappa)\} = -2\kappa \int \gamma' \,dx = 0.
\end{align*}
Notice that \eqref{gamma'} guarantees $\gamma'\in L^1$.

If $\kappa=\varkappa$ the third equality is clear. When $\kappa\neq \varkappa$, we may proceed to compute the Poisson bracket by applying \eqref{delta alpha'} and \eqref{delta alpha'2} directly to the derivatives of $\alpha(\kappa)$ or by employing integration by parts and then the corresponding formulae for the partial derivatives of $\alpha(\varkappa)$.  Comparing the two approaches yields
\begin{align*}
    \varkappa \{\alpha(\kappa), \alpha(\varkappa)\} &= \kappa \{\alpha(\kappa), \alpha(\varkappa)\} \qtq{and so} \{\alpha(\varkappa),\alpha(\kappa)\}=0. \qedhere
\end{align*}
\end{proof}

\begin{proposition}\label{P:Hk wp} For each $L^2$-equicontinuous set $Q\subseteq \Schw$ satisfying \eqref{M_*}, there exists $\kappa_0\geq 1$ sufficiently large such that for all $\kappa\geq\kappa_0$, the \eqref{Hk} flow is globally well-posed for initial data in $Q$.  Moreover, the solutions remain in $\Schw$ for all time.  Lastly, the set 
$$
Q_*:=\{ e^{tJ\nabla H_\kappa} q: q\in Q, \  t\in \mathbb R, \text{ and } \kappa\geq \kappa_0\}
$$
is bounded and equicontinuous in $L^2$.
\end{proposition}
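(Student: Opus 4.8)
The plan is to treat \eqref{Hk} as an ordinary differential equation in the Hilbert space $L^2$, solve it locally by the Cauchy--Lipschitz (Picard) theorem using the bounds of Lemma~\ref{L:alpha properties} to control the vector field, and then promote the local solutions to global ones by combining the conservation laws of Lemma~\ref{L:alpha commutes} with the equicontinuity mechanism already established in the proof of Theorem~\ref{T:equi}. Throughout, the flow parameter $\kappa$ (the generator) must be kept distinct from the spectral parameters entering the equicontinuity norm $\|\cdot\|_{\mathcal K}$.

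First I would put the vector field into usable form. Writing $V_\kappa(q)$ for the right-hand side of \eqref{Hk} and using that the spatial derivative commutes with complex conjugation, the identities \eqref{delta alpha'} and \eqref{delta alpha'2} replace the differentiated functional derivatives by the undifferentiated quantities $\tfrac{\delta\alpha}{\delta\bar q}$, $\tfrac{\delta\alpha}{\delta q}$, $q$, and $\gamma(\kappa;q)$, thereby exhibiting $V_\kappa$ as an explicit algebraic combination of these objects mapping $Q_\eps$ into $L^2$. The bounds \eqref{delta alpha bdd} and \eqref{delta alpha lipschitz}, the $H^1$ control of $\gamma$ from Lemma~\ref{L:alpha properties}, and the embedding $H^1\hookrightarrow L^\infty$ (used to handle the products $q[\gamma+1]$) show that $V_\kappa$ is bounded and Lipschitz on $Q_\eps$ in the $L^2$ metric, with constants depending only on $Q$ and $\kappa$. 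Picard's theorem then produces, for every $q_0\in Q$, a unique solution $q\in C^1([-\tau,\tau];L^2)$, where the existence time $\tau>0$ may be taken uniform over $q_0\in Q$ because it depends only on $\eps$ and on the size and Lipschitz constants of $V_\kappa$.

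Next I would bring in the conservation laws. Since the algebraic identities of Lemma~\ref{L:alpha properties}, and hence the Poisson-bracket computation of Lemma~\ref{L:alpha commutes}, are valid throughout $Q_\eps$, any local solution remaining in $Q_\eps$ conserves $M(q)$ and $\alpha(\varkappa;q)$ for all $\varkappa\geq\kappa_0$. As $a(\kappa;q)=e^{-\alpha(\kappa;q)}$ extends holomorphically to $\Re\kappa>0$, conservation of $\alpha(\varkappa)$ for $\varkappa\geq\kappa_0$ forces conservation of $a(\kappa;q)$ for \emph{every} $\kappa\geq 1$. At this stage $t\mapsto q(t)$ is an $L^2$-continuous orbit conserving $a(\kappa;\cdot)$, issuing from the equicontinuous set $Q$ with $M<4\pi$; rerunning the final estimate in the proof of Theorem~\ref{T:equi} on this (a priori finite) time interval yields a bound $\|q(t)\|_{\mathcal K}^2\leq\|q_0\|_{\mathcal K}^2+O_\eps(1)=:R^2$ that is uniform in $t$, in $q_0\in Q$, and crucially in the flow parameter $\kappa$, since the constant there depends only on the mass gap $4\pi-\sup_Q M$ and on $\sup_Q M$, not on the generator.

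Finally I would close the argument by a continuity/maximal-interval bootstrap, which is where the genuine difficulty lies. The obstacle is an apparent circularity: $V_\kappa$ is controlled only while the orbit stays in the equicontinuous region $Q_\eps$, yet the equicontinuity keeping it there follows from conservation laws that hold only while the orbit stays in $Q_\eps$. To break this, let $[0,T^*)$ be the maximal interval on which the solution exists and remains in $Q_\eps$. The uniform bound $\|q(t)\|_{\mathcal K}\leq R$ confines the orbit, via Lemma~\ref{L:equi via K}, to a fixed bounded equicontinuous set $\widetilde Q$ on which $M<4\pi$ is conserved; boundedness of $V_\kappa$ then makes $q(t)$ uniformly $L^2$-continuous, so if $T^*<\infty$ the limit $q(T^*)$ exists and lies in the $L^2$-closure of $\widetilde Q$, still equicontinuous and with strict mass bound. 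Applying Lemma~\ref{L:alpha properties} to $\widetilde Q$ furnishes uniform parameters $\eps',\kappa_0'$ and hence a uniform local existence time from every point of $\widetilde Q$, so restarting from $q(T^*)$ extends the solution beyond $T^*$, contradicting maximality. Thus $T^*=\infty$, and the symmetric argument handles negative times. The same uniform bound $\|q(t)\|_{\mathcal K}\leq R$ shows that $Q_*\subseteq\{\,\|q\|_{\mathcal K}\leq R\,\}$, which by Lemma~\ref{L:equi via K} is bounded and equicontinuous in $L^2$, uniformly over $\kappa\geq\kappa_0$, as claimed.
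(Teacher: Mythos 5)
Your overall strategy coincides with the paper's: a contraction-mapping local theory powered by the bounds \eqref{delta alpha bdd}--\eqref{delta alpha lipschitz} of Lemma~\ref{L:alpha properties}, conservation of $M$ and $\alpha(\varkappa)$ from Lemma~\ref{L:alpha commutes} (correctly upgraded to conservation of $a(\kappa;\cdot)$ for \emph{all} $\kappa\geq 1$ by holomorphy in $\Re\kappa>0$, exactly as the paper does at the end of Section~\ref{sec;preliminaries}), and the Theorem~\ref{T:equi} mechanism to produce an a priori equicontinuous region that is preserved by the flow, uniformly in the generator $\kappa$. The paper packages this more efficiently: it introduces at the outset the set $Q_{**}$ of \emph{all} states reachable from $Q$ by any well-posed flow conserving the perturbation determinant -- bounded and equicontinuous directly by Theorem~\ref{T:equi} -- applies Lemma~\ref{L:alpha properties} once to $Q_{**}$, keeps all Picard iterates in its $\eps/2$-neighborhood, and observes that $q(T)\in Q_{**}$ holds by the very definition of $Q_{**}$, so the local argument iterates with no maximal-interval analysis at all.

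Your bootstrap as written, however, does not close. You define $T^*$ as the maximal time for which the solution exists \emph{and remains in $Q_\eps$}, the $\eps$-neighborhood of the original set $Q$; but the solution restarted from $q(T^*)\in\overline{\widetilde Q}$ lives only in $\widetilde Q_{\eps'}$, and the orbit will in general leave $Q_\eps$ long before it loses equicontinuity (nothing confines it near $Q$ itself), so extending existence past $T^*$ does not contradict the maximality you defined. Relatedly, there is a quantifier-ordering problem: your $\kappa_0$ was fixed for $Q_\eps$ before $\widetilde Q$ entered the argument, so for a flow parameter $\kappa\in[\kappa_0,\kappa_0')$ the restart from points of $\widetilde Q$ is simply unavailable, whereas the proposition demands a single $\kappa_0$ depending only on $Q$. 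Both defects are repaired by one move: since $R$ depends only on $Q$, the mass gap, and the choice of $\mathcal K$, the set $\widetilde Q=\{\,\|q\|_{\mathcal K}\leq R \text{ and } M(q)\leq \sup_Q M\,\}\supseteq Q$ is determined \emph{a priori}, before any equation is solved; so base the entire argument on $\widetilde Q$ from the start -- take $\eps$ and $\kappa_0$ from Lemma~\ref{L:alpha properties} applied to $\widetilde Q$, define $T^*$ relative to $\widetilde Q_\eps$, and note that conservation pins the orbit inside $\widetilde Q$ itself, with margin $\eps$ to spare. That corrected invariant-set formulation is precisely the role played by $Q_{**}$ in the paper's proof.
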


\begin{proof}
Recall the set $Q_{**}$ introduced in \eqref{D:Q*}. By \eqref{M equal}, the hypothesis \eqref{M_*}, and the definition of $M_*$, this set is bounded and equicontinuous in $L^2$. We fix $\eps>0$ and $\kappa_0\geq 1$ as the values obtained by applying Lemma~\ref{L:alpha properties} to the set $Q_{**}$.

Next we construct a local solution for initial data $q(0)\in Q_{**}$.  For $\kappa\geq \kappa_0$, Lemma~\ref{L:alpha properties} ensures that one can run the usual contraction mapping argument for the integral equation
\begin{align*}
    q(t)=q(0) +\int_0^t 2\kappa\left( \tfrac{\delta A(\kappa;q(s))}{\delta \bar q}+ \overline{\tfrac{\delta A(\kappa;q(s))}{\delta q}}\right)'\, ds
\end{align*}
to find a unique solution $q\in C([0,T]; L^2)$, provided $T$ is chosen sufficiently small.  In fact, $T$ is chosen so small that $q(t)$ and indeed all Picard iterates remain in the $\eps$-neighborhood of $Q_{**}$.

Combining the estimates \eqref{deriv} and \eqref{weights} with the Gronwall inequality shows that $q(t)\in \Schw$ for all $t\in[0,T]$.  This in turn allows us to apply Lemma~\ref{L:alpha commutes} to conclude that $\alpha(\varkappa;q(t))$ and hence $a(\varkappa;q(t))$ are conserved.  Taken together, these observations guarantee that $q([0,T])\subseteq Q_{**}$ and so the local solutions may be concatenated to yield a global solution lying wholly within $Q_{**}$.
Finally, as $Q_*$ is a subset of $Q_{**}$, it is $L^2$-bounded and equicontinuous.
\end{proof}

Combining Proposition~\ref{P:Hk wp} with Theorem~\ref{T:Hs conservation} immediately yields well-posedness of the \eqref{Hk} flow in the following sense:

\begin{corollary}\label{C:Hk wellposed in Hs}
Fix $0<s<\frac12$ and let $Q\subseteq \Schw$ be $H^s$-bounded and satisfy \eqref{M_*}.  Then there exists $\kappa_0\geq 1$ so that for all $\kappa\geq\kappa_0$ the \eqref{Hk} flow is globally well-posed for initial data in $Q$.  Moreover,
$$
Q_*:=\{ e^{tJ\nabla H_\kappa} q: q\in Q,\  t\in \mathbb R,  \text{ and } \kappa\geq \kappa_0\}\subseteq\Schw \quad\text{is $H^s$-bounded.}
$$
If $Q$ is $H^s$-equicontinuous, then so is $Q_*$.
\end{corollary}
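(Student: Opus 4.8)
The plan is to verify that the hypotheses of Proposition~\ref{P:Hk wp} and Theorem~\ref{T:Hs conservation} are met for each member of the family $\{H_\kappa\}_{\kappa\geq\kappa_0}$ and then to assemble their conclusions, taking care that all thresholds and constants can be chosen uniformly in the flow parameter. First, since $Q\subseteq H^s$ is bounded with $s>0$, it is automatically $L^2$-bounded and equicontinuous, and by hypothesis it satisfies \eqref{mass bound}. Proposition~\ref{P:Hk wp} therefore supplies a threshold $\kappa_0\geq 1$ such that, for every $\kappa\geq\kappa_0$, the $H_\kappa$ flow is globally well-posed in $L^2$ on $Q$, and such that the orbit closures are $L^2$-bounded and equicontinuous uniformly over $\kappa\geq\kappa_0$. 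This uniform $L^2$-equicontinuity is the linchpin: it is what will let me treat the union over $\kappa\geq\kappa_0$ that defines $Q_*$ in the statement.

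Next I would check that each $H_\kappa$ flow is admissible for Theorem~\ref{T:Hs conservation}, i.e.\ a well-posed flow conserving $a(\varkappa;q)$ for all $\varkappa\geq 1$. Lemma~\ref{L:alpha commutes} shows that $\alpha(\varkappa)$ Poisson-commutes with $\alpha(\kappa)$ and $M$, hence with the generator $H_\kappa=4\kappa\Re A(\kappa)$, for all $\varkappa\geq\kappa_0$; thus $a(\varkappa;\cdot)=e^{-\alpha(\varkappa;\cdot)}$ is conserved on that range, and since $a(\varkappa;q)$ is holomorphic in $\Re\varkappa>0$ this conservation propagates to all $\varkappa\geq 1$ by analytic continuation, exactly by the argument recorded after Proposition~\ref{P:large kappa}. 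Moreover $Q$ satisfies \eqref{M_*}: Theorem~\ref{T:equi} gives $M_*\geq 4\pi$, so \eqref{mass bound} yields $\sup_{q\in Q}\|q\|_{L^2}^2<4\pi\leq M_*$. Hence Theorem~\ref{T:Hs conservation} applies to each $H_\kappa$ flow with $\kappa\geq\kappa_0$, producing the a priori bound \eqref{Hs bound} and, when $Q$ is $H^s$-equicontinuous, the $H^s$-equicontinuity of that flow's orbit closure.

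The crucial observation for passing to the union is that the output of Theorem~\ref{T:Hs conservation} is insensitive to which flow was used: the constant in \eqref{Hs bound} depends only on the $L^2$- and $H^s$-bounds of $Q$, while the threshold $\kappa_1$ and the implicit constants in its proof depend only on these bounds together with the $L^2$-equicontinuity of the orbit closure. Since the latter is uniform over $\kappa\geq\kappa_0$ by Proposition~\ref{P:Hk wp}, the $H^s$-bound and, under the equicontinuity hypothesis, the $H^s$-equicontinuity hold uniformly over all $\kappa\geq\kappa_0$ at once; taking the union over $\kappa\geq\kappa_0$ then shows $Q_*$ is $H^s$-bounded, and $H^s$-equicontinuous whenever $Q$ is. To promote the $L^2$ well-posedness of each $H_\kappa$ flow to $H^s$ well-posedness, I would combine the just-proved a priori $H^s$-bound, which gives global persistence of regularity, with the $L^2$-continuous dependence from Proposition~\ref{P:Hk wp}: if $q_n\to q$ in $H^s$, then the corresponding solutions converge in $L^2$, while the bounded convergent family $\{q_n\}$ is $H^s$-equicontinuous, so the associated orbits are uniformly $H^s$-equicontinuous, and $L^2$-convergence together with uniform $H^s$-equicontinuity upgrades to $H^s$-convergence.

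I expect the main obstacle to be precisely this uniformity bookkeeping, namely ensuring that none of the thresholds ($\kappa_0$, $\kappa_1$, $\eta$) or constants secretly depend on the individual parameter $\kappa$, since that is exactly what lets the fixed-$\kappa$ conclusions assemble into a statement about the union defining $Q_*$. A secondary, cosmetic point is that Theorem~\ref{T:Hs conservation} is phrased for $Q\subseteq\Schw$, whereas here $Q\subseteq H^s$; this is resolved by observing that its estimates involve only frequency-localized $L^2$- and $H^s$-norms, so they pass to $H^s$ data by density together with the $L^2$ well-posedness already in hand.
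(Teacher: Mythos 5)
Your proposal is correct and follows exactly the paper's route: the paper proves this corollary in a single sentence by combining Proposition~\ref{P:Hk wp} with Theorem~\ref{T:Hs conservation}, and your argument simply makes explicit the verifications (conservation of $a(\varkappa;q)$ via Lemma~\ref{L:alpha commutes} and analyticity, uniformity of the thresholds in $\kappa$, and the upgrade of $L^2$-convergence to $H^s$-convergence via equicontinuity) that the paper leaves implicit in the word ``immediately.''
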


In order to complete the proof of Theorem~\ref{T:well}, we must prove that $H^s$-Cauchy sequences of initial data $q_n(0)\in\Schw$ satisfying \eqref{M_*} lead to Cauchy sequences of solutions to \eqref{DNLS}.  As mentioned above, this will be effected by showing that the flow
\begin{align} \label{Hk_diff}\tag{$H_\kappa^{\rm{diff}}$}
    \tfrac{d}{dt} q= \left[iq'-|q|^2q- 2\kappa\left( \tfrac{\delta A(\kappa;q)}{\delta \bar q}+ \overline{\tfrac{\delta A(\kappa;q)}{\delta q}}\right)\right]',
\end{align}
generated by $H(q)-H_\kappa(q)$, converges to the identity as $\kappa\to\infty$.  Due to commutativity of the flows, $\Schw$-valued solutions to \eqref{Hk_diff} can be built via
$$
e^{tJ\nabla H_\kappa^{\rm{diff}}}q= e^{tJ\nabla H}  e^{-tJ\nabla H_\kappa} q
$$
using Corollaries~\ref{C:H1} and \ref{C:Hk wellposed in Hs}. In view of Lemma~\ref{L:alpha commutes}, these solutions conserve $M$ and $\alpha(\varkappa)$.

The proof of our final theorem makes a fitting end for this paper by highlighting the power of equicontinuity.  It is also here that we will finally see the origin of the restriction $s\geq \frac16$.  It is needed to make sense of the nonlinearity in \eqref{Hk_diff} pointwise in time.

\begin{theorem} \label{T:DNLS respects limits}
Fix $\tfrac{1}{6} \leq s<\tfrac{1}{2}$ and $T>0$. Given a sequence $q_n(0)\in \Schw$ of initial data that converges in $H^s$ and satisfies \eqref{M_*},  let $q_n(t)$ denote the corresponding solutions to \eqref{DNLS}. Then $q_n(t)$ converges in $H^s$, uniformly for $|t|\leq T$.
\end{theorem}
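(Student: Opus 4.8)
The plan is to factor the \eqref{DNLS} flow as the composition of the regularized flow $e^{tJ\nabla H_\kappa}$ with the difference flow \eqref{Hk_diff}, and to prove that the latter tends to the identity in $H^s$ as $\kappa\to\infty$, uniformly over all orbits in play. First I would note that a convergent sequence in $H^s$ is precompact, so $\{q_n(0)\}$ is $H^s$-bounded and equicontinuous, and in particular $L^2$-bounded and equicontinuous with mass below $4\pi$. Each of the \eqref{DNLS}, $H_\kappa$, and difference flows conserves $a(\kappa;q)$ (the last because $H-H_\kappa$ Poisson commutes with every $\alpha(\varkappa)$) and hence $M$; so Theorems~\ref{T:equi} and~\ref{T:Hs conservation} place every state visited by any of these flows --- for all $n$, all $|t|\le T$, and all $\kappa\ge\kappa_0$ --- inside a single set $\mathcal Q$ that is $H^s$-bounded, $H^s$-equicontinuous, and obeys \eqref{mass bound}. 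Finally, the $H_\varkappa$ Poisson commute with one another by Lemma~\ref{L:alpha commutes}, and letting $\varkappa\to\infty$ (so $H_\varkappa\to H$) gives $\{H,H_\kappa\}=0$; hence the flows commute and
\begin{align*}
q_n(t)=e^{tJ\nabla H_\kappa}\,e^{tJ\nabla(H-H_\kappa)} q_n(0)= e^{tJ\nabla(H-H_\kappa)}\, \bigl[e^{tJ\nabla H_\kappa} q_n(0)\bigr].
\end{align*}

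Set $p_n(t):=e^{tJ\nabla H_\kappa}q_n(0)\in\mathcal Q$ and let
\begin{align*}
D(\kappa):=\sup_{p\in\mathcal Q}\ \sup_{|t|\le T}\ \bigl\| e^{tJ\nabla(H-H_\kappa)} p - p\bigr\|_{H^s};
\end{align*}
the crux of the proof will be that $D(\kappa)\to0$ as $\kappa\to\infty$. Granting this, I decompose
\begin{align*}
q_n(t)-q_m(t) = \bigl[q_n(t)-p_n(t)\bigr] + \bigl[p_n(t)-p_m(t)\bigr] + \bigl[p_m(t)-q_m(t)\bigr].
\end{align*}
By the second factorization above, $q_n(t)=e^{tJ\nabla(H-H_\kappa)}p_n(t)$, so the two outer brackets equal $e^{tJ\nabla(H-H_\kappa)}p_n(t)-p_n(t)$ and its analogue, each bounded by $D(\kappa)$ since $p_n(t),p_m(t)\in\mathcal Q$. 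The middle bracket, by contrast, involves only the \emph{single, fixed} flow $e^{tJ\nabla H_\kappa}$: given $\delta>0$ I first choose $\kappa=\kappa_1$ with $D(\kappa_1)<\delta/3$, and only then invoke continuity of $e^{tJ\nabla H_{\kappa_1}}$ in $H^s$ --- obtained from the $L^2$-Lipschitz bound of Proposition~\ref{P:Hk wp} together with the $H^s$-equicontinuity of the orbit (Corollary~\ref{C:Hk wellposed in Hs}) --- to force $\|p_n(t)-p_m(t)\|_{H^s}<\delta/3$ for $|t|\le T$ once $n,m$ are large. The merit of this ordering is that no continuity of $e^{tJ\nabla H_\kappa}$ uniform in $\kappa$ is ever needed.

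To prove $D(\kappa)\to0$, note that along each trajectory the displacement is at most $\int_{-T}^{T}\|\partial_x\mathcal R_\kappa(q)\|_{H^s}\,dt$, where
\begin{align*}
\mathcal R_\kappa(q)= iq'-|q|^2q - 2\kappa\Bigl(\tfrac{\delta A(\kappa;q)}{\delta\bar q}+\overline{\tfrac{\delta A(\kappa;q)}{\delta q}}\Bigr)
\end{align*}
is the vector field of \eqref{Hk_diff} with its outer derivative removed. The first move is to apply the differentiation identities \eqref{delta alpha'}, \eqref{delta alpha'2}, and \eqref{gamma'}, which rewrite $\partial_x\tfrac{\delta A}{\delta\bar q}$ and $\partial_x\overline{\tfrac{\delta A}{\delta q}}$ as \emph{algebraic} (derivative-free) expressions in $q$ and $\gamma$; this is what prevents a loss of derivatives and makes $\partial_x\mathcal R_\kappa$ a candidate element of $H^s$. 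Expanding the series \eqref{alpha defn} order by order, the quadratic part cancels $iq''$ up to an error carrying the smoothing symbol $\xi^2/(4\kappa^2+\xi^2)$, the quartic part cancels $(|q|^2q)'$ up to a comparable error (the paraproduct form of Lemma~\ref{L:sub dominant} is convenient here), and the contributions of order $\ell\ge3$ are dominated, via \eqref{geometric decay} and Lemmas~\ref{L:op est} and~\ref{L:H^s estimates}, by $\|\Lambda\|_{\op}^{2(\ell-2)}\lesssim\kappa^{-(\ell-2)}$, hence are summably small. Since $\mathcal Q$ is $H^s$-equicontinuous, the smoothing symbols drive the remaining errors to $0$ as $\kappa\to\infty$, uniformly over $\mathcal Q$.

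The main obstacle is precisely this last estimate, and it is here that the hypothesis $s\ge\frac16$ is consumed. Even after the linear and cubic cancellations, the surviving genuinely nonlinear part of $\partial_x\mathcal R_\kappa$ is a cubic-type expression whose outer derivative cannot be placed on any single factor; controlling it in $H^s$ amounts to distributing $s$ derivatives across three frequencies and balancing them against the $\kappa$-smoothing furnished by equicontinuity, and this balance closes exactly when $s\ge\frac16$. Below that threshold the nonlinearity of \eqref{Hk_diff} cannot be made sense of pointwise in time. The remaining ingredients --- precompactness, the factorization, and the telescoping of the previous paragraph --- are then routine, and the uniform $H^s$ convergence of $q_n(t)$ follows.
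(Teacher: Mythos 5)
Your architecture---factoring $q_n(t)=e^{tJ\nabla H_\kappa^{\rm{diff}}}e^{tJ\nabla H_\kappa}q_n(0)$, fixing a single large $\kappa$ before sending $n,m\to\infty$, and using conservation of $a(\varkappa;q)$ together with Theorems~\ref{T:equi} and~\ref{T:Hs conservation} to confine all states visited by all three flows to one $H^s$-bounded and $H^s$-equicontinuous set---is exactly the paper's. The genuine gap is in your crux step, the proof that $D(\kappa)\to0$: you bound the displacement of the difference flow by $\int_{-T}^{T}\|\partial_x\mathcal R_\kappa(q)\|_{H^s}\,dt$ and claim the identities \eqref{delta alpha'}--\eqref{gamma'} make the integrand ``a candidate element of $H^s$'' with errors carrying the bounded symbol $\xi^2/(4\kappa^2+\xi^2)$. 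The symbol count is off: after the quadratic cancellation, the linear-in-$q$ part of your $\mathcal R_\kappa$ is $-i\partial^3(4\kappa^2-\partial^2)^{-1}q$, so with the outer derivative the relevant symbol is $\xi^4/(4\kappa^2+\xi^2)$, which \emph{grows} like $\xi^2$ for $|\xi|\gg\kappa$. On a set that is merely $H^s$-bounded, this term admits no uniform $H^s$ bound whatsoever---equicontinuity furnishes uniform smallness of $\|q_{>N}\|_{H^s}$, not control of two extra derivatives---so $D(\kappa)$ computed your way is infinite, not small. The cubic part fails in $H^s$ as well: for $s<\tfrac12$, three $H^s$ factors yield only an $L^1$-type product (via $H^{1/6}\hookrightarrow L^3$), which cannot be placed in any positive Sobolev norm.

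The missing idea is the paper's weak-norm downgrade. Because the set $Q_{**}$ of all states reached by the difference flow is $H^s$-equicontinuous, convergence of $e^{tJ\nabla H_\kappa^{\rm{diff}}}$ to the identity in $H^s$ follows from convergence in the much weaker norm $H^{-4}$: one compares low frequencies crudely, $\|(u-v)_{\leq N}\|_{H^s}\lesssim N^{s+4}\|u-v\|_{H^{-4}}$, and absorbs the high frequencies into the uniformly small $H^s$ tails. The fundamental theorem of calculus then reduces the whole matter to $\sup_{q\in Q_{**}}\|F\|_{H^{-3}}\to0$, where $F$ is the vector field \emph{without} the outer derivative; in the dual pairing against $f\in H^3$, every symbol above becomes bounded with a gain of $\kappa^{-1}$ (cf.\ \eqref{fest}), the cubic term closes via the $L^3$ bounds \eqref{quest} at $s\geq\tfrac16$, and the quintic-and-higher tail is summed using the refined bound $\|\Lambda(q)\|_{\op}\lesssim\kappa^{-\frac12-s}$, with $H^s$-equicontinuity rescuing the endpoint $s=\tfrac16$. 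Your instinct that equicontinuity ``drives the smoothing symbols to zero'' is correct, but it can only be deployed \emph{after} the estimate has been demoted to a negative norm; applied at the $H^s$ level as you propose, the estimate is simply false. (Your remaining steps---the telescoping with a fixed $\kappa_1$, and continuity of the single flow $e^{tJ\nabla H_{\kappa_1}}$---are sound and match the paper.)
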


\begin{proof}
By hypothesis, the set $Q=\{q_n(0): n\in\mathbb N\}$ is bounded and equicontinuous in the $H^s$-metric.  Let $\kappa_0\geq 1$ be as given by Corollary~\ref{C:Hk wellposed in Hs}.  Then for $\kappa\geq\kappa_0$, the \eqref{Hk} flow is well-posed for initial data in $Q$ and the set 
$$
Q_*:=\{e^{tJ\nabla H_\kappa} q_n(0): n\in\mathbb N,\  t\in\mathbb R, \text{ and } \kappa\geq \kappa_0 \}\subseteq\Schw
$$ 
is bounded and equicontinuous in $H^s$.

The commutativity of the \eqref{Hk} and the \eqref{DNLS} flows allows us to rewrite our sequence of solutions as
\begin{align*}
    q_n(t)=e^{tJ\nabla H_\kappa^{\rm{diff}}} e^{tJ\nabla H_\kappa} q_n(0).
\end{align*}
Moreover, by Theorem~\ref{T:Hs conservation}, the set
\begin{align*}
        \{e^{tJ\nabla H_\kappa^{\rm{diff}}} q: q\in Q_*,\  t\in\mathbb R, \text{ and } \kappa\geq \kappa_0 \}\subseteq Q_{**}
\end{align*}
is bounded and equicontinuous in $H^s$. 

We will show that $q_n(t)$ forms a Cauchy sequence in $H^s$, uniformly for $|t|\leq T$. By the definition of $Q_*$, we estimate
\begin{align*}
    \sup_{|t|\leq T}\|q_n(t)-q_m(t)\|_{H^s}\leq &  2  \sup_{q\in Q_*}\sup_{|t|\leq T}\|e^{tJ\nabla H_\kappa^{\rm{diff}}} q-q\|_{H^s}\\
    &+ \sup_{|t|\leq T}\|e^{tJ\nabla H_\kappa} q_n(0) - e^{tJ\nabla H_\kappa} q_m(0)\|_{H^s}
\end{align*}
for all $\kappa\geq \kappa_0$. For any such fixed $\kappa$, the well-posedness of the \ref{Hk} flow ensures that the last term of the right-hand side converges to $0$ as $n, m\to\infty$.  Thus, it suffices to prove that the difference flow converges to the identity uniformly on $Q_*$:
\begin{align*}
    \lim_{\kappa\to\infty}\sup_{q\in Q_*}\sup_{|t|\leq T}\|e^{tJ\nabla H_\kappa^{\rm{diff}}} q-q\|_{H^s}=0.
\end{align*}
In fact, as $Q_{**}$ is $H^s$-equicontinuous, it suffices to show that
\begin{align}\label{diff converges to id}
    \lim_{\kappa\to\infty}\sup_{q\in  Q_*}\sup_{|t|\leq T}\|e^{tJ\nabla H_\kappa^{\rm{diff}}} q-q\|_{H^{-4}}=0.
\end{align}

By the fundamental theorem of calculus and \eqref{Hk_diff}, proving \eqref{diff converges to id} reduces to showing that
\begin{align}\label{diff converges to id'}
\lim_{\kappa\to\infty}\sup_{q\in  Q_{**}} \| F \|_{H^{-3}}=0
\qtq{where} 
F := iq'-|q|^2q- 2\kappa\left( \tfrac{\delta A(\kappa;q)}{\delta \bar q}+ \overline{\tfrac{\delta A(\kappa;q)}{\delta q}}\right)  .\!\!
\end{align}
A straightforward computation shows that $F^{[1]}$, the term in $F$ that is linear in $q$, is given by $-i\tfrac{\partial^3}{4\kappa^2- \partial^2}q$.  This clearly converges to zero in $H^{-3}$ as $\kappa\to \infty$, uniformly on $Q_{**}$, or indeed, on any $L^2$-bounded set.

We turn now to the contribution of $F^{[3]}$, the term in $F$ that is cubic in $q$.  Employing Lemma~\ref{L:sub dominant}, we find the cubic terms \begin{align*}
\left( \tfrac{\delta A(\kappa;q)}{\delta \bar q} \right)^{[3]}  &=-  \tfrac{\kappa^2}{2\kappa-\partial} \left\{\left(\tfrac{1}{2\kappa+\partial} \bar q \right)(4\kappa-\partial)\left(\tfrac{1}{2\kappa-\partial} q\right)^{2}\right\}\\
    &=-  \tfrac{2\kappa^2}{2\kappa-\partial} \left\{q\left(\tfrac{1}{2\kappa-\partial} q\right) \left(\tfrac{1}{2\kappa+\partial} \bar q \right)\right\},\\
\left( \overline{\tfrac{\delta A(\kappa;q)}{\delta q}} \right)^{[3]}  &=-  \tfrac{\kappa^2}{2\kappa+\partial} \left\{ \left(\tfrac{1}{2\kappa-\partial} \bar q \right)(4\kappa+\partial)\left(\tfrac{1}{2\kappa+\partial} q\right)^2\right\}\\
     &=-  \tfrac{2\kappa^2}{2\kappa+\partial} \left\{q\left(\tfrac{1}{2\kappa+\partial} q\right) \left(\tfrac{1}{2\kappa-\partial} \bar q \right)\right\}.
\end{align*}
This allows us to compute the full cubic term as follows:
\begin{align*}
F^{[3]}
    &=2\kappa^2 \tfrac{\partial}{2\kappa-\partial} \left[ q\left(\tfrac{1}{2\kappa-\partial} q\right) \left(\tfrac{1}{2\kappa+\partial} \bar q \right)\right] -2\kappa^2 \tfrac{\partial}{2\kappa+\partial} \left[ q\left(\tfrac{1}{2\kappa+\partial} q\right) \left(\tfrac{1}{2\kappa-\partial} \bar q \right)\right]\\
    & \qquad +2\kappa^2  q\left(\tfrac{1}{2\kappa-\partial} q\right) \left(\tfrac{1}{2\kappa+\partial} \bar q \right) +2\kappa^2  q\left(\tfrac{1}{2\kappa+\partial} q\right) \left(\tfrac{1}{2\kappa-\partial} \bar q \right)-q^2 \bar q \\
    & =\tfrac{2\partial}{2\kappa-\partial} \left[ q\left(\tfrac{\kappa}{2\kappa-\partial} q\right) \left(\tfrac{\kappa}{2\kappa+\partial} \bar q \right)\right] -\tfrac{2\partial}{2\kappa+\partial} \left[ q\left(\tfrac{\kappa}{2\kappa+\partial} q\right) \left(\tfrac{\kappa}{2\kappa-\partial} \bar q \right)\right]
    +q^2 \left(\tfrac{\partial^2} {4\kappa^2-\partial^2} \bar q \right) \\
    &\qquad +  q \bar q\left(\tfrac{\partial^2} {4\kappa^2-\partial^2} q\right)- \tfrac{1}{2}q \left(\tfrac{\partial}{2\kappa-\partial} q\right) \left(\tfrac{\partial}{2\kappa+\partial} \bar q\right) -\tfrac12 q \left(\tfrac{\partial}{2\kappa+\partial} q\right) \left(\tfrac{\partial}{2\kappa-\partial} \bar q\right).
\end{align*}

To estimate its contribution, we pair with $f\in H^3$ and apply H\"older's inequality.  Boundedness is easily deduced from
\begin{gather}
\| f\|_{L^\infty} + \kappa \bigl\| \tfrac{\partial}{2\kappa\pm\partial} f \bigr\|_{L^\infty} \lesssim \| f\|_{H^3}, \label{fest}\\
\| q \|_{L^3} + \bigl\| \tfrac{\partial}{2\kappa\pm\partial} q \bigr\|_{L^3} + \bigl\| \tfrac{\partial^2}{4\kappa^2-\partial^2} q \bigr\|_{L^3}
	\lesssim \|q\|_{H^s}. \label{quest} 
\end{gather}
Evidently \eqref{quest} requires $s\geq \frac16$.  The gain of a power of $\kappa$ in \eqref{fest} guarantees that the contribution of the first two terms in $F^{[3]}$ decays to zero as $\kappa\to\infty$.  For the remaining terms, we use $H^s$-equicontinuity to obtain decay: as $s\geq \frac16$, we have that
$$
\lim_{\kappa\to\infty}\sup_{q\in Q_{**}} \bigl\|\tfrac{\partial}{2\kappa\pm\partial}q\bigr\|_{L^3}
\lesssim \lim_{\kappa\to\infty}\sup_{q\in Q_{**}} \bigl\|\tfrac{\partial}{2\kappa\pm\partial}q\bigr\|_{H^s}=0.
$$

Finally, we turn our attention to the remaining terms (quintic and higher) in the series expansion of $F$.  By Lemma~\ref{L:HS}, \eqref{alpha defn}, and the embedding $H^3\hookrightarrow L^\infty$, 
\begin{align*}
\biggl|  \int f F^{[\geq 5]} \,dx \biggr| &\lesssim \sum_{\ell\geq 2}\kappa^{\ell +2} \|\Lambda(q)\|_{\hs}^2 \|\Lambda(q)\|_{\op}^{2\ell-1} \|\Lambda(f)\|_{\op}\\
    &\lesssim \|q\|_{L^2}^2\|f\|_{H^3}\sum_{\ell\geq 2}\kappa^{\ell} \|\Lambda(q)\|_{\op}^{2\ell-1}.
\end{align*}
The convergence we require does not follow from Proposition~\ref{P:large kappa}; we would lose by a factor of $\sqrt{\kappa}$. However arguing in the same fashion, we find
\begin{align*}
\|\Lambda(q)\|_{\op}\lesssim \|\Lambda(q_{\leq\eta\kappa})\|_{\op}+ \|\Lambda(q_{>\eta\kappa})\|_{\op}&\lesssim \kappa^{-1}\|q_{\leq \eta\kappa}\|_{L^\infty}+\kappa^{-\frac 12} \|q_{>\eta\kappa}\|_{L^2}\\
    &\lesssim \kappa^{-\frac12-s}\left(\eta^{\frac 12-s} \|q\|_{H^s} +\eta^{-s} \|q_{>\eta\kappa}\|_{H^s}\right),
\end{align*}
for any $\eta>0$.  When $s>\frac16$, we may simply take $\eta=1$ to deduce that
$$
\lim_{\kappa\to\infty}\sup_{q\in  Q_{**}} \bigl\| F^{[5]} \bigr\|_{H^{-3}}=0.
$$
For the endpoint case $s=\frac16$, this follows from the $H^s$-equicontinuity of $Q_{**}$ by choosing $\eta$ small and then $\kappa$ large.
This completes the proof of the theorem.
\end{proof}

\bibliography {bibliography}

\providecommand{\bysame}{\leavevmode\hbox to3em{\hrulefill}\thinspace}
\providecommand{\MR}{\relax\ifhmode\unskip\space\fi MR }
\providecommand{\MRhref}[2]{%
  \href{http://www.ams.org/mathscinet-getitem?mr=#1}{#2}
}
\providecommand{\href}[2]{#2}
\begin{thebibliography}{10}

\bibitem{MR0997295}
V.~I. Arnold, \emph{Mathematical methods of classical mechanics}, second ed.,
  Graduate Texts in Mathematics, vol.~60, Springer-Verlag, New York, 1989,
  Translated from the Russian by K. Vogtmann and A. Weinstein. \MR{997295}

\bibitem{bahouri2020global}
Hajer Bahouri and Galina Perelman, \emph{Global well-posedness for the
  derivative nonlinear {S}chr\"odinger equation}, Preprint
  \texttt{arXiv:2012.01923}, 2020.

\bibitem{MR1837253}
H.~A. Biagioni and F.~Linares, \emph{Ill-posedness for the derivative
  {S}chr\"{o}dinger and generalized {B}enjamin-{O}no equations}, Trans. Amer.
  Math. Soc. \textbf{353} (2001), no.~9, 3649--3659. \MR{1837253}

\bibitem{bringmann2019global}
Bjoern Bringmann, Rowan Killip, and Monica Visan, \emph{Global well-posedness
  for the fifth-order {K}d{V} equation in ${H}^{-1}(\mathbb{R})$}, Preprint
  \texttt{arXiv:1912.01536}, 2019.

\bibitem{MR1871414}
J.~Colliander, M.~Keel, G.~Staffilani, H.~Takaoka, and T.~Tao, \emph{Global
  well-posedness for {S}chr\"{o}dinger equations with derivative}, SIAM J.
  Math. Anal. \textbf{33} (2001), no.~3, 649--669. \MR{1871414}

\bibitem{MR1950826}
\bysame, \emph{A refined global well-posedness result for {S}chr\"{o}dinger
  equations with derivative}, SIAM J. Math. Anal. \textbf{34} (2002), no.~1,
  64--86. \MR{1950826}

\bibitem{MR473576}
Michael Cwikel, \emph{Weak type estimates for singular values and the number of
  bound states of {S}chr\"{o}dinger operators}, Ann. of Math. (2) \textbf{106}
  (1977), no.~1, 93--100. \MR{473576}

\bibitem{Deng2}
Yu~Deng, Andrea~R. Nahmod, and Haitian Yue, \emph{Optimal local well-posedness
  for the periodic derivative nonlinear {S}chr\"odinger equation}, Preprint
  \texttt{arXiv:1905.04352}, 2019.

\bibitem{MR4079021}
Kazumasa Fujiwara, Vladimir Georgiev, and Tohru Ozawa, \emph{Self-similar
  solutions to the derivative nonlinear {S}chr\"{o}dinger equation}, J.
  Differential Equations \textbf{268} (2020), no.~12, 7940--7961. \MR{4079021}

\bibitem{MR3668587}
Noriyoshi Fukaya, Masayuki Hayashi, and Takahisa Inui, \emph{A sufficient
  condition for global existence of solutions to a generalized derivative
  nonlinear {S}chr\"{o}dinger equation}, Anal. PDE \textbf{10} (2017), no.~5,
  1149--1167. \MR{3668587}

\bibitem{MR2181058}
Axel Gr\"{u}nrock, \emph{Bi- and trilinear {S}chr\"{o}dinger estimates in one
  space dimension with applications to cubic {NLS} and {DNLS}}, Int. Math. Res.
  Not. (2005), no.~41, 2525--2558. \MR{2181058}

\bibitem{MR2390318}
Axel Gr\"{u}nrock and Sebastian Herr, \emph{Low regularity local well-posedness
  of the derivative nonlinear {S}chr\"{o}dinger equation with periodic initial
  data}, SIAM J. Math. Anal. \textbf{39} (2008), no.~6, 1890--1920.
  \MR{2390318}

\bibitem{MR3583477}
Zihua Guo and Yifei Wu, \emph{Global well-posedness for the derivative
  nonlinear {S}chr\"{o}dinger equation in {$H^{\frac{1}{2}}(\mathbb{R})$}},
  Discrete Contin. Dyn. Syst. \textbf{37} (2017), no.~1, 257--264. \MR{3583477}

\bibitem{HGKV}
Benjamin Harrop-Griffiths, Rowan Killip, and Monica Vi\c{s}an, \emph{Sharp
  well-posedness for the cubic {NLS} and {mKdV} in {$H^s (\mathbb R)$}},
  Preprint \texttt{arXiv:2003.05011}.

\bibitem{MR1152001}
Nakao Hayashi and Tohru Ozawa, \emph{On the derivative nonlinear
  {S}chr\"{o}dinger equation}, Phys. D \textbf{55} (1992), no.~1-2, 14--36.
  \MR{1152001}

\bibitem{MR2219223}
Sebastian Herr, \emph{On the {C}auchy problem for the derivative nonlinear
  {S}chr\"{o}dinger equation with periodic boundary condition}, Int. Math. Res.
  Not. (2006), Art. ID 96763, 33. \MR{2219223}

\bibitem{IchikawaWatanabe}
Y.~H. Ichikawa and S.~Watanabe, \emph{Solitons, envelope solitons in
  collisionless plasmas}, Third international congress: waves and instabilities
  in plasmas, vol.~38, J. Phys. Colloques, no.~C6, 1977, pp.~15--26.

\bibitem{isom2020growth}
Bradley Isom, Dionyssios Mantzavinos, and Atanas Stefanov, \emph{Growth bound
  and nonlinear smoothing for the periodic derivative nonlinear {S}chr\"odinger
  equation}, Preprint \texttt{arXiv:2012.09933}, 2020.

\bibitem{MR3913998}
Robert Jenkins, Jiaqi Liu, Peter Perry, and Catherine Sulem, \emph{Global
  well-posedness for the derivative non-linear {S}chr\"{o}dinger equation},
  Comm. Partial Differential Equations \textbf{43} (2018), no.~8, 1151--1195.
  \MR{3913998}

\bibitem{MR3858827}
\bysame, \emph{Soliton resolution for the derivative nonlinear
  {S}chr\"{o}dinger equation}, Comm. Math. Phys. \textbf{363} (2018), no.~3,
  1003--1049. \MR{3858827}

\bibitem{MR4042219}
\bysame, \emph{The derivative nonlinear {S}chr\"{o}dinger equation: global
  well-posedness and soliton resolution}, Quart. Appl. Math. \textbf{78}
  (2020), no.~1, 33--73. \MR{4042219}

\bibitem{MR4149070}
\bysame, \emph{Global existence for the derivative nonlinear {S}chr\"{o}dinger
  equation with arbitrary spectral singularities}, Anal. PDE \textbf{13}
  (2020), no.~5, 1539--1578. \MR{4149070}

\bibitem{MR464963}
David~J. Kaup and Alan~C. Newell, \emph{An exact solution for a derivative
  nonlinear {S}chr\"{o}dinger equation}, J. Mathematical Phys. \textbf{19}
  (1978), no.~4, 798--801. \MR{464963}

\bibitem{MR4145790}
Rowan Killip, Jason Murphy, and Monica Visan, \emph{Invariance of white noise
  for {K}d{V} on the line}, Invent. Math. \textbf{222} (2020), no.~1, 203--282.
  \MR{4145790}

\bibitem{KV18}
Rowan Killip and Monica Vi\c{s}an, \emph{Kd{V} is well-posed in {$H^{-1}$}},
  Ann. of Math. (2) \textbf{190} (2019), no.~1, 249--305. \MR{3990604}

\bibitem{KVZ17}
Rowan Killip, Monica Vi\c{s}an, and Xiaoyi Zhang, \emph{Low regularity
  conservation laws for integrable {PDE}}, Geom. Funct. Anal. \textbf{28}
  (2018), no.~4, 1062--1090. \MR{3820439}

\bibitem{MR818186}
A.~V. Kitaev, \emph{Self-similar solutions of a modified nonlinear
  {S}chr\"{o}dinger equation}, Teoret. Mat. Fiz. \textbf{64} (1985), no.~3,
  347--369. \MR{818186}

\bibitem{klaus2020priori}
Friedrich Klaus and Robert Schippa, \emph{A priori estimates for the derivative
  nonlinear {S}chr\"{o}dinger equation}, Preprint \texttt{arXiv:2007.13161},
  2020.

\bibitem{MR3706093}
Jiaqi Liu, \emph{Global {W}ell-posedness for the {D}erivative {N}onlinear
  {S}chrodinger {E}quation {T}hrough {I}nverse {S}cattering}, ProQuest LLC, Ann
  Arbor, MI, 2017, Thesis (Ph.D.)--University of Kentucky. \MR{3706093}

\bibitem{MR3563476}
Jiaqi Liu, Peter~A. Perry, and Catherine Sulem, \emph{Global existence for the
  derivative nonlinear {S}chr\"{o}dinger equation by the method of inverse
  scattering}, Comm. Partial Differential Equations \textbf{41} (2016), no.~11,
  1692--1760. \MR{3563476}

\bibitem{MR3739932}
\bysame, \emph{Long-time behavior of solutions to the derivative nonlinear
  {S}chr\"{o}dinger equation for soliton-free initial data}, Ann. Inst. H.
  Poincar\'{e} Anal. Non Lin\'{e}aire \textbf{35} (2018), no.~1, 217--265.
  \MR{3739932}

\bibitem{MR2823664}
Changxing Miao, Yifei Wu, and Guixiang Xu, \emph{Global well-posedness for
  {S}chr\"{o}dinger equation with derivative in {$H^{\frac12}(\mathbb R)$}}, J.
  Differential Equations \textbf{251} (2011), no.~8, 2164--2195. \MR{2823664}

\bibitem{MOMT}
Koji Mio, Tatsuki Ogino, Kazuo Minami, and Susumu Takeda, \emph{Modulational
  instability and envelope-solitons for nonlinear alfv\'en waves propagating
  along the magnetic field in plasmas}, J. Phys. Soc. Japan \textbf{41} (1976),
  no.~2, 667--673.

\bibitem{mjolhus_1976}
Einar Mj{\o}lhus, \emph{On the modulational instability of hydromagnetic waves
  parallel to the magnetic field}, Journal of Plasma Physics \textbf{16}
  (1976), no.~3, 321--334.

\bibitem{MR3680936}
Razvan Mosincat, \emph{Global well-posedness of the derivative nonlinear
  {S}chr\"{o}dinger equation with periodic boundary condition in {$H^{\frac
  12}$}}, J. Differential Equations \textbf{263} (2017), no.~8, 4658--4722.
  \MR{3680936}

\bibitem{MR3377682}
Razvan Mosincat and Tadahiro Oh, \emph{A remark on global well-posedness of the
  derivative nonlinear {S}chr\"{o}dinger equation on the circle}, C. R. Math.
  Acad. Sci. Paris \textbf{353} (2015), no.~9, 837--841. \MR{3377682}

\bibitem{MR2928851}
Andrea~R. Nahmod, Tadahiro Oh, Luc Rey-Bellet, and Gigliola Staffilani,
  \emph{Invariant weighted {W}iener measures and almost sure global
  well-posedness for the periodic derivative {NLS}}, J. Eur. Math. Soc. (JEMS)
  \textbf{14} (2012), no.~4, 1275--1330. \MR{2928851}

\bibitem{ntekoume2019symplectic}
Maria Ntekoume, \emph{Symplectic non-squeezing for the {KdV} flow on the line},
  Preprint \texttt{arXiv:1911.11355}, 2019.

\bibitem{MR3702542}
Dmitry~E. Pelinovsky, Aaron Saalmann, and Yusuke Shimabukuro, \emph{The
  derivative {NLS} equation: global existence with solitons}, Dyn. Partial
  Differ. Equ. \textbf{14} (2017), no.~3, 271--294. \MR{3702542}

\bibitem{MR3862117}
Dmitry~E. Pelinovsky and Yusuke Shimabukuro, \emph{Existence of global
  solutions to the derivative {NLS} equation with the inverse scattering
  transform method}, Int. Math. Res. Not. IMRN (2018), no.~18, 5663--5728.
  \MR{3862117}

\bibitem{saalmann2017global}
Aaron Saalmann, \emph{Global existence for the derivative {NLS} equation in the
  presence of solitons}, Preprint \texttt{arXiv:1704.00071}, 2017.

\bibitem{MR2154153}
Barry Simon, \emph{Trace ideals and their applications}, second ed.,
  Mathematical Surveys and Monographs, vol. 120, American Mathematical Society,
  Providence, RI, 2005. \MR{2154153}

\bibitem{MR1693278}
Hideo Takaoka, \emph{Well-posedness for the one-dimensional nonlinear
  {S}chr\"{o}dinger equation with the derivative nonlinearity}, Adv.
  Differential Equations \textbf{4} (1999), no.~4, 561--580. \MR{1693278}

\bibitem{MR1836810}
\bysame, \emph{Global well-posedness for {S}chr\"{o}dinger equations with
  derivative in a nonlinear term and data in low-order {S}obolev spaces},
  Electron. J. Differential Equations (2001), No. 42, 23. \MR{1836810}

\bibitem{tang2020microscopic}
Xingdong Tang and Guixiang Xu, \emph{Microscopic conservation laws for the
  derivative nonlinear {S}chr\"{o}dinger equation}, Preprint
  \texttt{arXiv:2012.04805}, 2020.

\bibitem{MR621533}
Masayoshi Tsutsumi and Isamu Fukuda, \emph{On solutions of the derivative
  nonlinear {S}chr\"{o}dinger equation. {E}xistence and uniqueness theorem},
  Funkcial. Ekvac. \textbf{23} (1980), no.~3, 259--277. \MR{621533}

\bibitem{MR634894}
\bysame, \emph{On solutions of the derivative nonlinear {S}chr\"{o}dinger
  equation. {II}}, Funkcial. Ekvac. \textbf{24} (1981), no.~1, 85--94.
  \MR{634894}

\bibitem{MR2668514}
Yin Yin~Su Win, \emph{Global well-posedness of the derivative nonlinear
  {S}chr\"{o}dinger equations on {$T$}}, Funkcial. Ekvac. \textbf{53} (2010),
  no.~1, 51--88. \MR{2668514}

\bibitem{MR3198590}
Yifei Wu, \emph{Global well-posedness for the nonlinear {S}chr\"{o}dinger
  equation with derivative in energy space}, Anal. PDE \textbf{6} (2013),
  no.~8, 1989--2002. \MR{3198590}

\bibitem{MR3393674}
\bysame, \emph{Global well-posedness on the derivative nonlinear
  {S}chr\"{o}dinger equation}, Anal. PDE \textbf{8} (2015), no.~5, 1101--1112.
  \MR{3393674}

\end{thebibliography}
\bibliographystyle{amsplain}

\end{document}